\newtheorem{thm}{Theorem}[section]
\newtheorem{lemma}[thm]{Lemma}
\newtheorem{prop}[thm]{Proposition}
\newtheorem{cor}[thm]{Corollary}
\newtheorem{thmx}{Theorem}
\newtheorem{propx}{Proposition}
\theoremstyle{remark}
\newtheorem{rmk}[thm]{Remark}
\theoremstyle{definition}
\newcommand{\bg}{\bar{g}}
\newcommand{\bx}{\bar{x}}
\newcommand{\hh}{\hat{h}}
\newcommand{\ox}{\overline{X}}
\newcommand{\ola}{\overline{\Lambda}}
\newcommand{\oth}{\overline{\Theta}}
\newcommand{\wtf}{\widetilde{F}}
\newcommand{\wtg}{\widetilde{G}}
\newcommand{\wtu}{\widetilde{U}}
\newcommand{\wtv}{\widetilde{V}}
\newcommand{\wtla}{\widetilde{\Lambda}}
\newcommand{\wtth}{\widetilde{\Theta}}
\newcommand{\ga}{\gamma}
\newcommand{\ep}{\epsilon}
\newcommand{\vep}{\varepsilon}
\newcommand{\ls}{\lambda, \sigma}
\newcommand{\pa}{\partial}
\newcommand{\mh}{\mathbb{H}}
\newcommand{\mn}{\mathbb{N}}
\newcommand{\mr}{\mathbb{R}}
\newcommand{\mcc}{\mathcal{C}}
\newcommand{\mcd}{\mathcal{D}}
\newcommand{\mck}{\mathcal{K}}
\newcommand{\mcm}{\mathcal{M}}
\newcommand{\tni}{\textnormal{II}}
\newcommand{\hx}{H^{1,2}(X; \rho^{1-2\ga})}
\newcommand{\dhmr}{\dot{H}^{1,2}(\mr^N_+; x_N^{1-2\ga})}
\renewcommand{\(}{\left(}
\renewcommand{\)}{\right)}
\begin{document}
\title[Conformal metrics with prescribed fractional scalar curvature]{Conformal metrics with prescribed fractional\\
scalar curvature on conformal infinities\\
with positive fractional Yamabe constants}

\author{Seunghyeok Kim}
\address[Seunghyeok Kim]{Department of Mathematics and Research Institute for Natural Sciences, College of Natural Sciences, Hanyang University, 222 Wangsimni-ro Seongdong-gu, Seoul 04763, Republic of Korea}
\email{shkim0401@gmail.com}

\begin{abstract}
Let $(X, g^+)$ be an asymptotically hyperbolic manifold with conformal infinity $(M, [\hh])$.
Our primary aim is to introduce the prescribed fractional scalar curvature problem on $M$
and to provide its solutions under various geometric conditions on $X$ and $M$.
We also deduce the existence results for the fractional Yamabe problem in the end-point cases, e.g., $n = 3$, $\ga = \frac{1}{2}$ and $M$ is non-umbilic, etc.
Finally, we prove that all solutions we find here are smooth on $M$.
\end{abstract}

\date{\today}
\subjclass[2010]{Primary: 53C21, Secondary: 35R11, 53A30.}
\keywords{Prescribed fractional scalar curvature problem. Fractional Yamabe problem. Existence. Regularity.}
\thanks{S. Kim was partially supported by Basic Science Research Program through the National Research Foundation
of Korea(NRF) funded by the Ministry of Education (NRF2017R1C1B5076384).}
\maketitle

\allowdisplaybreaks
\numberwithin{equation}{section}

\section{Introduction}
The main objective of this paper is to introduce and to examine the prescribed fractional scalar curvature problem.
It is the nonlocal version of the prescribed scalar curvature problem
which has been served as one of the central problems in conformal geometry.

\medskip
Suppose that $X^{n+1}$ is an $(n+1)$-dimensional smooth manifold with boundary $M^n$ and $\rho$ is a {\it defining function} for $M$,
namely, a function in $X$ satisfying
\[\rho > 0 \quad \text{in } X, \quad \rho = 0 \quad \text{on } M \quad \text{and} \quad d\rho \ne 0 \quad \text{on } M.\]
We say that a metric $g^+$ in $X$ is {\it conformally compact} if there is a defining function $\rho$ such that $\bg := \rho^2 g^+$ is a smooth compact metric on the closure $\ox$ of $X$.
If $[\hh]$ denotes the conformal class of the metric $\hh = \bg|_M$ on $M$,
then $(M, [\hh])$ is called the {\it conformal infinity} of the manifold $X$.
An {\it asymptotically hyperbolic} manifold is a conformally compact manifold such that all sectional curvatures tend to -1 as a point approaches to $M$.

Assume that $(X, g^+)$ is conformally compact and Einstein. It is called {\it Poincar\'e-Einstein} and known to be a special example of asymptotically hyperbolic manifolds.
In \cite{GZ}, Graham and Zworski introduced the {\it fractional conformal Laplacian} $P^{\ga}[g^+, \hh]$ on the conformal infinity $(M, [\hh])$ for $n > 2\ga$.
It is a pseudo-differential operator which satisfies the conformal covariance property
\begin{equation}\label{eq-cc}
P^{\ga}[g^+, w^{4 \over n-2\ga} \hh] u = w^{-{n+2\ga \over n-2\ga}}P^{\ga}[g^+, \hh] (wu)
\end{equation}
for all $u, w \in C^{\infty}(M)$ such that $w > 0$ on $M$, and
\begin{equation}\label{eq-sym}
\sigma(P^{\ga}[g^+, \hh]) = \sigma((-\Delta_{\hh}))^{\ga})
\end{equation}
where $\sigma$ denotes the principal symbol and $(-\Delta_{\hh})^{\ga}$ is the fractional power of the Laplace-Beltrami operator on $M$.
If $\ga = 1$ or $2$, it agrees with the classical conformal Laplacian and the Paneitz operator, respectively.
More generally, it is the same as the GJMS operator \cite{GJMS} for each $\ga \in \mn$, constructed via the ambient metric.
For general asymptotically hyperbolic manifolds, the fractional conformal Laplacians on their conformal infinities can be still defined
as in Joshi and S\'a Barreto \cite{JS} or Guillarmou \cite{Gui}; see also \cite{MM, CG, GQ}.

Set the {\it fractional scalar curvature} (or {\it $\ga$-scalar curvature}) by $Q^{\ga}[g^+, \hh] := P^{\ga}[g^+, \hh](1)$.
If $(X, g^+)$ is Poincar\'e-Einstein and $\ga = 1$ or $2$,
then it is nothing but the scalar curvature or Branson's $Q$-curvature up to a constant multiple, respectively.
In this paper, we study the {\it prescribed fractional scalar curvature problem} (or the {\it prescribed $\ga$-scalar curvature problem}) for $\ga \in (0,1)$, which addresses:

\medskip
Given a smooth function $f$ on the boundary $M$, can one find a metric $\hh_0 \in [\hh]$ on $M$ whose $\ga$-scalar curvature $Q^{\ga}[g^+, \hh_0]$ is $f$?

\medskip \noindent
By virtue of \eqref{eq-cc}, solving this problem is equivalent to looking for a solution to the equation
\begin{equation}\label{eq-ps}
P^{\ga}[g^+, \hh] u = f u^{n+2\ga \over n-2\ga} \quad \text{and} \quad u > 0 \quad \text{on } (M^n, \hh)
\end{equation}
provided that $n > 2\ga$.

\medskip
This type of the problems dates back to at least the work of Kazdan and Warner in 1970s.
In \cite{KW}, they proved that for a compact manifold $M^n$ with $n \ge 3$, a smooth function $f$ that is somewhere negative on $M$ can be a scalar curvature,
and every smooth function $f$ can be a scalar curvature if and only if $M$ admits a metric whose scalar curvature is positive.
To get these results, they attempted to make use of \eqref{eq-ps} with $\ga = 1$.
Unfortunately, there exists a function $f$ such that the equation does not have a solution.
To resolve this obstacle, they introduced an auxiliary diffeomorphism $\varphi$ on $M$
and replaced the right-hand side of \eqref{eq-ps} with $(f \circ \varphi) u^{n+2\ga \over n-2\ga}$.
This idea gives additional flexibility for the existence of $u$.

Later, by studying the associated constrained minimization problem to \eqref{eq-ps} with $\ga = 1$,
Escobar and Schoen \cite{ES} proved that \eqref{eq-ps} with $\ga = 1$ has a solution
if $(M, \hh)$ is a locally conformally flat compact manifold whose scalar curvature $R[\hh]$ is positive and fundamental group is non-trivial,
$f$ is a function positive somewhere on $M$, and it achieves a global maximum point at which $\nabla^k f = 0$ for $k = 1, \cdots, n-2$.
They also considered when a locally conformally flat manifold $M$ has the vanishing scalar curvature.
Using a similar idea, Aubin and Hebey \cite{AH} and Hebey and Vaugon \cite{HV} studied when the Weyl tensor is not entirely zero on $M$.

For a smooth compact Riemannian manifold $(\ox^{n+1}, \bg)$ with boundary $(M^n, \hh)$, Escobar \cite{Es} introduced the prescribed mean curvature problem,
which is deeply related to \eqref{eq-ps} with $\ga = \frac{1}{2}$, and solved it under various geometric settings.
For instance, if $X$ has the positive Sobolev quotient, then one can find a solution provided that either
\begin{itemize}
\item[-] $f$ is positive somewhere on $M$ and achieves a global maximum at a non-umbilic point $y$
    where
    \[\frac{-\Delta_{\hh} f(y)}{f(y)} < c \|\tni - H\hh \|_{\hh}^2(y)\]
    for some sufficiently small constant $c > 0$; or
\item[-] $X$ is locally conformally flat but not conformally diffeomorphic to the Euclidean ball, $M$ is umbilic,
    $f$ is a function positive somewhere on $M$, and it achieves a global maximum point at which $\nabla^k f = 0$ for $k = 1, \cdots, n-1$.
\end{itemize}
In the above, $\tni$ is the second fundamental form on $(M, \hh) \subset (\ox, \bg)$, $H$ is the mean curvature and $\|\cdot\|_{\hh}$ is the tensor norm.

As we will see, our theorems provide extensions of the above mentioned results,
which cover all $\ga \in (0,1)$ and most of situations such that the fractional Yamabe constant $\Lambda^{\ga}(M, [\hh])$, which will be defined in \eqref{eq-Y-const}, is positive.

\medskip
If $(X, g^+)$ is the Poincar\'e ball and its conformal infinity $(M^n, \hh)$ is the standard sphere, Eq. \eqref{eq-ps} is said to be the {\it fractional Nirenberg problem}.
By means of the stereographic projection, it is reduced to the problem
\begin{equation}\label{eq-ps-ni}
(-\Delta)^{\ga} u = f u^{n+2\ga \over n-2\ga} \quad \text{and} \quad u > 0 \quad \text{on } \mr^n.
\end{equation}
For $\ga \in (0,1)$, by employing the Caffarelli-Silvestre extension \cite{CS}, Jin et al. \cite{JLX} showed that the solution set of \eqref{eq-ps-ni} is compact
if $f$ is positive somewhere on $\mr^n$ and has flatness order greater than $n-2\ga$ at each critical point\footnote{A function $f \in C(M)$ is said to have {\it flatness order grater than $d > 0$} at a point $y \in M$,
if for some local coordinate system $\bx = (x_1, \cdots, x_n)$ of $M$ centered at $y$,
there exists a neighborhood $\mathcal{N}$ of $0$ such that $f(\bx) = f(0) + o(|\bx|^d)$ in $\mathcal{N}$.}.
Also, in \cite{JLX2}, they deduced an existence result by applying the compactness theorem of \cite{JLX} and the degree counting argument;
compare this with our Theorem \ref{thm-ps-PE}, especially, paying attention to the flatness order of $f$.
On the other hand, the authors in \cite{AC, CLZ} obtained some existence criterions involving a topological condition on the level set of $f$,
by establishing Euler-Hopf type index formulae.
Furthermore, Abdelhedi et al. \cite{ACH} dealt with the case when the flatness order of $f$ is in $(1, n-2\ga]$.
For $\ga \in (0, \frac{n}{2})$, Jin et al. \cite{JLX3} recently extended the results of \cite{JLX, JLX2} by analyzing \eqref{eq-ps-ni}
through its integral representations instead of appealing the extension theorem.

\medskip
If $f$ is a constant on $M$, we call \eqref{eq-ps} the {\it fractional Yamabe problem} (or the {\it $\ga$-Yamabe problem}).
As in the classical case $\ga = 1$, if $M$ is the standard sphere, the solution set of \eqref{eq-ps} or \eqref{eq-ps-ni} with $f = 1$
consists of the bubbles $w_{\ls}$ defined in \eqref{eq-bubble-2}; refer to \cite[Theorem 1.8]{JLX} for its proof.
Their scaling invariance induces the loss of compactness of the Sobolev embedding $H^{\ga}(\Omega) \hookrightarrow L^{2n \over n-2\ga}(\Omega)$ on a smooth bounded domain $\Omega$.
For general manifolds and $\ga \in (0,1)$, \eqref{eq-ps} has been investigated by several researchers \cite{GQ, GW, KMW, MN, DSV} and most of cases are covered up to now.
Also, Qing and Raske \cite{QR} studied it assuming that $\ga \in [1, \frac{n}{2})$ and $M$ is locally conformally flat manifold
with positive Yamabe constant and Poincar\'e exponent less than $\frac{n-2\ga}{2}$.

The next theorem describes the result of Kim et al. \cite{KMW}, which generalizes the pioneering works of Gonz\'alez and Qing \cite{GQ} and Gonz\'alez and Wang \cite{GW}.
\begin{thmx}[Kim, Musso, Wei \cite{KMW}]\label{thm-Y}
Assume that the first $L^2(X)$-eigenvalue $\lambda_1(-\Delta_{g+})$ of the Laplace-Beltrami operator $-\Delta_{g+}$ satisfies
\begin{equation}\label{eq-eig}
\lambda_1(-\Delta_{g+}) > {n^2 \over 4} - \ga^2.
\end{equation}
Suppose also that $\rho$ is the {\it geodesic defining function} in $(X, g^+)$ associated to $(M, \hh)$,
which is a unique defining function splitting the metric $\bg = \rho^2 g^+$ into $d\rho^2 + h_{\rho}$ near $M$
with a family of metrics $\{h_{\rho}\}_{\rho}$ on $M$ such that $h_0 = \hh$.
If one of the following conditions
\begin{enumerate}
\item[(a)] $n \ge 2$, $\ga \in (0, \frac{1}{2})$ and the boundary $(M, \hh)$ of $(\ox, \bg)$ has a point at which the mean curvature $H$ is negative;
\item[(b)] $n \ge 4$, $\ga \in (0, 1)$, $(M, \hh)$ is the non-umbilic boundary of $(\ox, \bg)$ and
    \begin{equation}\label{eq-R-dec}
    R[g^+] + n(n+1) = o(\rho^2) \quad \text{as } \rho \to 0 \text{ uniformly on } M
    \end{equation}
    where $R[g^+]$ denotes the scalar curvature of $(X, g^+)$;
\item[(c)] $n > 3 + 2\ga$, $\ga \in (0,1)$, $(M, \hh)$ is the umbilic boundary of $(\ox, \bg)$
    such that a covariant derivative $R_{\rho\rho;\rho}[\bg]$ of the component $R_{\rho\rho}[\bg]$ of the Ricci tensor on $(\ox ,\bg)$ is negative somewhere on $M$, and \eqref{eq-R-dec} holds;
\item[(d)] $n > 4 + 2\ga$, $\ga \in (0,1)$, $(M, \hh)$ is the umbilic non-locally conformally flat boundary of $(\ox, \bg)$ and
    \begin{equation}\label{eq-R-dec-2}
    R[g^+] + n(n+1) = o(\rho^4) \quad \text{as } \rho \to 0 \text{ uniformly on } M
    \end{equation}
\end{enumerate}
is satisfied, then the $\ga$-Yamabe problem is solvable.
\end{thmx}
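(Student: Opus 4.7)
The plan is a standard Aubin-type subcritical approach adapted to the nonlocal setting. The first step is to realize $P^{\ga}[g^+, \hh]$ through an extension characterization (Chang-Gonz\'alez, extending Caffarelli-Silvestre to asymptotically hyperbolic manifolds): solutions of \eqref{eq-ps} with $f$ constant correspond to critical points of a Rayleigh-type quotient for a weighted degenerate elliptic equation on the bulk $X$ of the form $-\textnormal{div}_{\bg}(\rho^{1-2\ga}\nabla U) + E(\rho)U = 0$, with trace $u$ on $M$. The infimum of this functional is the fractional Yamabe constant $\ola^{\ga}(M, [\hh])$. Assumption \eqref{eq-eig} guarantees that $P^{\ga}$ agrees with the scattering operator and that the functional is coercive on $\hx$, so the whole problem becomes genuinely variational.

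Next, by a concentration-compactness analysis for this weighted extension, a minimizer exists (and hence solves the $\ga$-Yamabe problem) as soon as one verifies the strict inequality
\[
\ola^{\ga}(M, [\hh]) < \ola^{\ga}(\ms^n, [g_c]),
\]
the right-hand side being the Yamabe constant of the round sphere, realized by the explicit bubbles $w_{\ls}$. Thus the entire proof reduces to producing, in each of the four geometric regimes, a test function whose Rayleigh quotient strictly beats this sphere value.

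The test functions are truncated and suitably corrected bubbles $W_{\ls}$ concentrated at a chosen point $y_0 \in M$ and written in Fermi coordinates for the geodesic defining function $\rho$. Expanding the energy in the concentration parameter $\lambda$ produces a leading correction governed by the local geometry at $y_0$. In case (a) the mean curvature $H(y_0) < 0$ enters at order $\lambda$, integrability of the corresponding term requiring $\ga < \frac{1}{2}$. In case (b) the umbilicity defect $\|\tni - H \hh\|_{\hh}^2(y_0) > 0$ enters at order $\lambda^2$ with the right sign once $n \ge 4$, with \eqref{eq-R-dec} killing the competing interior contribution from $R[g^+] + n(n+1)$. In case (c) the umbilic condition forces one to work at a higher order, where $R_{\rho\rho;\rho}[\bg](y_0) < 0$ provides the negative term, and the threshold $n > 3 + 2\ga$ records the needed integrability. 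In case (d), at a non-locally-conformally-flat umbilic point, the obstruction comes from the Weyl tensor of $\hh$ at $y_0$, now under the stronger decay hypothesis \eqref{eq-R-dec-2} so that the bulk remainder is still negligible.

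The main obstacle is the test-function expansion itself. Because $P^{\ga}$ is nonlocal, the bulk integral couples the interior geometry of $(\ox, \bg)$ with the extrinsic geometry on $M$, and the desirable negative leading term is generally hidden behind several competing contributions that must be organized and cancelled. In the umbilic cases (c) and (d), one must moreover correct $W_{\ls}$ by a solution of a linear problem involving the weighted Laplacian, in order to absorb the unwanted lower-order terms, and then control the remainders via weighted Sobolev estimates on $\hx$. The slow decay of the bubbles for small $\ga$ is precisely what dictates the dimensional thresholds in (b)--(d), so matching the hypotheses to this integrability budget is the delicate point of the argument.
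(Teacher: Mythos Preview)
Your outline is correct and matches the approach of \cite{KMW} as summarized in this paper: Chang--Gonz\'alez extension, the strict-inequality criterion $\ola^{\ga}(X,[\hh]) < \ola^{\ga}(\mh^N,[\hh_c])$ via subcritical approximation, and then test-function expansions in Fermi coordinates at a well-chosen boundary point. One factual slip: you have the correction terms attached to the wrong cases. In \cite{KMW} (see the references in Section~\ref{sec-ps-ex} here), the plain truncated bubble $\chi_{r_2} W_{\ep,0}$ already suffices in cases (a) and (c); the corrector $\Psi_{1\ep}$ of \eqref{eq-psi_1} is needed in the \emph{non}-umbilic case (b) to make the $\ep^2$ coefficient negative, and $\Psi_{2\ep}$ of \eqref{eq-psi_2} is needed in case (d). So the linear correction step belongs to (b) and (d), not to (c) and (d).
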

\begin{rmk}
For the precise meaning of the solvability of the $\ga$-Yamabe problem, refer to Theorem \ref{thm-reg} below. We have two additional remarks on the previous theorem.

\medskip \noindent
(1) The sign of $H$ or $R_{\rho\rho;\rho}[\bg]$ at a given point on $M$ is {\it intrinsic}, namely, independent of the choice of a representative of the conformal class $[\hh]$ on $M$.
The proof of this fact is given in \cite[Lemma 2.3]{GQ} and \cite[Lemma 2.3]{GW}, respectively.
On the other hand, if \eqref{eq-R-dec} is valid, then $H = 0$ on $M$. Refer to our Lemma \ref{lemma-g-mtr}.

\medskip \noindent
(2) Our condition \eqref{eq-R-dec-2} on (d) is weaker than the corresponding one in \cite[(1.19)]{KMW}.
In fact, one can prove \cite[Lemma 3.2]{KMW} by employing \eqref{eq-R-dec-2} only,
while the argument presented in \cite{KMW} requires \cite[(1.19)]{KMW}.
See Lemma \ref{lemma-g-mtr-2} for details.
\end{rmk}

The first main theorem of this paper deals with the existence of positive solutions to \eqref{eq-ps}
provided that one of the geometric assumptions (a), (b), (c) and (d) in Theorem \ref{thm-Y} is valid and the function $f$ has a suitable behavior.
\begin{thm}\label{thm-ps}
Assume that \eqref{eq-eig} holds, $\Lambda^{\ga}(M, [\hh]) > 0$ and $f$ is a smooth function positive somewhere on $M$. If one of the following conditions
\begin{enumerate}
\item[(A)] condition \textnormal{(a)} holds;
\item[(B)] condition \textnormal{(b)} holds, $f$ achieves a global maximum point at a nonumbilic point $y$ of $M$ and
    \begin{equation}\label{eq-cond-1}
    {-\Delta_{\hh} f(y) \over f(y)} < c^1_{n, \ga} \|\tni\|_{\hh}^2(y)
    \end{equation}
    for some constant $c^1_{n, \ga} > 0$ depending only on $n$ and $\ga$;
\item[(C)] condition \textnormal{(c)} holds, $f$ achieves a global maximum point at $y \in M$, $-\Delta_{\hh} f(y) = 0$ and $R_{\rho\rho;\rho}[\bg](y) < 0$;
\item[(D)] condition \textnormal{(d)} holds, $f$ achieves a global maximum point at a non-locally conformally flat point $y$ of $M$,
    \begin{equation}\label{eq-cond-2}
    -\Delta_{\hh} f(y) = 0 \quad \text{and} \quad {-(-\Delta_{\hh})^2 f(y) \over f(y)} < c^2_{n, \ga} \|W\|_{\hh}^2(y)
    \end{equation}
    for some constant $c^2_{n, \ga} > 0$ depending only on $n$ and $\ga$
\end{enumerate}
is satisfied, then the prescribed $\ga$-scalar curvature problem \eqref{eq-ps} is solvable. Here
\begin{enumerate}
\item[-] $\Lambda^{\ga}(M, [\hh]) > 0$ is the fractional Yamabe constant whose definition is introduced in \eqref{eq-Y-const};
\item[-] $-\Delta_{\hh}$ is the Laplace-Beltrami operator on $(M, \hh)$, a positive semi-definite operator;
\item[-] $\tni$ is the second fundamental form on $(M, \hh) \subset (\ox, \bg)$ and $\|\tni\|_{\hh}$ is its 2-tensor norm;
\item[-] $W$ is the $(0, 4)$ Weyl tensor on $(M, \hh)$ and $\|W\|_{\hh}$ is its 4-tensor norm.
\end{enumerate}
\end{thm}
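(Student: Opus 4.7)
My plan is to reduce the nonlocal equation \eqref{eq-ps} to a local weighted-elliptic boundary value problem on $X$ via the $\ga$-harmonic extension (the framework used in Theorem \ref{thm-Y}), and then attack it by a constrained minimization in the spirit of Aubin, Escobar and Escobar--Schoen. If $U \in \hx$ denotes the extension of $u$, solving \eqref{eq-ps} is equivalent to finding a critical point of
\[
I_f(U) = \frac{E(U)}{\left(\int_M f\, |U|^{2n/(n-2\ga)}\, dv_{\hh}\right)^{(n-2\ga)/n}},
\]
where $E$ is the natural weighted Dirichlet energy associated with $P^{\ga}[g^+, \hh]$, which is a positive quadratic form on $\hx$ thanks to $\Lambda^{\ga}(M, [\hh]) > 0$ and \eqref{eq-eig}. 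Setting $\Lambda^{\ga}_f(M, [\hh]) := \inf I_f$ taken over those $U$ with $\int_M f|U|^{2n/(n-2\ga)}\, dv_{\hh} > 0$ (a set which is nonempty since $f > 0$ somewhere on $M$), a standard concentration--compactness analysis as in \cite{GQ, GW, KMW} gives the threshold criterion
\[
\Lambda^{\ga}_f(M, [\hh]) < (\max_M f)^{-(n-2\ga)/n}\, \Lambda^{\ga}(\ms^n) \;\Longrightarrow\; I_f \text{ attains its infimum,}
\]
after which the strong maximum principle for the extension problem promotes a non-negative minimizer to a positive solution of \eqref{eq-ps}; its smoothness on $M$ will follow from Theorem \ref{thm-reg}.

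To verify the strict inequality in each case, I would transplant the standard bubbles $w_{\ls}$ defined in \eqref{eq-bubble-2} onto $X$ via Fermi coordinates around a carefully chosen base point --- a point of negative mean curvature in case (A), and the global maximum point $y$ of $f$ in cases (B), (C), (D) --- cutting them off outside a small geodesic half-ball. In case (A) only the leading correction in the concentration parameter $\lambda$ is needed; the factor $f$ enters through the constant $f(y)>0$ and does not influence the sign, while the negative mean-curvature contribution from \cite{GQ} pushes $I_f$ strictly below the threshold. In case (B) the expansion is carried one order further, so that the second fundamental form contributes a negative term of size $\lambda\,\|\tni\|_{\hh}^2(y)$; meanwhile the Taylor expansion of $f$ around $y$ produces a competing positive term of size $\lambda\,(-\Delta_{\hh} f(y))/f(y)$, and \eqref{eq-cond-1} is exactly the quantitative balance making the sum negative. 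Cases (C) and (D) invoke the umbilic expansions of \cite{KMW}, so that the $\|\tni\|^2$-order vanishes; one then imposes $\Delta_{\hh} f(y)=0$ to kill the matching order on the $f$-side, and the next higher order brings out $R_{\rho\rho;\rho}[\bg](y)$ in (C) and $\|W\|_{\hh}^2(y)$ in (D), balanced against $(-\Delta_{\hh})^2 f(y)$ via \eqref{eq-cond-2}.

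The main obstacle will be case (D): the test-function expansion must be pushed two full orders beyond the classical Escobar--Schoen setting, and the weighted integrals $\int \rho^{1-2\ga}(\cdots)\, dv_{\bg}$ require precise cancellations coming from the geodesic normal form of $\bg$ (cf.\ Lemmas \ref{lemma-g-mtr} and \ref{lemma-g-mtr-2}) together with the Euler--Lagrange identity satisfied by $w_{\ls}$ on $\mr^N_+$. To pin down the sharp constant $c^2_{n, \ga}$ one must isolate, among the many $\lambda$-coefficients that appear, the one multiplying $\|W\|_{\hh}^2(y)$, which reduces to computing a specific ratio of integrals of $w_{\ls}$-type functions against weights of the form $x_N^{1-2\ga}|x|^k$. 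Once that coefficient is shown to be strictly negative, choosing $\lambda$ small enough converts \eqref{eq-cond-2} into the required strict inequality in the threshold criterion, and Theorem \ref{thm-ps} follows.
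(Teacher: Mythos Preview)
Your overall strategy---reduce to the degenerate extension on $X$, set up the constrained minimization with threshold $(\max_M f)^{-(n-2\ga)/n}\,\Lambda^{\ga}(\ms^n)$, and verify the strict inequality with concentrated bubbles centred in Fermi coordinates---is exactly the route the paper takes (Proposition~\ref{prop-cpt} followed by Section~\ref{sec-ps-ex}). Two points in your plan, however, would not go through as written.

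First, in case (A) you propose centring the bubble at a point of negative mean curvature rather than at a global maximum of $f$. That fails at leading order: if $f(y)<\max_M f$, then $I_f$ evaluated on the bubble at $y$ is $(f(y))^{-(n-2\ga)/n}\Lambda^{\ga}(\ms^n)+o(1)$, which already exceeds the threshold $(\max_M f)^{-(n-2\ga)/n}\Lambda^{\ga}(\ms^n)$, and no $O(\ep)$ correction can repair this. The paper centres at the maximum of $f$ so that the zero-order terms match exactly, and then the $O(\ep^2)$ contribution of $f$ is dominated by the $O(\ep)$ mean-curvature term. Second, in cases (B) and (D) the plain truncated bubble is not sufficient. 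The paper does \emph{not} use $\chi_{r_2}W_{\ep,0}$ alone but the corrected test functions $\chi_{r_2}(W_{\ep,0}+\Psi_{1\ep})$ with $\Psi_{1\ep}=C_1\,\tni_{ij}(y)\,x_ix_jx_N\,r^{-1}\pa_r W_{\ep,0}$ in (B), and $\chi_{r_2}(W_{\ep,0}+\Psi_{2\ep})$ with $\Psi_{2\ep}=C_2\,R_{iNjN}[\bg](y)\,x_ix_jx_N^2\,r^{-1}\pa_r W_{\ep,0}$ in (D), optimising over $C_1,C_2\in\mr$. These correctors are what force the $\ep^2$ (resp.\ $\ep^4$) coefficient in the energy to be strictly negative; with the bare bubble the sign can go the wrong way (see the endpoint computation in Proposition~\ref{prop-Y-end-1}, where the pure-bubble $\ep^2|\log\ep|$ coefficient is positive and only the choice $C_1=-\tfrac12$ kills it). The explicit constants $c^1_{n,\ga}$ and $c^2_{n,\ga}$ in \eqref{eq-c_1}--\eqref{eq-c_2} come precisely from this optimisation, via the positive quantities $\mcm_1(n,\ga)$ and $\mcm_{21}(n,\ga)$. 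A minor point: the orders you quote in (B) are off by one---the $\|\tni\|^2$-term and the $\Delta_{\hh}f$-term both enter at order $\ep^2$, not $\ep$.
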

\begin{rmk}
For the precise meaning of the solvability of the $\ga$-scalar curvature problem, refer to Theorem \ref{thm-reg} below.
We have two additional remarks on the previous theorem.

\medskip \noindent (1) Clearly, it holds that $-\Delta_{\hh} f(y) \ge 0$ for any local maximum point $y \in M$ of $f$.

\medskip \noindent (2) In \eqref{eq-c_1} and \eqref{eq-c_2}, we present explicit values of the positive constants $c^1_{n,\ga}$ and $c^2_{n,\ga}$.
It is interesting to know if the suggested values are somehow optimal.
\end{rmk}

In fact, we can extend the above theorems to the end-point case.
First, inspired by the works of Marques \cite{Ma} and Almaraz \cite{Al} for the boundary Yamabe problem,
we can prove the following result on the $\frac{1}{2}$-Yamabe problem.
It validates the expectation in Remarks 1.2 (4) and 1.4 (3) of \cite{KMW}.
\begin{thmx}\label{thm-Y-end}
Suppose that $\ga = \frac{1}{2}$ and \eqref{eq-eig} is valid. If one of the following conditions
\begin{enumerate}
\item[(b$^\prime$)] $n = 3$, $(M, \hh)$ is the non-umbilic boundary of $(\ox, \bg)$ and \eqref{eq-R-dec} holds;
\item[(c$^\prime$)] $n = 4$, $(M, \hh)$ is the umbilic boundary of $(\ox, \bg)$, $R_{\rho\rho;\rho}[\bg]$ is negative at some point on $M$ and \eqref{eq-R-dec} holds;
\item[(d$^\prime$)] $n = 5$, $(M, \hh)$ is the umbilic non-locally conformally flat boundary of $(\ox, \bg)$ and \eqref{eq-R-dec-2} holds
\end{enumerate}
is satisfied, then the $\ga$-Yamabe problem is solvable.
\end{thmx}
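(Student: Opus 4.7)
The plan is to reduce each of (b$^\prime$), (c$^\prime$), (d$^\prime$) to the strict inequality
$\Lambda^{\ga}(M, [\hh]) < \Lambda^{\ga}(S^n)$
for the round sphere, since once this strict inequality is established, the concentration-compactness analysis in the weighted extension formulation that underlies Theorem \ref{thm-Y} produces a minimizer of the fractional Yamabe quotient in $\hx$. Thus it suffices, for each end-point configuration, to exhibit a test function $u_{\ep} \in \hx$ whose quotient lies strictly below the sphere threshold once $\ep$ is small enough. The fact that $\ga = \frac{1}{2}$ makes the extension weight $\rho^{1-2\ga}$ trivial, so the computation degenerates to a boundary-type variational problem that is closely parallel to the boundary Yamabe setting of Marques \cite{Ma} and Almaraz \cite{Al}.

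The test functions are concentrating bubbles centered at judiciously chosen base points: a non-umbilic $y_0 \in M$ in case (b$^\prime$); a point with $R_{\rho\rho;\rho}[\bg](y_0) < 0$ in case (c$^\prime$); and a non-locally-conformally-flat $y_0 \in M$ in case (d$^\prime$). Following Marques's conformal Fermi normal coordinate construction and its adaptation in \cite{KMW}, I would first replace $\hh$ by a conformal representative $\widetilde{h} \in [\hh]$ and pick an associated geodesic defining function $\widetilde{\rho}$ so that, in Fermi coordinates centered at $y_0$, as many curvature terms as possible in the Taylor expansion of the extended metric are annihilated. Into these coordinates the half-space bubble $w_{\ls}$ of \eqref{eq-bubble-2} is transplanted, cut off on a fixed geodesic ball, and in case (d$^\prime$) augmented additively by the solution of a weighted Poisson problem on $\mr^N_+$ with weight $x_N^{1-2\ga}$ that cancels the leading $\ep$-independent local error. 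Expanding the extended energy at the relevant order yields coefficients proportional to $-\|\tni\|_{\hh}^2(y_0)$, $R_{\rho\rho;\rho}[\bg](y_0)$, and $-\|W\|_{\hh}^2(y_0)$ in the three cases, respectively, so the geometric hypotheses furnish the required strict negativity; the asymptotic conditions \eqref{eq-R-dec} and \eqref{eq-R-dec-2} are used, just as in Theorem \ref{thm-Y}, to suppress the Ricci contributions along the normal direction so that they do not swamp these leading terms.

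The main obstacle is precisely the threshold nature of the dimensions $n = 3, 4, 5$ relative to the strict inequalities $n \ge 4$, $n > 3 + 2\ga$, $n > 4 + 2\ga$ imposed in Theorem \ref{thm-Y}: the bubble integrals that produced the negative geometric contribution in the supercritical range become logarithmically divergent at the threshold, so one can no longer extract the relevant term by naive integration against a globally defined bubble on $X$. The remedy, patterned after Almaraz's analysis of the boundary Yamabe problem in $n = 3, 4, 5$, is to truncate $w_{\ls}$ on a fixed geodesic ball independent of $\ep$, control the mismatch via the Poisson-kernel asymptotics for the extension of $P^{\ga}[g^+, \hh]$ from \cite{CG, JLX}, and absorb the new boundary contribution produced by the cutoff into a strictly lower-order error by exploiting the decay of the corrector. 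Verifying that this refined construction still leaves the leading geometric term negative in each of the three threshold dimensions is the heart of the argument; once the strict Yamabe quotient inequality is in hand, the conclusion follows exactly as in Theorem \ref{thm-Y}.
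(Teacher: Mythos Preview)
Your overall strategy is correct and coincides with the paper's: reduce to the strict inequality via test functions built from truncated bubbles in Fermi coordinates, with the threshold dimensions producing logarithmic terms $\ep^k\log(1/\ep)$ rather than pure powers.

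There is, however, a genuine gap in your treatment of case (b$^\prime$). You assert that only case (d$^\prime$) requires an additive corrector and that the bare bubble already yields a leading coefficient proportional to $-\|\tni\|^2(y_0)$. This is false. For $n=3$, $\ga=\tfrac12$, a direct expansion of $I^{\ga}(\chi_{r_2}W_{\ep,0})$ gives a $+\,c\,\|\tni\|^2\,\ep^2\log(1/\ep)$ term with $c>0$, so the naked bubble lies \emph{above} the threshold. The paper first adds the interior corrector $\Psi_{1\ep}=C_1\,\tni_{ij}x_ix_jx_N\,r^{-1}\pa_rW_{\ep,0}$ (which vanishes on $M$); with $C_1=-\tfrac12$ this kills the $\ep^2\log$ coefficient entirely, leaving only $O(\ep^2)$. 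At that point one still has no strict inequality, and a \emph{second}, non-vanishing-on-$M$ perturbation $C_1'\,\tni_{ij}x_ix_j\,\pa_rW_{\ep,0}$ in the spirit of Marques is needed to produce the negative $-C\ep^2|\log\ep|$ term. Your proposal misses both steps.

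A smaller point: the paper's correctors $\Psi_{1\ep}$, $\Psi_{2\ep}$ are explicit polynomial-times-bubble functions, not solutions of an auxiliary Poisson problem, and the threshold analysis is a direct integral computation rather than an appeal to Poisson-kernel asymptotics; your description of ``absorbing the cutoff boundary contribution'' does not match the actual mechanism, which is the cancellation of the positive $\log$-coefficient by the corrector.
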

\noindent Unlike the boundary Yamabe problem, the $\frac{1}{2}$-Yamabe problem allows us
to choose a metric only on the conformal infinity $M$.
Theorem \ref{thm-Y-end} confirms that the decay assumptions on the scalar curvature $R[g^+]$ given as \eqref{eq-R-dec} and \eqref{eq-R-dec-2} take away the difference of these problems.

Based on the previous theorem, we are able to deduce the following result.
\begin{thm}\label{thm-ps-end}
Assume that $\ga = \frac{1}{2}$, \eqref{eq-eig} holds, $\Lambda^{\ga}(M, [\hh]) > 0$ and $f$ is a smooth function positive somewhere on $M$.
If one of the following conditions
\begin{enumerate}
\item[(B$^{\prime}$)] condition \textnormal{(b$^{\prime}$)} holds and $f$ achieves a global maximum point at a nonumbilic point $y$ of $M$;
\item[(C$^{\prime}$)] condition \textnormal{(c$^{\prime}$)} holds, $f$ achieves a global maximum point at $y \in M$, $-\Delta_{\hh} f(y) = 0$ and $R_{\rho\rho;\rho}[\bg](y) < 0$;
\item[(D$^{\prime}$)] condition \textnormal{(d$^{\prime}$)} holds, $f$ achieves a global maximum point at a non-locally conformally flat point $y$ of $M$ and $-\Delta_{\hh} f(y) = 0$
\end{enumerate}
is satisfied, then the prescribed $\ga$-scalar curvature problem \eqref{eq-ps} is solvable.
\end{thm}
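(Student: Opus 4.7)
The plan is to carry over the variational argument used for Theorem \ref{thm-ps}, pushed into the end-point dimensions $n \in \{3,4,5\}$ at $\ga = \frac{1}{2}$ by feeding in the refined test-function construction underlying Theorem \ref{thm-Y-end}. Define the $f$-weighted $\ga$-Yamabe quantity
\[
\Lambda^{\ga}_f(M,[\hh]) := \inf \frac{\int_M (P^{\ga}[g^+,\hh] u)\, u \, dv_{\hh}}{\(\int_M f\,|u|^{2n/(n-2\ga)}\,dv_{\hh}\)^{(n-2\ga)/n}},
\]
the infimum being taken over $u \not\equiv 0$ in the natural Sobolev space, or equivalently over extensions $U \in \hx$ via the scattering/extension picture that defines \eqref{eq-Y-const}. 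Positive minimizers of $\Lambda^{\ga}_f(M,[\hh])$ are weak solutions of \eqref{eq-ps}, and a standard concentration-compactness argument extracts one as soon as the strict sub-criticality inequality
\[
\Lambda^{\ga}_f(M,[\hh]) \; < \; \frac{\Lambda^{\ga}(\ms^n)}{(\sup_M f)^{(n-2\ga)/n}}
\]
is secured; smoothness of the resulting minimizer is then inherited from Theorem \ref{thm-reg}.

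To verify the strict bound, I plug into the functional exactly the test functions produced in the proof of Theorem \ref{thm-Y-end}: the standard bubbles $w_{\ls}$ centered at a global maximum point $y$ of $f$, modified by Marques--Almaraz-type correctors which absorb the geometric data of $(\ox,\bg)$ near $y$. The numerator expansion is the very one established there, yielding a strictly negative leading geometric coefficient proportional to $\|\tni\|_{\hh}^2(y)$ in case (B$'$), to $-R_{\rho\rho;\rho}[\bg](y)$ in (C$'$), and to $\|W\|_{\hh}^2(y)$ in (D$'$). The denominator expansion is obtained by Taylor-expanding $f$ in $\hh$-normal coordinates centered at $y$: one gets a leading term $f(y)\int|w_{\ls}|^{2n/(n-2\ga)}$, a first correction proportional to $(-\Delta_{\hh} f)(y)\ge 0$, and higher corrections governed by $(-\Delta_{\hh})^2 f(y)$. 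The end-point dimensions are tuned so that, in (C$'$) and (D$'$), the negative geometric numerator term and the $-\Delta_{\hh} f(y)$ denominator term sit at the \emph{same} order in the concentration parameter $\lambda$; imposing $-\Delta_{\hh} f(y)=0$ then removes the positive contribution and the geometric sign decides the inequality. In case (B$'$) the non-umbilic contribution already sits at a strictly lower order than any derivative of $f$ at $y$, so no further vanishing condition on $f$ is needed beyond positivity at a maximum.

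The principal technical obstacle is the end-point Yamabe expansion itself: in these low dimensions the integrals defining the critical coefficients are only logarithmically convergent against the raw bubble, and the corrected test functions of Theorem \ref{thm-Y-end} are essential in order for the divergences to cancel and a finite, sign-definite leading coefficient to emerge. Once that expansion is accepted, the additional bookkeeping for incorporating the prescribed function $f$ reduces to routine integrations of radial bubble profiles against polynomials in $\bx$, entirely controlled by the hypotheses $\nabla f(y)=0$ (automatic at a maximum) and $-\Delta_{\hh} f(y)=0$ where assumed; the non-trivial work is thus confined to the geometric expansion inherited from Theorem \ref{thm-Y-end}.
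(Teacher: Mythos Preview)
Your approach is essentially the same as the paper's: both reduce the problem, via Proposition \ref{prop-cpt}, to verifying the strict inequality in \eqref{eq-cpt} by plugging in the test functions built in the proof of Theorem \ref{thm-Y-end} and then expanding the $f$-weighted denominator by Taylor's formula at $y$.

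One inaccuracy worth correcting: in (C$'$) and (D$'$) the $-\Delta_{\hh} f(y)$ contribution in the denominator does \emph{not} sit at the same order as the geometric numerator correction. It enters at order $\ep^2$, which is strictly \emph{stronger} than the geometric term, which is of order $\ep^3\log\ep$ in (C$'$) ($n=4$) and $\ep^4\log\ep$ in (D$'$) ($n=5$). The hypothesis $-\Delta_{\hh} f(y)=0$ is needed precisely because otherwise this dominant term---nonnegative at a maximum---would swamp the favorable geometric correction; once it vanishes, the next $f$-contribution is $O(\ep^4)$, which the logarithmic geometric term beats. This is also why (D$'$) carries no smallness condition on $(-\Delta_{\hh})^2 f(y)$, in contrast to condition (D).
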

\begin{rmk}
Our proof for the above theorem also produces analogous results to (B), (B$^{\prime}$), (D) and (D$^{\prime}$) for the prescribed mean curvature problem, which extend the work of Escobar \cite{Es}.
In \cite[Theorem 3.3]{Es}, the result corresponding to (B) was obtained for the prescribed mean curvature problem under the additional assumption that $n \ge 6$.
It is notable that $\max_M f = 1$ is implicitly assumed in condition (3.20) of \cite{Es}.

For the classical prescribed scalar curvature problem, which corresponds to $\ga = 1$, an analogous result to (D) was obtained by Aubin and Hebey \cite{AH}.
Our method gives the way to compute the explicit value of $c_{n,1}^2$, which was not treated in their paper.
See Hebey and Vaugon \cite{HV} for further investigation in this direction.
\end{rmk}

Suppose now that $(X, g^+)$ is Poincar\'e-Einstein and $(M, \hh)$ is locally conformally flat\footnote{Recall that all 2-dimensional Riemannian manifolds are locally conformally flat.}.
Let $\rho$ be the geodesic defining function in $X$ associated to $(M, \hh)$.
In \cite[Proposition 1.5]{KMW}, it was shown that for each $y \in M$, there exists the {\it Green's function} $G(\cdot,y)$ on $\ox \setminus \{y\}$ which solves
\[\begin{cases}
-\text{div}_{\bg}(\rho^{1-2\ga}\nabla G(\cdot,y)) + E(\rho)\, G(\cdot,y) = 0 &\text{in } (X, \bg),\\
\pa^{\ga}_{\nu} G(\cdot,y) := - \kappa_{\ga} \(\lim\limits_{\rho \to 0+} \rho^{1-2\ga} \dfrac{\pa G(\cdot,y)}{\pa \rho}\) = \delta_y &\text{on } M
\end{cases}\]
in the distribution sense where $\kappa_{\ga} := \frac{2^{2\ga}\Gamma(\ga)}{2\Gamma(1-\ga)} > 0$ and $\delta_y$ is the Dirac measure at $y$.
Consult Proposition \ref{prop-GQ-ext} below for the motivation of the definition of our Green's function.
Moreover, it was shown in \cite[Corollary 6.1]{MN} that the Green's function can be written as
\begin{equation}\label{eq-Green}
G(x,y) = g_{n,\ga}\, d_{\bg}(x,y)^{-(n-2\ga)} + A + \Psi(d_{\bg}(x,y))
\end{equation}
for any $x \in \ox$ near $y \in M$, where $g_{n,\ga} > 0$ is a suitable normalizing constant, $A \in \mr$ and $\Psi$ is a function in $\mr$ satisfying
\[|\Psi(t)| \le C|t|^{\min\{1,2\ga\}} \quad \text{and} \quad |\Psi'(t)| \le C|t|^{\min\{0,2\ga-1\}}\]
for $t$ small.
Under a technical condition $A > 0$, we can deduce the following theorem.
\begin{thm}\label{thm-ps-PE}
Suppose that $\ga \in (0,1)$, \eqref{eq-eig} holds, $(X, g^+)$ is a Poincar\'e-Einstein manifold, $\Lambda^{\ga}(M, [\hh]) > 0$ and $f$ is a smooth function positive somewhere on $M$.
Assume also that the constant $A$ in \eqref{eq-Green} is positive.
If one of the following conditions
\begin{enumerate}
\item[(E)] $n > 2\ga$, $(M,\hh)$ is locally conformally flat,
    $f$ achieves a global maximum $y \in M$ and it has flatness order greater than $n-2\ga$ at $y$; or
\item[(E$^{\prime}$)] $n = 2$, $f$ achieves a global maximum $y \in M$ and it has flatness order greater than $2-2\ga$ at $y$
\end{enumerate}
is satisfied, then the prescribed $\ga$-scalar curvature problem \eqref{eq-ps} is solvable.
\end{thm}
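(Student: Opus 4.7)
The plan is to solve \eqref{eq-ps} variationally via the Chang--Gonz\'alez (Caffarelli--Silvestre type) extension characterization of $P^{\ga}$. Working on the weighted space $\hx$, the $\ga$-scalar curvature equation is equivalent to the Euler--Lagrange equation of
\[
\mathcal{I}[U] \;=\; \frac{1}{\kappa_{\ga}} \int_X \rho^{1-2\ga}|\nabla U|_{\bg}^2\, dv_{\bg} + \int_X E(\rho)\, U^2\, dv_{\bg},
\qquad U|_M = u,
\]
constrained by $\int_M f\,|u|^{2n/(n-2\ga)}\, dv_{\hh} = 1$. Define the $f$-weighted fractional Yamabe quotient $\Lambda^{\ga}_f(M,[\hh])$ as the infimum of $\mathcal{I}[U]$ over this constraint. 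Condition \eqref{eq-eig} and $\Lambda^{\ga}(M,[\hh]) > 0$ (together with $f$ positive somewhere) make the constraint nonempty with coercive functional. By the standard subcritical--pass--to--limit argument of Aubin/Schoen, a minimizer exists as soon as the strict inequality
\[
\Lambda^{\ga}_f(M,[\hh]) \;<\; f(y)^{-\frac{n-2\ga}{n}}\,\Lambda^{\ga}(\ms^n,[g_c])
\]
holds for the prescribed maximum point $y$, since $f(y)$ is the largest value of $f$ and the Sobolev loss of compactness can only occur in a blow-up profile that sees the sharp sphere constant rescaled by the local value of $f$.

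The core work is therefore to build a test configuration at $y$ that beats the sphere threshold. I would imitate the Schoen--Escobar--style construction used for positive-mass Yamabe problems: glue a standard bubble $w_{\ls}$ (an extremal on $\ms^n$) centered at $y$ to a multiple of the Green's function $G(\cdot,y)$ furnished by \eqref{eq-Green} using a smooth cutoff at an intermediate scale $r \to 0$. Compute $\mathcal{I}$ on this gluing and, separately, the denominator $\int_M f|u|^{2n/(n-2\ga)}$, expanding in $\lambda \to \infty$ (or $\sigma \to 0$). The inner-bubble contribution reproduces $\Lambda^{\ga}(\ms^n,[g_c])$ up to the factor $f(y)^{\cdots}$; the annular matching region contributes a term proportional to $-A$ coming from the expansion \eqref{eq-Green}, exactly as the positive mass yields a negative correction in the classical Yamabe problem; the outer region is exponentially small. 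Thus the assumption $A > 0$ supplies a strictly negative correction $-c A\,\lambda^{-(n-2\ga)} + o(\lambda^{-(n-2\ga)})$ to the quotient.

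It remains to verify that the perturbation introduced by $f \not\equiv f(y)$ does not destroy this gain. Writing $f(x) = f(y) + [f(x)-f(y)]$ in the constraint integral and changing variables to bubble scale, the correction has the form
\[
\int_{\mr^n} \frac{f(y+\sigma z)-f(y)}{f(y)}\, w_{1,0}(z)^{\frac{2n}{n-2\ga}}\,dz,
\]
whose size is governed by the flatness order of $f$ at $y$. Since in case (E) the flatness order exceeds $n-2\ga$ (respectively, exceeds $2-2\ga$ in case (E$'$), where $n-2\ga = 2-2\ga$ is locally conformally flat automatically), this error is of order $o(\sigma^{n-2\ga}) = o(\lambda^{-(n-2\ga)})$, strictly subleading with respect to the Green-function gain $-cA\,\lambda^{-(n-2\ga)}$. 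Optimizing in $\lambda$ therefore establishes the strict inequality above, and the minimizer $u \ge 0$ follows from replacing $u$ by $|u|$; positivity and smoothness come from Theorem~\ref{thm-reg}.

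The main obstacle I anticipate is the matching/gluing estimate: because the Green's function expansion \eqref{eq-Green} controls $G$ only to order $\min\{1,2\ga\}$ beyond the constant $A$, the weighted energy must be computed on both $X$ and the degenerate boundary carefully, using cutoff scales chosen to balance the inner-bubble error with the outer Green-function error. Additional subtlety arises from $E(\rho)$ and the term $\int_X E(\rho) U^2$, which must be shown to contribute at order lower than $\lambda^{-(n-2\ga)}$; this is where one uses that $(X,g^+)$ is Poincar\'e--Einstein so that the potential $E(\rho)$ has the precise decay \eqref{eq-R-dec} (indeed all orders for Einstein). Once these estimates are carried out, case (E$'$) is the specialization $n=2$ with $(M,\hh)$ automatically locally conformally flat, so the same test function does the job.
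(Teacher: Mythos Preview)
Your approach is essentially the same as the paper's: reduce to the strict inequality in \eqref{eq-cpt} via Proposition~\ref{prop-cpt}, then verify it with a Schoen-type test function that equals the bubble $W_{\ep,0}$ near $y$ and a multiple of the Green's function $G(\cdot,y)$ outside, so that the expansion \eqref{eq-Green} with $A>0$ produces a gain of order $-cA\,\ep^{n-2\ga}$ in the numerator, while the flatness hypothesis on $f$ at $y$ forces the denominator correction to be $o(\ep^{n-2\ga})$. Two small points: your functional $\mathcal{I}$ has the wrong normalization relative to the paper's $I^{\ga}$ in \eqref{eq-I^ga} (the $\kappa_{\ga}$ should multiply both terms, not divide the first), and the energy estimate on the glued test function is not carried out here but simply quoted from \cite[Proposition~4.5]{KMW}; otherwise your outline matches the paper's proof.
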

\begin{rmk}
(1) Suppose that $\ga = 1$ and either $n \le 7$ or $M$ is locally conformally flat.
In this situation, the positive mass theorem of Schoen and Yau \cite{SY1, SY2} implies that $A \ge 0$,
and the condition $A > 0$ holds if and only if $M$ is not conformally equivalent to the standard sphere $\mathbb{S}^n$.
Currently, formulating and proving an analogue of the positive mass theorem for the case $\ga \in (0,1)$ is left as a challenging open problem.

\medskip \noindent (2) Condition \textnormal{(E$^{\prime}$)} is automatically satisfied if $n = 2$ and $\ga \in [\frac{1}{2}, 1)$.

\medskip \noindent (3) The flatness condition on $f$ in (E) and (E$^{\prime}$) corresponds to the results of Jin, Li and Xiong \cite[Theorems 1.2, 1.3]{JLX}
for the fractional Nirenberg problem on the standard $n$-dimensional unit sphere.
\end{rmk}

The proof of Theorems \ref{thm-ps}, \ref{thm-ps-end}, \ref{thm-ps-PE} and \ref{thm-Y-end} is based on the constrained minimization technique
and the Chang-Gonz\'alez extension theorem stated in Proposition \ref{prop-GQ-ext}.
In particular, the statement in the above theorems that `the $\gamma$-Yamabe problem or the prescribed $\ga$-scalar curvature problem is solvable'
actually means the existence of a weak solution to the equation
\begin{equation}\label{eq-ps-ext}
\begin{cases}
-\textnormal{div}_{\bg} (\rho^{1-2\ga} \nabla U) + E(\rho)U = 0 \quad \text{in } (X, \bg),\\
U > 0 \quad \text{in } \ox, \quad U = u \quad \text{on } M,\\
\pa^{\ga}_{\nu} U = - \kappa_{\ga} \(\lim\limits_{\rho \to 0+} \rho^{1-2\ga} \dfrac{\pa U}{\pa \rho}\) = f u^{n+2\ga \over n-2\ga} \quad \text{on } M
\end{cases}
\end{equation}
that belongs the weighted Sobolev space $\hx$, a space defined in the next paragraph to Proposition \ref{prop-GQ-ext}.
The following regularity theorem assures that it is a classical solution to \eqref{eq-ps} under the general condition $H = 0$ on $M$.
\begin{thm}\label{thm-reg}
Suppose that $\ga \in (0,1)$, \eqref{eq-eig} holds, $\Lambda^{\ga}(M, [\hh]) > 0$, $f \in C^{\infty}(M)$ and the mean curvature $H$ on $(M, \hh) \subset (\ox, \bg)$ vanishes.
Then the trace $u$ on $M$ of each weak solution $U \in \hx$ to \eqref{eq-ps-ext} is contained in the space $C^{\infty}(M)$
and so solves \eqref{eq-ps} in a classical sense.
\end{thm}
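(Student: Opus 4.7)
The plan is to proceed in three stages: first upgrade the trace $u$ to $L^{\infty}(M)$, then to $C^{0,\alpha}(M)$, and finally bootstrap via Schauder-type estimates for the degenerate extension problem \eqref{eq-ps-ext} to reach $C^{\infty}(M)$.

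First, since $U \in \hx$, the weighted trace embedding theorem gives $u \in L^{2n/(n-2\ga)}(M)$. To go further I would run a Brezis--Kato iteration adapted to the boundary value problem \eqref{eq-ps-ext}: test the equation against a cutoff of $U|U|^{2(q-1)}$, integrate by parts so that the Neumann datum $fu^{(n+2\ga)/(n-2\ga)}$ appears on $M$, and combine the weighted Sobolev inequality of Fabes--Kenig--Serapioni (valid since $\rho^{1-2\ga}$ is a Muckenhoupt $A_2$ weight) with the weighted trace inequality. The spectral assumption \eqref{eq-eig} together with $\Lambda^{\ga}(M,[\hh]) > 0$ yields coercivity of the underlying bilinear form, so the iteration closes and produces $u \in L^p(M)$ for every $p < \infty$; a final Moser step (or a De Giorgi truncation) then promotes this to $u \in L^{\infty}(M)$.

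Second, with $u$ bounded the Neumann datum is bounded, and standard interior H\"older regularity for degenerate elliptic equations with $A_2$ weights (Fabes--Kenig--Serapioni) applies in $X$. For boundary regularity I would read the equation in Fermi coordinates around any $y \in M$: thanks to the assumption $H=0$ on $M$ and the normal form $\bg = d\rho^2 + h_{\rho}$ used already in Theorem \ref{thm-Y}, the equation becomes a perturbation of the model degenerate problem $-\mathrm{div}(x_N^{1-2\ga}\nabla U) = 0$ in $\mr^N_+$, so the boundary H\"older estimates of Caffarelli--Silvestre, Cabr\'e--Sire and Jin--Li--Xiong give $U \in C^{0,\alpha}(\ox)$, and in particular $u \in C^{0,\alpha}(M)$.

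Finally, I would iterate Schauder-type estimates for the extension. Jin--Li--Xiong's regularity theorem for the fractional extension (equivalently, Schauder estimates for $(-\Delta)^{\ga}$-type problems with H\"older right-hand side) gives the gain $u \in C^{k,\alpha} \Rightarrow u \in C^{k+2\ga,\alpha}$ in the usual rounded sense, provided the Neumann datum inherits the corresponding H\"older regularity. The strong maximum principle applied to \eqref{eq-ps-ext} ensures $u > 0$ on $M$, and since $f \in C^{\infty}(M)$, the composition $fu^{(n+2\ga)/(n-2\ga)}$ gains regularity at each step, delivering $u \in C^{\infty}(M)$ and hence a classical solution to \eqref{eq-ps}.

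I expect the main obstacle to be the first stage: closing the Brezis--Kato iteration in the weighted/boundary setting requires carefully combining the weighted Sobolev embedding and the weighted trace inequality so that each iteration gains a uniform factor, while simultaneously absorbing the $E(\rho)$ term (which is only integrable against $\hx$-functions because of \eqref{eq-eig}) and verifying that the truncated test functions are admissible in $\hx$. A secondary subtlety, in the third stage, is to pin down how the hypothesis $H=0$ enters the expansion of $h_{\rho}$ so that the residual coefficients fall into the H\"older class required by the Jin--Li--Xiong Schauder theory.
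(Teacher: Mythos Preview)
Your three-stage outline matches the paper's architecture, and stages 1--2 are essentially what the paper carries out in Lemma~\ref{lemma-reg-1} (a Moser/Brezis--Kato iteration followed by De Giorgi--Nash--Moser H\"older regularity for the degenerate extension). The genuine gap is in stage 3. You invoke the Jin--Li--Xiong Schauder theory to bootstrap, but those estimates---and likewise the ones in Cabr\'e--Sire, Fall--Felli, Caffarelli--Stinga---are proved for extensions in which the coefficient multiplying $\partial_N^2$ (equivalently $A_{NN}$ in the divergence form $-\mathrm{div}(x_N^{1-2\ga}A\nabla U)$) is constant. In Fermi coordinates on $(\ox,\bg)$ one has $A_{NN}=\sqrt{|\bg|}$, and even under $H=0$ this varies in the tangential variable $\bar x$ because $\sqrt{|\bg|}(\bar x,0)=\sqrt{|\hh|}(\bar x)$. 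When you take a tangential difference quotient of the equation, the inhomogeneous term picks up $(D_hA_{NN})\,\partial_N U$; since $\partial_N U\sim x_N^{2\ga-1}$ near the boundary and $D_hA_{NN}$ does not vanish at $x_N=0$, this term is too singular to lie in the weighted $L^{q_1}$ space needed to re-apply the H\"older estimate and close the iteration. This is precisely the obstruction the paper flags in the paragraph following the statement of Theorem~\ref{thm-reg}, and your phrase ``perturbation of the model degenerate problem'' does not dissolve it.

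The paper's resolution is not an improved Schauder theory but a change of gauge: use the conformal covariance~\eqref{eq-cc} together with Cao--G\"unther conformal normal coordinates to replace $\hh$ by $\hh_y\in[\hh]$ with $|\hh_y|\equiv 1$ near $y$. Then $A_{NN}(\bar x,0)\equiv 1$, hence $D_hA_{NN}=O(x_N)$, and the dangerous term becomes $x_N\,\partial_N U$, which is shown to be bounded by a simple rescaling argument (Lemma~\ref{lemma-reg-2} and Corollary~\ref{cor-reg-2}). Incidentally, the hypothesis $H=0$ is used not for this step but to guarantee that $E(\rho)=x_N^{1-2\ga}B$ with $B$ smooth (Remark~\ref{rmk-E}), so that the zeroth-order term fits the framework; your proposal locates the role of $H=0$ in the wrong place.
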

\noindent This result has been regarded to be true since the study on the fractional Yamabe problem was initiated.
See, for example, \cite[Proposition 3.2]{GQ}.
However, we need to pay more attention to it since the component $\bg_{(n+1)(n+1)}$ of our metric $\bg$ on $\ox$ may not be constant;
compare with the settings of \cite{CaS, JLX, FF, CSt} where regularity results of nonlocal problems were obtained.
Indeed, due to this fact, the equation of a difference quotient of $U$ with respect to a tangential direction to $M$ (see \eqref{eq-reg-21})
may have an inhomogeneous term involving the normal derivative $\pa_\rho U$ of $U$, which is not easy to handle directly.
To bypass this issue, we shall pick a representative of the conformal class $[\hh]$ on $M$
such that the exponential map at some point is a local volume preserving map, whose existence is guaranteed by Cao \cite{Ca} and G\"unther \cite{Gun}.
Then it will suffice to control $\rho \pa_{\rho} U$ instead of $\pa_{\rho} U$, which is a rather simple task owing to the scaling property of \eqref{eq-ps-ext}.

\medskip
The paper is organized as follows. In Section \ref{sec-pre}, we introduce background materials such as the Chang-Gonz\'alez extension theorem,
the weighted Sobolev space $\hx$ and the fractional Yamabe constant $\Lambda^{\ga}(M, [\hh])$.
In Sections \ref{sec-ps-cri} and \ref{sec-ps-ex}, we derive a sufficient condition to ensure the existence of a solution to \eqref{eq-ps}
and describe several situations when the condition holds, proving Theorems \ref{thm-ps}, \ref{thm-ps-end} and \ref{thm-ps-PE}.
Section \ref{sec-reg} is devoted to regularity of solutions to \eqref{eq-ps}, and especially, the proof of Theorem \ref{thm-reg}.
Finally, in Appendix \ref{sec-Y}, we prove Theorem \ref{thm-Y-end} by employing various arguments from the papers \cite{GQ, KMW, Ma, Al} on the fractional and boundary Yamabe problem.

\medskip \noindent \textbf{Notations.}

\medskip \noindent - We use $2^* := \frac{n+2\ga}{n-2\ga}$ and Einstein summation convention throughout the paper.

\medskip \noindent - We always assume that $1 \le i, j, k, l \le n$ and $1 \le a, b \le N := n+1$.

\medskip \noindent - $C > 0$ is a generic constant which can vary from line to line.

\medskip \noindent - For any $x \in \overline{\mr^N_+}$ and $r > 0$, we set by $B^N(x,r)$ the $N$-dimensional open ball of radius $r$ and center $x$, and $B^N_+(x,r) = B^N(x,r) \cap \overline{\mr^N_+}$.

\medskip \noindent - Let $C^{\alpha}(\Omega) = C^{\lfloor \alpha \rfloor, \alpha - \lfloor \alpha \rfloor}(\Omega)$ for a number $0 < \alpha \notin \mn$ and a subset $\Omega$ of $\mr^n$.
Here $\lfloor \alpha \rfloor$ denotes the largest integer that does not exceed $\alpha$.

\section{Preliminaries}\label{sec-pre}
\subsection{Extension results, functional spaces and the fractional Yamabe constant}
Let $n > 2\ga$, $\ga \in (0,1)$, $(X^{n+1}, g^+)$ be an asymptotically hyperbolic manifold, $(M^n, [\hh])$ its conformal infinity
and $\rho$ a geodesic defining function of $(M, \hh)$.
Then $\bg = \rho^2 g^+$ becomes a smooth metric on $\ox$.
We also write $P^{\ga}[g^+, \hh]$ to denote the fractional conformal Laplacian on $(M, \hh)$.
Since the metric $g^+$ in $X$ is always fixed, we will use a simplified notation $P^{\ga}_{\hh} = P^{\ga}[g^+, \hh]$.

\medskip
We first recall the local interpretation of the operator $P^{\ga}_{\hh}$. To this end, we need to introduce a space
\begin{equation}\label{eq-mcd}
\mcd := \left\{ U = u + v \rho^{2\ga} + o(\rho^{2\ga}) \in C^{\infty}(X) \cap C^0(\ox) : u, v \in C^{\infty}(M) \right\}.
\end{equation}

\begin{propx}[Chang, Gonz\'alez \cite{CG}]\label{prop-GQ-ext}
Let $H$ be the mean curvature on $(M, \hh) \subset (\ox, \bg)$. Define
\[E(\rho) = \rho^{-1-s} (-\Delta_{g^+} - s(n-s)) \rho^{n-s} \quad \text{in } X \quad \text{where } s := {n \over 2} + \ga.\]
Then it can be shown to be \begin{equation}\label{eq-E}
E(\rho) = - \({n-2\ga \over 2}\) \({\pa_{\rho} \sqrt{|\bg|} \over \sqrt{|\bg|}}\) \rho^{-2\ga} \quad \text{in } M \times (0,r_0)
\end{equation}
for some small $r_0 > 0$; refer to \cite[Remark 2.2]{CK} for its derivation.
Suppose that \eqref{eq-eig} holds, $H = 0$ on $M$ for $\ga \in (\frac{1}{2}, 1)$ and a function $u \in C^{\infty}(M)$ is given.

\medskip \noindent \textnormal{(1)} Let $U \in \mcd$ be a solution to
\begin{equation}\label{eq-ext-1}
\begin{cases}
-\textnormal{div}_{\bg} (\rho^{1-2\ga} \nabla U) + E(\rho)U = 0 &\text{in } (X, \bg),\\
U = u &\text{on } M
\end{cases}
\end{equation}
whose unique existence is guaranteed in \cite{JS, GZ, CG}. Then it holds that
\begin{equation}\label{eq-ext-2}
\pa^{\ga}_{\nu} U = - \kappa_{\ga} \(\lim\limits_{\rho \to 0+} \rho^{1-2\ga} \dfrac{\pa U}{\pa \rho}\) =  \begin{cases}
P_{\hh}^{\ga} u &\text{for } \ga \in (0,1) \setminus \left\{ \dfrac{1}{2} \right\},\\
P_{\hh}^{\ga} u - \(\dfrac{n-1}{2}\) Hu &\text{for } \ga = \dfrac{1}{2}.
\end{cases}
\end{equation}

\medskip \noindent \textnormal{(2)} There is a defining function $\rho^*$ such that
\begin{equation}\label{eq-adaptive}
E(\rho^*) = 0 \text{ in } X \quad \text{and} \quad \rho^*(\rho) = \rho (1+O(\rho^{2\ga})) \text{ near } M,
\end{equation}
which is called adaptive. In addition, if we denote $\wtu = (\rho/\rho^*)^{n-2\ga \over 2} U \in \mcd$, then it solves
\begin{equation}\label{eq-ext-3}
\begin{cases}
-\textnormal{div}_{\bg^*} \((\rho^*)^{1-2\ga} \nabla \wtu\) = 0 \quad \text{in } (X, \bg^*),\\
\pa^{\ga}_{\nu} \wtu = - \kappa_{\ga} \(\lim\limits_{\rho^* \to 0+} (\rho^*)^{1-2\ga} \dfrac{\pa \wtu}{\pa \rho^*}\) = P_{\hh}^{\ga} u - Q^{\ga}_{\hh} u \quad \text{on } M
\end{cases}
\end{equation}
where $\bg^* := (\rho^*)^2 g^+$ and $Q^{\ga}_{\hh}$ is the fractional scalar curvature.
\end{propx}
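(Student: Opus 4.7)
The plan is to derive both parts from the scattering-theoretic definition of $P^\ga_\hh$ together with a careful change of unknown. For part (1), I would start from the Graham--Zworski setup: given $u \in C^\infty(M)$, solve $(-\Delta_{g^+} - s(n-s)) V = 0$ in $X$ with an asymptotic expansion $V = F \rho^{n-s} + G \rho^s$ near $M$ satisfying $F|_M = u$, after which $P^\ga_\hh u$ is a fixed scattering multiple of $G|_M$. Since $s - (n-s) = 2\ga$, substituting $V = \rho^{n-s} U$ converts this setup exactly into the class $U \in \mcd$ with $U = u + v \rho^{2\ga} + o(\rho^{2\ga})$. Using $g^+ = \rho^{-2} \bg$ and the conformal transformation law of the Laplacian, the equation for $V$ rewrites in divergence form as in \eqref{eq-ext-1}, and a direct expansion of the zeroth-order coefficient in geodesic normal coordinates near $M$ produces $E(\rho)$ in the form \eqref{eq-E}.

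For the boundary identity \eqref{eq-ext-2}, the expansion of $U$ yields $\rho^{1-2\ga} \pa_\rho U \to 2\ga\, v|_M$ as $\rho \to 0+$, so the weighted normal derivative extracts $v|_M$. The constant $\kappa_\ga$ then arises from matching this quantity with the fixed scattering multiple that defines $P^\ga_\hh u$, which proves the identity for generic $\ga$. In the critical case $\ga = 1/2$ the exponents $s$ and $n-s$ differ by the integer $1$, and the formal series admits a logarithmic obstruction whose coefficient is determined by the boundary geometry; carrying the scattering expansion one order further (or reading off the obstruction from \cite{GZ}) identifies it as $-((n-1)/2) H u$, yielding the stated correction. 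The hypothesis $H = 0$ for $\ga \in (1/2, 1)$ is then precisely what ensures that the expansion of $U$ remains within $\mcd$ without a logarithmic term.

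For part (2), I would construct $\rho^*$ via the ansatz $\rho^* = \rho \varphi$ with $\varphi|_M = 1$ and impose $E(\rho^*) = 0$ through \eqref{eq-E}; since $E(\rho^*)$ is essentially the logarithmic normal derivative of $\sqrt{|\bg^*|}$, this reduces to a first-order ODE in $\rho$ for $\log \varphi$ whose unique smooth solution has the form $\varphi = 1 + O(\rho^{2\ga})$, giving \eqref{eq-adaptive}. Setting $\wtu = (\rho/\rho^*)^{n-s} U$, equivalently $\wtu = (\rho^*)^{-(n-s)} V$, and repeating the derivation of part (1) with $\rho^*$ and $\bg^*$ in place of $\rho$ and $\bg$ yields the divergence-form equation of \eqref{eq-ext-3}, whose zeroth-order term now vanishes by construction. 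To fix the shift in the Neumann condition I would test against $u \equiv 1$: part (1) gives the corresponding $U_1$ with $\pa^\ga_\nu U_1 = P^\ga_\hh(1) = Q^\ga_\hh$, while by construction $\wtu_1 = (\rho/\rho^*)^{n-s} U_1$ has boundary value $1$ and solves the homogeneous divergence equation; linearity in $u$ therefore forces the shift for general $u$ to be exactly $-Q^\ga_\hh u$, completing \eqref{eq-ext-3}.

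The main obstacle I expect is bookkeeping: the conformal-geometric derivation of $E(\rho)$ and the divergence-form reduction both demand careful expansion of $\bg$ in geodesic normal coordinates near $M$, and the critical case $\ga = 1/2$ must be treated separately because the scattering expansion degenerates and the mean curvature appears as the logarithmic obstruction. A secondary delicate point is controlling the ODE for $\varphi$ across the singular power $\rho^{2\ga}$ so that $\rho^*$ is genuinely smooth on $\ox$ near $M$ and admits the asymptotic \eqref{eq-adaptive}, rather than merely existing formally.
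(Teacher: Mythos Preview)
The paper does not supply its own proof of this proposition: it is stated as a result of Chang and Gonz\'alez, with the derivation of \eqref{eq-E} deferred to \cite[Remark~2.2]{CK} and the existence/uniqueness of $U$ to \cite{JS, GZ, CG}. So there is no in-paper argument to compare against; your sketch is effectively a reconstruction of the Chang--Gonz\'alez argument, and on the whole it follows the standard route correctly (scattering solution $V$, substitution $U = \rho^{-(n-s)}V$, conformal rewriting into divergence form, and the adapted defining function $\rho^*$ characterized by $(-\Delta_{g^+} - s(n-s))(\rho^*)^{n-s} = 0$).

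There is one genuine gap in your treatment of the Neumann shift in part (2). Your final step reads: ``linearity in $u$ therefore forces the shift for general $u$ to be exactly $-Q^\ga_{\hh} u$.'' Linearity alone only tells you that $u \mapsto \pa^\ga_\nu \wtu - P^\ga_{\hh} u$ is a linear operator on $C^\infty(M)$; it does not by itself identify that operator as multiplication by a fixed function. What is missing is the observation that the shift arises entirely from the factor $(\rho/\rho^*)^{n-s} = 1 - (n-s)\,c(\bx)\rho^{2\ga} + o(\rho^{2\ga})$, where $c$ depends only on the expansion of $\rho^*$ and not on $u$. Writing $\wtu = u + (v - (n-s)cu)(\rho^*)^{2\ga} + o((\rho^*)^{2\ga})$ shows that the shift is multiplication by a single function of $\bx$; only then does testing against $u \equiv 1$ (where, as you note, $\wtu_1 \equiv 1$ because $(\rho^*)^{n-s}$ is itself the scattering solution with boundary datum $1$) pin that function down as $Q^\ga_{\hh}$. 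With this clarification your outline is complete.
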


Define the {\it weighted Sobolev space} $\hx$ with weight $\rho^{1-2\ga}$ as the completion of the space $\mcd$ in \eqref{eq-mcd} with respect to the norm
\begin{equation}\label{eq-norm}
\|U\|_{\hx} := \(\int_X \rho^{1-2\ga} (|\nabla U|_{\bg}^2 + U^2) dv_{\bg}\)^{\frac{1}{2}}.
\end{equation}
By \eqref{eq-adaptive}, $H^{1,2}(X; (\rho^*)^{1-2\ga})$ is a Hilbert space equivalent to $\hx$.

In addition, for any element $U \in \hx$, we set the energy $I^{\ga}(U)$ associated to \eqref{eq-ext-1} and \eqref{eq-ext-2} by
\begin{equation}\label{eq-I^ga}
\begin{aligned}
&\ I^{\ga}(U) \\
&= \begin{cases}
\kappa_{\ga} \int_X (\rho^{1-2\ga} |\nabla U|_{\bg}^2 + E(\rho) U^2) dv_{\bg} &\text{for } \ga \in (0,1) \setminus \left\{ \dfrac{1}{2} \right\},\\
\kappa_{\ga} \int_X (\rho^{1-2\ga} |\nabla U|_{\bg}^2 + E(\rho) U^2) dv_{\bg} + \(\dfrac{n-1}{2}\) \int_M HU^2 dv_{\hh} &\text{for } \ga = \dfrac{1}{2},
\end{cases}
\end{aligned}
\end{equation}
and the energy $J^{\ga}(U)$ associated to \eqref{eq-ext-3} by
\begin{equation}\label{eq-J^ga}
J^{\ga}(U) = \kappa_{\ga} \int_X (\rho^*)^{1-2\ga} |\nabla U|_{\bg}^2 \, dv_{\bg} + \int_M Q^{\ga}_{\hh} U^2 dv_{\hh}.
\end{equation}

Let $H^{\ga}(M)$ be the fractional Sobolev space realized as the space of the traces of functions in $\hx$.
Then it holds that $H^{\ga}(M) \hookrightarrow L^{2^*+1}(M)$ and the {\it fractional Yamabe constant}
\begin{equation}\label{eq-Y-const}
\Lambda^{\ga}(M, [\hh]) = \inf_{u \in H^{\ga}(M) \setminus \{0\}} {\int_M u P^{\ga}_{\hh} u dv_{\hh} \over (\int_M |u|^{2^*+1} dv_{\hh})^{n-2\ga \over n}}
\end{equation}
is well-defined. If we also set
\begin{equation}\label{eq-Y-const-1}
\ola^{\ga}(X, [\hh]) =
\inf_{U \in \hx \setminus \{0\}} {I^{\ga}(U) \over (\int_M |U|^{2^*+1} dv_{\hh})^{n-2\ga \over n}}
\end{equation}
and
\[\wtla^{\ga}(X, [\hh]) =
\inf_{U \in \hx \setminus \{0\}} {J^{\ga}(U) \over (\int_M |U|^{2^*+1} dv_{\hh})^{n-2\ga \over n}},\]
then the result of Case \cite{Cs} shows that
\begin{equation}\label{eq-Y-const-2}
\Lambda^{\ga}(M, [\hh]) = \ola^{\ga}(X, [\hh]) = \wtla^{\ga}(X, [\hh]) > -\infty
\end{equation}
under the validity of \eqref{eq-eig}. We remark that only Poincar\'e-Einstein manifolds were treated in \cite{Cs},
but the arguments in it can be generalized to arbitrary asymptotically hyperbolic manifolds as far as it holds that $H = 0$ for $\ga \in (\frac{1}{2}, 1)$.

\medskip
Fix any $\ga \in (0,1) \setminus \{\frac{1}{2}\}$ and assume that $(M, [\hh])$ is a conformal infinity with positive fractional Yamabe constant.
In the remaining part of this subsection, we briefly explain how to extend the operator $P_{\hh}^{\ga}: C^{\infty}(M) \to C^{\infty}(M)$
to $P_{\hh}^{\ga}: H^{\ga}(M) \to H^{-\ga}(M) := (H^{\ga}(M))^*$.

\begin{lemma}\label{lemma-norm}
If $\ga \in (0,1) \setminus \{\frac{1}{2}\}$, \eqref{eq-eig} holds, $\Lambda^{\ga}(M, [\hh]) > 0$, and $H = 0$ on $M$ for $\ga \in (\frac{1}{2}, 1)$,
then $I^{\ga}(U) \ge 0$ for all $U \in \hx$.
Furthermore, $\|U\|_* := \sqrt{I^{\ga}(U)}$ serves as an equivalent norm to the standard $\hx$-norm introduced in \eqref{eq-norm}.
\end{lemma}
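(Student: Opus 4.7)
The plan is to split any $U \in \hx$ into a ``harmonic extension'' part and a zero-trace remainder, then reduce both non-negativity of $I^\ga$ and the norm equivalence to two separate statements. Given $U \in \hx$ with trace $u := U|_M \in H^\ga(M)$, let $U_u \in \hx$ denote the unique solution of \eqref{eq-ext-1} with boundary value $u$, whose existence is supplied by Proposition \ref{prop-GQ-ext}, and set $W := U - U_u$, so that $W|_M = 0$. Testing the PDE for $U_u$ against $W$ and integrating by parts kills the boundary contribution because $W|_M = 0$, yielding the orthogonal-type decomposition
\begin{equation*}
I^\ga(U) = I^\ga(U_u) + I^\ga(W).
\end{equation*}
Testing the same PDE against $U_u$ itself and invoking \eqref{eq-ext-2} gives $I^\ga(U_u) = \int_M u\, P^\ga_\hh u\, dv_\hh$, which is non-negative by $\Lambda^\ga(M,[\hh]) > 0$ and \eqref{eq-Y-const}.

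For the interior piece I will show $I^\ga(W) \ge 0$ whenever $W|_M = 0$, using the spectral assumption \eqref{eq-eig}. Setting $s := n/2 + \ga$, so that $s(n-s) = n^2/4 - \ga^2$, the conjugation identity $E(\rho) = \rho^{-1-s}(-\Delta_{g^+} - s(n-s))\rho^{n-s}$ from Proposition \ref{prop-GQ-ext} allows me to rewrite $I^\ga(W)$, via the substitution $W = \rho^{n-s}\phi$ and integration by parts on $(X, g^+)$, as the quadratic form of $-\Delta_{g^+} - s(n-s)$ tested against $\phi$; since $\phi$ vanishes on $M$ and $\lambda_1(-\Delta_{g^+}) > s(n-s)$ by \eqref{eq-eig}, this form is strictly positive. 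Combined with the splitting above this proves $I^\ga(U) \ge 0$; this is essentially the argument of Case \cite{Cs}, which carries over here because either $\ga \ne \frac{1}{2}$ (so no $H$-term appears in $I^\ga$) or $H = 0$ on $M$ when $\ga \in (\frac{1}{2},1)$.

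For the norm equivalence, $|E(\rho)| \le C\rho^{-2\ga}$ in a collar of $M$ combined with a weighted Hardy inequality controlling $\int_X \rho^{-2\ga}U^2\, dv_{\bg}$ by $\|U\|_{\hx}^2$ yields the easy bound $\|U\|_*^2 \le C\|U\|_{\hx}^2$. For the reverse inequality, I will upgrade the argument of the previous paragraph to a quantitative coercivity $\|W\|_{\hx}^2 \le C\, I^\ga(W)$ on zero-trace functions using the strict gap in \eqref{eq-eig}, and combine it with the elliptic estimate $\|U_u\|_{\hx} \le C\|u\|_{H^\ga(M)}$ together with the norm equivalence $\|u\|_{H^\ga(M)}^2 \le C\int_M u P^\ga_\hh u\, dv_\hh$ inherited from $\Lambda^\ga(M,[\hh]) > 0$ and the ellipticity of $P^\ga_\hh$ (cf.\ \eqref{eq-sym}). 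Since the splitting yields $I^\ga(U_u) \le I^\ga(U)$ and $I^\ga(W) \le I^\ga(U)$, a triangle inequality in $\hx$ then produces $\|U\|_{\hx}^2 \le C\|U\|_*^2$. The hardest step is precisely this quantitative coercivity: extracting uniform control of both the gradient term and the $L^2(\rho^{1-2\ga})$ term from the strict but unquantified spectral gap in \eqref{eq-eig} requires a careful weighted Hardy-type estimate to handle the degenerate weight $\rho^{1-2\ga}$ and the singular potential $E(\rho)$ near $M$.
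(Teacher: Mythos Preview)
Your outline is sound and essentially reconstructs what the paper merely cites: the paper's own proof is a two-line pointer to \eqref{eq-Y-const-1}--\eqref{eq-Y-const-2} (i.e., Case's energy inequality) for non-negativity and to \cite[Lemma~3.4]{CDK} for the norm equivalence, whereas you unpack those references via the harmonic-extension/zero-trace decomposition $U = U_u + W$, the identity $I^\ga(U_u) = \int_M u\,P^\ga_\hh u\,dv_\hh$, and the spectral-gap argument on the zero-trace piece. So the substance is the same; your version is simply more self-contained.

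One caution on logical order: you produce $U_u$ for a general trace $u \in H^\ga(M)$ by appealing to Proposition~\ref{prop-GQ-ext}, but that proposition is stated only for $u \in C^\infty(M)$; the extension to weak boundary data is Lemma~\ref{lemma-GQ-ext}, which in the paper sits \emph{after} the present lemma and in fact invokes it. To avoid circularity, run your decomposition argument first for $U \in \mcd$ (where the trace is smooth and Proposition~\ref{prop-GQ-ext} applies), establish $I^\ga(U) \ge 0$ and the easy bound $\|U\|_*^2 \le C\|U\|_{\hx}^2$ there, and only then pass to arbitrary $U \in \hx$ by density. The same density step handles the substitution $W = \rho^{n-s}\phi$ in the zero-trace part, where the integrations by parts against $g^+$ are a priori only justified for sufficiently regular $W$.
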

\begin{proof}
The first claim trivially comes from \eqref{eq-Y-const-1} and \eqref{eq-Y-const-2}.
The proof of the second claim can be found in \cite[Lemma 3.4]{CDK}.
\end{proof}
\begin{lemma}\label{lemma-GQ-ext}
Given $\ga \in (0,1) \setminus \{\frac{1}{2}\}$, the operator $P_{\hh}^{\ga}: H^{\ga}(M) \to H^{-\ga}(M)$ is well-defined and bounded.
Moreover, if \eqref{eq-eig} holds, $\Lambda^{\ga}(M, [\hh]) > 0$, and $H = 0$ on $M$ for $\ga \in (\frac{1}{2}, 1)$,
then for any $u \in H^{\ga}(M)$, there exists a unique function $U \in \hx$ such that \eqref{eq-ext-1} and \eqref{eq-ext-2} are valid in weak sense.
\end{lemma}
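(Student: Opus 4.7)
The argument splits naturally into boundedness of $P^{\ga}_{\hh}$ and well-posedness of the extension problem.

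For \textbf{boundedness}, I would proceed by density from $C^{\infty}(M)$. Given $u, v \in C^{\infty}(M)$ with Dirichlet extensions $U, V \in \mcd$ supplied by Proposition 2.1(1), multiplying \eqref{eq-ext-1} for $U$ by $V$ and integrating by parts over $(X, \bg)$ (noting $\ga \ne \tfrac{1}{2}$, so \eqref{eq-ext-2} carries no $H$-correction) yields the representation
\begin{equation*}
\int_M (P^{\ga}_{\hh} u)\, v\, dv_{\hh} \;=\; \kappa_{\ga} \int_X \bigl[\rho^{1-2\ga}\, \nabla U \cdot \nabla V + E(\rho)\, U V\bigr] dv_{\bg}.
\end{equation*}
The right-hand side defines a continuous bilinear form on $\hx \times \hx$: the gradient term is immediate from \eqref{eq-norm}, while the zeroth-order term is controlled by splitting into a boundary collar, where \eqref{eq-E} together with a weighted Hardy inequality for the Muckenhoupt $A_2$-weight $\rho^{1-2\ga}$ applies, and the interior, where $E$ is smooth and $\rho$ is bounded below. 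Since the trace map $\mathrm{tr}: \hx \to H^{\ga}(M)$ is surjective with a bounded right inverse furnished by the Dirichlet extension itself, one has $\|U\|_{\hx} \le C \|u\|_{H^{\ga}(M)}$ and similarly for $V$. Combining these ingredients gives $|\langle P^{\ga}_{\hh} u, v\rangle_{L^2(M)}| \le C \|u\|_{H^{\ga}(M)} \|v\|_{H^{\ga}(M)}$, and density of $C^{\infty}(M)$ in $H^{\ga}(M)$ extends $P^{\ga}_{\hh}$ to a bounded operator $H^{\ga}(M) \to H^{-\ga}(M)$.

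For \textbf{existence and uniqueness} under the standing hypotheses, I would apply the direct method of the calculus of variations to $I^{\ga}$ over the affine set $\mathcal{A}_u := \{U \in \hx : \mathrm{tr}(U) = u\}$, which is nonempty by surjectivity of the trace and closed by its continuity. Lemma 2.2 provides the equivalence $I^{\ga}(\cdot) \simeq \|\cdot\|^2_{\hx}$, rendering $I^{\ga}$ coercive, strictly convex, and weakly lower semicontinuous on $\mathcal{A}_u$; the direct method then delivers a unique minimizer $U$. Its Euler--Lagrange equation reads
\begin{equation*}
\kappa_{\ga} \int_X \bigl[\rho^{1-2\ga}\, \nabla U \cdot \nabla \Phi + E(\rho)\, U\, \Phi\bigr] dv_{\bg} = 0 \quad \text{for every } \Phi \in \hx \text{ with } \mathrm{tr}(\Phi) = 0,
\end{equation*}
which is precisely the weak form of \eqref{eq-ext-1}, while $\mathrm{tr}(U) = u$ is encoded in $\mathcal{A}_u$ and the Neumann identity \eqref{eq-ext-2} is recovered as the duality pairing defining $P^{\ga}_{\hh} u$. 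Equivalently, writing $W := U - U_0$ for any fixed lift $U_0 \in \mathcal{A}_u$ and applying Lax--Milgram to the coercive form on the zero-trace subspace gives the same conclusion.

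The \textbf{principal technical obstacle} is continuity of the zeroth-order form $\int_X E(\rho)\, UV\, dv_{\bg}$ on $\hx \times \hx$, since \eqref{eq-E} shows $E(\rho) \sim \rho^{-2\ga}$ as $\rho \to 0+$, which is one power of $\rho$ more singular than the weight built into $\|\cdot\|_{\hx}$. This is resolved by the weighted Hardy-type bound
\begin{equation*}
\int_X \rho^{-2\ga}\, U^2\, dv_{\bg} \le C\, \|U\|_{\hx}^2,
\end{equation*}
which follows from the one-dimensional Hardy inequality in the boundary collar coordinate combined with a cutoff separating the interior. Once this estimate is established, everything else reduces to standard Hilbert-space functional analysis, with the only additional care needed being the surjectivity of the trace operator in the weighted setting, which is supplied by the Dirichlet extension from Proposition 2.1.
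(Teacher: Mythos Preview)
Your variational treatment of the second assertion is sound and, if anything, more direct than the paper's: the paper approximates $u$ by smooth $u_m$, invokes Proposition \ref{prop-GQ-ext} to obtain the smooth extensions $U_m$, derives a uniform $\hx$-bound from Lemma \ref{lemma-norm} together with the already-established boundedness of $P^{\ga}_{\hh}$, and passes to a weak limit; you instead minimize $I^{\ga}$ over $\mathcal{A}_u$ using Lemma \ref{lemma-norm} for coercivity. Both routes rest on the same norm equivalence, and yours avoids the approximation step (though you should note that for $\ga\in(\tfrac12,1)$ the continuity of the bilinear form comes from $H=0$ and Remark \ref{rmk-E}, which gives $|E(\rho)|\le C\rho^{1-2\ga}$, rather than from Hardy).

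For the first assertion there is a genuine gap. Your claimed Hardy-type bound $\int_X \rho^{-2\ga} U^2\,dv_{\bg}\le C\|U\|_{\hx}^2$ is false for $\ga\in(\tfrac12,1)$: when $U$ has nonzero trace $u$ on $M$, the collar contribution contains $\int_0^{r_0}\rho^{-2\ga}\,d\rho\cdot\int_M u^2\,dv_{\hh}$, and $\rho^{-2\ga}$ is not integrable near $0$ once $2\ga>1$. Relatedly, Proposition \ref{prop-GQ-ext} itself requires the eigenvalue hypothesis \eqref{eq-eig} and $H=0$ for $\ga>\tfrac12$, none of which are assumed in the boundedness clause of the lemma; and invoking the Dirichlet extension as a \emph{bounded} right inverse of the trace is circular before coercivity is in hand. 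The paper sidesteps all of this by a one-line argument that does not touch the extension: it appeals to \eqref{eq-sym}, which says $P^{\ga}_{\hh}$ and $(-\Delta_{\hh})^{\ga}$ share the same principal symbol, so $P^{\ga}_{\hh}$ is a pseudodifferential operator of order $2\ga$ on the compact manifold $M$ and is therefore bounded $H^{\ga}(M)\to H^{-\ga}(M)$ unconditionally.
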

\begin{proof}
Given any fixed element $u_0 \in H^{\ga}(M)$, let $\{u_m\}_{m \in \mn} \subset C^{\infty}(M)$ such that $u_m \to u_0$ in $H^{\ga}(M)$.
For each $v \in H^{\ga}(M)$, we define
\[\int_M (P_{\hh}^{\ga} u_0) v\, dv_{\hh} := \lim_{m \to \infty} \int_M (P_{\hh}^{\ga} u_m) v\, dv_{\hh}.\]
Then \eqref{eq-sym} tells us that it is well-defined and in fact independent of a choice of $\{u_m\}_{m \in \mn}$.
Also, $P_{\hh}^{\ga}: H^{\ga}(M) \to H^{-\ga}(M)$ is clearly bounded.

For each $u_m \in C^{\infty}(M)$, let $U_m \in \mcd$ be a unique solution to \eqref{eq-ext-1}. We get from Lemma \ref{lemma-norm} that
\[C\|U_m\|_{\hx}^2 \le I^{\ga}(U_m) = \int_M (P_{\hh}^{\ga} u_m) u_m dv_{\hh} \le C\sup_{m \in \mn} \|u_m\|_{H^{\ga}(M)}^2 < \infty.\]
Therefore, $U_m \rightharpoonup U_0$ in $\hx$ for some $U_0 \in \hx$. It is plain to verify that $U_0$ weakly solves \eqref{eq-ext-1} and \eqref{eq-ext-2}  with $u = u_0$.
The uniqueness of $U_0$ again results from Lemma \ref{lemma-norm}.
\end{proof}
\noindent It is notable that the conformal covariance property \eqref{eq-cc} still holds for all $u \in H^{\ga}(M)$.

\subsection{Bubbles}
Let $N = n+1$, $\mr^N_+ = \{(\bx, x_N): \bx \in \mr^n,\, x_N > 0\}$ and $\dhmr$ be the {\it homogeneous weighted Sobolev space} with weight $x_N^{1-2\ga}$,
i.e., the completion of the space $C^{\infty}_c(\overline{\mr^N_+})$ with respect to norm
\[\|U\|_{\dhmr} := \(\int_{\mr^N_+} x_N^{1-2\ga} |\nabla U(\bx,x_N)|^2 dx\)^{\frac{1}{2}}.\]
As can be seen in \cite[Proposition 1.3]{GQ}, there exists the optimal constant $S_{n,\ga} > 0$ depending only on $n \in \mn$ and $\ga \in (0,1)$ such that
\begin{equation}\label{eq-sob-t}
\|U(\cdot, 0)\|_{L^{2^*+1}(\mr^n)}^2 \le S_{n,\ga} \|U\|_{\dhmr}^2
\end{equation}
for any function $U \in \dhmr$.

Given any $\lambda > 0$ and $\sigma \in \mr^n = \pa \mr^N$, let $w_{\ls}$ be a function on $\mr^n$ defined by
\begin{equation}\label{eq-bubble-2}
w_{\ls}(\bx) := \alpha_{n,\ga} \(\lambda \over \lambda^2 + |\bx - \sigma|^2 \)^{n-2\ga \over 2}
\quad \text{where } \alpha_{n,\ga} := 2^{n-2\ga \over 2} \left[ \frac{\Gamma(\frac{n+2\ga}{2})} {\Gamma(\frac{n-2\ga}{2})} \right]^{n-2\ga \over 4\ga}
\end{equation}
and $W_{\ls}$ the unique solution in $\dhmr$ of the equation
\begin{equation}\label{eq-bubble}
\begin{cases}
-\text{div}(x_N^{1-2\ga} \nabla W_{\ls}) = 0 &\text{in } \mr^N_+,\\
W_{\ls} = w_{\ls} &\text{on } \mr^n.
\end{cases}
\end{equation}
Then
\[\pa_{\nu}^{\ga} W_{\ls} := - \kappa_{\ga} \(\lim\limits_{x_N \to 0+} x_N^{1-2\ga} \dfrac{\pa W_{\ls}}{\pa x_N}\)
= (-\Delta)^{\ga} w_{\ls} = w_{\ls}^{2^*} \quad \text{on } \mr^n\]
(see \cite{CS}) and the equality of \eqref{eq-sob-t} is attained by $U = cW_{\ls}$ for any $c > 0$, $\lambda > 0$ and $\sigma \in \mr^n$.
If $\ga = \frac{1}{2}$, the function $W_{\ls}$ can be explicitly written as
\begin{equation}\label{eq-bubble-12}
W_{\ls}(x) = \alpha_{n, \frac{1}{2}} \({\lambda \over (\lambda + x_N)^2+ |\bx - \sigma|^2}\)^{n-1 \over 2} \quad \text{for } x = (\bx, x_N) \in \mr^N_+.
\end{equation}
Throughout the paper, we call either $w_{\ls}$ or $W_{\ls}$ a {\it bubble}.

It can be easily checked that $W_{\lambda, 0}(\cdot, x_N)$ is radially symmetric for each $x_N > 0$.
Moreover, an immediate consequence of \eqref{eq-Y-const-2} and \eqref{eq-sob-t} is that
\begin{equation}\label{eq-H^n-ene}
\ola^{\ga}(\mh^N, [\hh_c]) = S^{-1}_{n,\ga} \kappa_{\ga} = \(\int_{\mr^n} w_{\ls}^{2^*+1} d\bx\)^{2\ga \over n}
\end{equation}
where $(\mh^N, g^+ = \frac{d\bx^2 + dx_N^2}{x_N^2})$ is the Poincar\'e half-space model and $\hh_c := x_N^2 g^+|_{\mr^n} = d\bx^2$.

\subsection{Results on the metric}
Let us recall the expansion of the metric $\bg$ on $\ox$ near the boundary $M$. Its proof can be found in Escobar \cite[Lemma 3.1]{Es2}.
\begin{lemma}\label{lemma-bg-exp}
Given any point $y \in M$, let $x = (\bx, x_N) \in \mr^N_+$ be the Fermi coordinate on $\ox$ around $y$. Then it holds that
\[\bg^{ij}(x) = \delta_{ij} + 2\tni_{ij} x_N + {1 \over 3} R_{ikjl}[\hh] x_kx_l + \bg^{ij}_{\phantom{ij},Nk} x_Nx_k + (3\tni_{ik}\tni_{kj} + R_{iNjN}[\bg]) x_N^2 + O(|x|^3)\]
and
\[\sqrt{|\bg|}(x) = 1 - nHx_N + {1 \over 2} \(n^2 H^2 - \|\tni\|^2 - R_{NN}[\bg]\) x_N^2 - nH_{,i} x_ix_N - {1 \over 6} R_{ij}[\hh] x_ix_j + O(|x|^3)\]
for $x \in B^N_+(0,r_1)$ with $r_1 > 0$ small. Here $1 \le i, j, k, l \le n$,
\begin{enumerate}
\item[-] $\tni$ is the second fundamental form on $(M, \hh) \subset (\ox, \bg)$ and $\|\tni\|^2 = \bg^{ik}\bg^{jl}\tni_{ij}\tni_{kl}$;
\item[-] $H$ is the mean curvature on $M$, that is, the average of the diagonal component of $\tni$;
\item[-] $R_{ikjl}[\hh]$ and $R_{iNjN}[\bg]$ are components of the full Riemannian curvature tensors on $(M, \hh)$ and $(\ox, \bg)$, respectively;
\item[-] $R_{ij}[\hh]$ and $R_{NN}[\bg]$ are components of the Ricci tensors on $(M, \hh)$ and $(\ox, \bg)$, respectively.
\end{enumerate}
Commas mean partial derivatives and all the coefficients are computed at $y$.
\end{lemma}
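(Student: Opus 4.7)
The plan is to exploit the defining property of Fermi coordinates $(\bx, x_N)$ around $y \in M$: they satisfy $\bg_{NN} \equiv 1$ and $\bg_{iN} \equiv 0$, so the entire metric is encoded in the induced metric $h_{x_N}(\bx) = \bg_{ij}(\bx, x_N)$ on the parallel hypersurfaces $\{x_N = \mathrm{const}\}$. I would additionally require that the tangential coordinates be $\hh$-normal coordinates on $M$ centered at $y$, which forces $\bg_{ij}(0) = \delta_{ij}$ and $\pa_k \bg_{ij}(0) = 0$. The entire expansion then reduces to Taylor expanding $\bg_{ij}(\bx, x_N)$ in the four remaining directions $\{x_N, x_N x_N, x_k x_l, x_N x_k\}$ and reading off $\bg^{ij}$ and $\sqrt{|\bg|}$ by matrix inversion and a determinantal expansion.

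Next I would identify each nonvanishing low-order coefficient with a geometric tensor at $y$. The normal first derivative is $\pa_N \bg_{ij}(0) = -2\tni_{ij}$ by the standard identification of $\tni$ with one-half the normal derivative of the induced metric along the inward unit normal. The pure tangential second derivatives $\pa_k \pa_l \bg_{ij}(0)$ are the classical $-\frac{1}{3}(R_{ikjl}[\hh] + R_{iljk}[\hh])$ from the normal coordinate expansion on $(M,\hh)$. The pure normal second derivative $\pa_N^2 \bg_{ij}(0)$ comes from differentiating $\pa_N \bg_{ij} = -2 \tni_{ij}(x_N)$ once more in $x_N$ and invoking the Riccati-type evolution equation for the second fundamental form along the family of parallel hypersurfaces; this produces a linear combination of $R_{iNjN}[\bg]$ and $\tni_{ik}\tni_{kj}$. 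The remaining mixed term at order $x_N x_k$ has no simpler geometric name at this level and is kept symbolic as $\bg^{ij}_{\phantom{ij},Nk}$.

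To pass from $\bg_{ij}$ to $\bg^{ij}$, I would apply $(I+A)^{-1} = I - A + A^2 + O(|A|^3)$ to $A_{ij} := \bg_{ij} - \delta_{ij}$. The crucial point is that at order $x_N^2$ the square $A^2$ contributes $4\tni_{ik}\tni_{kj}$, which together with $-\tfrac12\pa_N^2\bg_{ij}(0)$ combines into the stated $3\tni_{ik}\tni_{kj} + R_{iNjN}[\bg]$. For $\sqrt{|\bg|}$ I would use $\log\det(I+A) = \mathrm{tr}(A) - \tfrac12 \mathrm{tr}(A^2) + O(|A|^3)$ and then exponentiate: the trace of the leading $-2\tni_{ij} x_N$ gives $-nH\,x_N$; the trace of the tangential quadratic term yields $-\tfrac16 R_{ij}[\hh]\,x_i x_j$ via the Ricci contraction $\hh^{ij}R_{ikjl}[\hh] = R_{kl}[\hh]$; the normal quadratic trace combines with the squared-trace correction $\tfrac12 n^2 H^2 x_N^2$ to produce the stated $\tfrac12(n^2 H^2 - \|\tni\|^2 - R_{NN}[\bg])x_N^2$; and the mixed trace, simplified via the contracted Codazzi identity, produces $-nH_{,i}\,x_i x_N$.

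The main obstacle is purely bookkeeping: carefully tracking the $A^2$ correction from matrix inversion at second order and the squared-trace correction in the determinantal expansion, and combining them with the Riccati identification of $\pa_N^2 \bg_{ij}(0)$ with the correct signs. Since these are all standard Fermi-coordinate identities, I would appeal directly to the corresponding computation in Escobar \cite[Lemma 3.1]{Es2}, after verifying that the sign conventions for the inward normal and for $\tni$ used here match his.
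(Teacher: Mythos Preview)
Your proposal is correct and takes essentially the same approach as the paper: the paper itself offers no independent proof but simply cites Escobar \cite[Lemma~3.1]{Es2}, and your sketch accurately outlines the Fermi-coordinate computation carried out there. Your summary of the mechanism (Riccati evolution for $\pa_N^2 \bg_{ij}$, the $(I+A)^{-1}$ and $\log\det(I+A)$ expansions, and the Codazzi contraction for the mixed term) is precisely what underlies that result, so there is nothing to add.
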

\begin{rmk}\label{rmk-E}
In general, it holds that $|E(\rho)| \le C \rho^{-2\ga}$ in $X$ for some $C > 0$,
so the energy functionals $I^{\ga}(U)$ or $J^{\ga}(U)$ in \eqref{eq-I^ga} and \eqref{eq-J^ga} are well-defined for all $U \in \hx$ and $\ga \in (0, \frac{1}{2})$; see the proof of \cite[Lemma 3.1]{CK}.
Because the coefficient of $x_i x_j x_N$ contains $H_{,ij}$, one of $x_i x_j x_k x_N$ contains $H_{,ijk}$ and so on, if $H = 0$ on $M$, then we have that $|\pa_{\rho} \sqrt{|\bg|}| \le C \rho$ in $X$.
Owing to \eqref{eq-E}, this in turn implies that $|E(\rho)| \le C \rho^{1-2\ga}$ in $X$,
and $I^{\ga}(U)$ and $J^{\ga}(U)$ are well-defined for all $U \in \hx$ and $\ga \in (0,1)$.
\end{rmk}

We also have the following higher order expansion of the metric $\bg$ due to Marques \cite{Ma}.
\begin{lemma}\label{lemma_metric_2}
Given $y \in M$, let $x = (\bx, x_N) \in \mr^N_+$ be the Fermi coordinate on $\ox$ around $y$.
If $\tni = \tni_{;i} = \tni_{;ij} = \tni_{;ijk} = 0$ at $y$, it holds that
\begin{align*}
\sqrt{|\bg|}(x) &= 1 - {1 \over 12} R_{ij;k}[\hh] x_ix_jx_k - {1 \over 2} R_{NN;i}[\bg] x_N^2x_i - {1 \over 6} R_{NN;N}[\bg] x_N^3 \\
&\ - {1 \over 20} \({1 \over 2} R_{ij;kl}[\hh] + {1 \over 9} R_{miqj}[\hh] R_{mkql}[\hh]\) x_ix_jx_kx_l - {1 \over 4} R_{NN;ij}[\bg] x_N^2x_ix_j \\
&\ - {1 \over 6} R_{NN;Ni}[\bg] x_N^3x_i - {1 \over 24} \left[ R_{NN;NN}[\bg] + 2 (R_{iNjN}[\bg])^2 \right] x_N^4 + O(|x|^5)
\end{align*}
and
\begin{align*}
\bg^{ij}(x) &= \delta_{ij} + {1 \over 3} R_{ikjl}[\hh] x_kx_l + R_{iNjN}[\bg] x_N^2 + {1 \over 6} R_{ikjl;m}[\hh] x_kx_lx_m + R_{iNjN;k}[\bg] x_N^2x_k \\
&\ + {1 \over 3} R_{iNjN;N}[\bg] x_N^3 + \({1 \over 20} R_{ikjl;mq}[\hh] + {1 \over 15} R_{iksl}[\hh] R_{jmsq}[\hh]\) x_kx_lx_mx_q \\
&\ + \({1 \over 2} R_{iNjN;kl}[\bg] + {1 \over 3} \textnormal{Sym}_{ij}(R_{iksl}[\hh] R_{sNjN}[\bg])\)x_N^2x_kx_l + {1 \over 3} R_{iNjN;kN}[\bg] x_N^3x_k \\
&\ + {1 \over 12} \(R_{iNjN;NN}[\bg] + 8 R_{iNsN}[\bg] R_{sNjN}[\bg]\) x_N^4 + O(|x|^5)
\end{align*}
for $x \in B^N_+(0,r_1)$. Here $1 \le i, j, k, l, m, q \le n$, semicolons mean covariant derivatives and every tensor is computed at $y$.
\end{lemma}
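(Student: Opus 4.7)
The plan is to compute the Taylor expansions of $\bg^{ij}$ and $\sqrt{|\bg|}$ in Fermi coordinates directly, using the Riccati/Gauss--Codazzi--Mainardi framework to identify all relevant geometric invariants. Write $x = (\bx, x_N)$, where $\bx$ are $\hh$-normal coordinates on $M$ centered at $y$ and $x_N$ is the signed $\bg$-distance to $M$. In Fermi coordinates one has the block form $\bg_{NN} \equiv 1$, $\bg_{iN} \equiv 0$, so only the transverse block $\bg_{ij}(\bx,x_N)$ has to be expanded, and the normal-direction Taylor coefficients $\partial_{x_N}^k \bg_{ij}|_{x_N = 0}$ are determined inductively from the shape operator via the Riccati equation
\[
\partial_{x_N} \bg_{ij} = -2\,\tni_{ij},\qquad \partial_{x_N} \tni_{ij} = -R_{iNjN}[\bg] + \tni_{ik}\bg^{kl}\tni_{lj},
\]
together with its covariant derivatives. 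Meanwhile the mixed $\bx$-dependence of $\bg_{ij}|_{x_N=0} = \hh_{ij}(\bx)$ is the standard normal-coordinate expansion $\hh_{ij}(\bx) = \delta_{ij} - \frac{1}{3}R_{ikjl}[\hh]x_kx_l + \cdots$ through order $|\bx|^4$.

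Next, I would impose the vanishing hypothesis $\tni = \tni_{;i} = \tni_{;ij} = \tni_{;ijk} = 0$ at $y$. Combined with the Codazzi--Mainardi equation $\tni_{ij;k} - \tni_{ik;j} = R_{iNjk}[\bg]$ and its derivatives, this kills not only the terms in the expansion of $\bg_{ij}(\bx,x_N)$ that carry a bare $\tni$-factor, but also the $R_{iNjk}[\bg]$ pieces through the appropriate order; only pure $R_{iNjN}[\bg]$, $R_{ikjl}[\hh]$ and their covariant derivatives (plus products like $R_{iksl}[\hh]R_{jmsq}[\hh]$ and $R_{iNsN}[\bg]R_{sNjN}[\bg]$) survive. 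The symmetrization term $\textnormal{Sym}_{ij}(R_{iksl}[\hh]R_{sNjN}[\bg])$ in the stated expansion arises precisely from iterating the Riccati equation with a tangential $\bx^2$ dependence.

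To obtain the expansion of $\sqrt{|\bg|}$, I would apply the identity $\sqrt{|\bg|} = \exp\bigl(\frac{1}{2}\log\det \bg_{ij}\bigr)$, expand $\log \det \bg_{ij}$ using Jacobi's formula, and then expand the exponential. Every tensorial coefficient appearing in $\bg_{ij}$ reappears, up to a trace in the $(ij)$-indices, giving the Ricci contractions $R_{ij}[\hh]$ and $R_{NN}[\bg]$ seen in the final formula; the quartic $(R_{iNjN}[\bg])^2$-term in the $x_N^4$-coefficient of $\sqrt{|\bg|}$ comes from the quadratic piece of the exponential series combined with the leading $R_{iNjN}[\bg] x_N^2$-term of $\bg^{ij}$. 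The expansion of $\bg^{ij}$ is then obtained by formally inverting $\bg_{ij}$ as a matrix power series in $x$ using $(\mathrm{Id}+A)^{-1} = \mathrm{Id} - A + A^2 - \cdots$; again only the terms compatible with the vanishing of the $\tni$-derivatives survive.

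The main obstacle is bookkeeping: identifying at each order all contributions from (i) the normal-coordinate expansion of $\hh_{ij}(\bx)$, (ii) the iterated Riccati equation that propagates curvature into the $x_N$-direction, and (iii) the cross-terms $\bx^a x_N^b$ forced by the Codazzi--Mainardi relations, and then reducing them to the canonical basis of invariants using the second Bianchi identity. Since the result is precisely the expansion derived in Marques \cite{Ma} for the boundary Yamabe problem (there the ambient manifold played the role of $\ox$), I would organize the computation exactly as in that reference, simply checking that nothing in the asymptotically hyperbolic setting prevents its application; the Fermi-coordinate computation is purely local near $M$ and makes no use of the global structure of $(X, g^+)$.
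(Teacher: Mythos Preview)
Your proposal is correct and in fact more detailed than what the paper provides: the paper does not prove this lemma at all but simply attributes it to Marques \cite{Ma}, stating it as a known result. Your outline of the Fermi-coordinate computation via the Riccati equation and normal-coordinate expansions is exactly the method used in \cite{Ma}, and you correctly recognize this at the end of your proposal.
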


The next lemmas explain how to choose a good conformal metric $\hh$ on $M$ and to control extrinsic quantities
such as the mean curvature $H$ or the second fundamental form $\tni$ on $M$ with the help of conditions \eqref{eq-R-dec} or \eqref{eq-R-dec-2}.
\begin{lemma}\label{lemma-g-mtr}
Suppose that $(X^N, g^+)$ is an asymptotically hyperbolic manifold with conformal infinity $(M, [\hh])$.
If \eqref{eq-R-dec} holds, then there exist a representative $\hh_0 \in [\hh]$,
the geodesic boundary defining function $\rho_0$ associated to $\hh_0$ and $\bg_0 = \rho^2_0g^+$ such that
\begin{equation}\label{eq-hh_0}
H = 0 \text{ on } M, \quad R_{ij}[\hh_0](y) = 0 \quad \text{and} \quad R_{\rho_0\rho_0}[\bg_0](y) = {1-2n \over 2(n-1)} \|\tni_0(y)\|^2
\end{equation}
for a fixed point $y \in M$, where $\tni_0$ is the second fundamental form on $(M, \hh_0) \subset (\ox, \bg_0)$.
\end{lemma}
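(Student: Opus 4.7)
The plan is to first derive a clean identity relating $R[g^+]+n(n+1)$ to curvature and mean-curvature data of $\bg = \rho^2 g^+$, then read off pointwise information from the decay hypothesis \eqref{eq-R-dec}, and finally combine with conformal normal coordinates on $(M, \hh)$ and the Gauss equation on $M \subset \ox$. Starting from the standard conformal change formula for the scalar curvature applied to $g^+ = \rho^{-2} \bg$ on the $(n+1)$-dimensional manifold $\ox$, together with the defining property $|\nabla \rho|_{\bg} = 1$ of the geodesic defining function, I would obtain the identity
\[
R[g^+] + n(n+1) = \rho^2\, R[\bg] + 2n\, \rho\, \Delta_{\bg}\rho.
\]

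Next, in Fermi coordinates $x = (\bx, x_N)$ with $x_N = \rho$, one has $\Delta_{\bg}\rho = \pa_N \ln \sqrt{|\bg|}$, so Lemma \ref{lemma-bg-exp} gives the Taylor expansion
\[
\Delta_{\bg}\rho = -nH - \bigl(\|\tni\|^2 + R_{NN}[\bg]\bigr)\rho + O(\rho^2)
\]
at a fixed boundary point. Plugging this into the identity above, the $\rho$-coefficient in $R[g^+]+n(n+1)$ equals $-2n^2 H$ and the $\rho^2$-coefficient at $y$ equals $R[\bg](y) - 2n\bigl(\|\tni(y)\|^2 + R_{NN}[\bg](y)\bigr)$. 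The hypothesis \eqref{eq-R-dec} forces both coefficients to vanish, giving $H \equiv 0$ on $M$ (the first claim) and the pointwise relation
\[
R_{NN}[\bg](y) = \tfrac{1}{2n}\, R[\bg](y) - \|\tni(y)\|^2.
\]

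Now choose a representative $\hh_0 \in [\hh]$ by Lee--Parker conformal normal coordinates at $y$, so that $R_{ij}[\hh_0](y) = 0$ and in particular $R[\hh_0](y) = 0$. Let $\rho_0$ be its geodesic defining function and $\bg_0 = \rho_0^2 g^+$. Since \eqref{eq-R-dec} is a property of $g^+$ alone (with the $\rho$-rates for different representatives equivalent), the previous step applied to $\bg_0$ yields $H_0 \equiv 0$ on $M$ (consistent with the conformal invariance of the sign of $H$ from \cite[Lemma 2.3]{GQ}) and the same pointwise relation with $\bg_0$, $\tni_0$. Then the Gauss equation for $(M, \hh_0) \subset (\ox, \bg_0)$ in the case $H_0 = 0$ reads
\[
R[\hh_0](y) = R[\bg_0](y) - 2 R_{\rho_0\rho_0}[\bg_0](y) - \|\tni_0(y)\|^2,
\]
so using $R[\hh_0](y) = 0$ and eliminating $R[\bg_0](y)$ via the pointwise relation gives
\[
0 = 2(n-1)\, R_{\rho_0\rho_0}[\bg_0](y) + (2n-1)\,\|\tni_0(y)\|^2,
\]
which is exactly the third identity of \eqref{eq-hh_0}.

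The main obstacle is really a careful bookkeeping one: making sure the Lee--Parker normalization at $y$ is compatible with the geodesic defining function setup (the conformal change lives only on $M$, and $\rho_0$ is uniquely determined by $\hh_0$ and $g^+$), and that $H \equiv 0$ is preserved when passing from the auxiliary representative used to derive the relations to the normalized one $\hh_0$; both follow from the conformal invariance statement quoted from \cite{GQ}. Everything else reduces to the explicit expansion of $\sqrt{|\bg|}$ in Lemma \ref{lemma-bg-exp} and the conformal change formula for scalar curvature.
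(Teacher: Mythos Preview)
Your argument is correct. The conformal change identity $R[g^+]+n(n+1)=\rho^2 R[\bg]+2n\rho\,\Delta_{\bg}\rho$ (valid because $|\nabla\rho|_{\bg}=1$), the expansion of $\Delta_{\bg}\rho=\pa_N\ln\sqrt{|\bg|}$ via Lemma~\ref{lemma-bg-exp}, and the Gauss equation combined with $R[\hh_0](y)=0$ from conformal normal coordinates indeed yield exactly the three assertions in \eqref{eq-hh_0}. One minor simplification: you do not need to invoke the conformal invariance of the sign of $H$ from \cite{GQ} to conclude $H_0\equiv 0$; the vanishing of the $\rho$-coefficient in your expansion is valid for \emph{any} geodesic defining function, so applying it directly to $\rho_0$ gives $H_0=0$ on $M$ without passing through an auxiliary representative.

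As for comparison with the paper: the paper does not give a proof here at all---it simply cites \cite[Lemma~2.4]{KMW}. Your argument is the natural self-contained route (and is essentially what one finds in that reference): read off the first two orders of $R[g^+]+n(n+1)$ in $\rho$, then feed $R[\hh_0](y)=0$ into the traced Gauss equation to eliminate $R[\bg_0](y)$.
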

\begin{proof}
The proof can be found in \cite[Lemma 2.4]{KMW}.
\end{proof}

The next lemma is a slight generalization of \cite[Lemma 3.2]{KMW} in that the decay condition of $R[g^+]$ is relieved compared to that of \cite{KMW}.
\begin{lemma}\label{lemma-g-mtr-2}
For $n \ge 3$, let $(X^N, g^+)$ be an asymptotically hyperbolic manifold such that the conformal infinity $(M^n, [\hh])$ is umbilic.
If \eqref{eq-R-dec-2} holds, then there exist a representative $\hh_0 \in [\hh]$,
the geodesic boundary defining function $\rho_0$ ($= x_N$ near $M$) associated to $\hh_0$ and the metric $\bg_0 = \rho_0^2 g^+$ such that \eqref{eq-hh_0} is true and
\begin{itemize}
\item[(1)] $\textnormal{Sym}_{ijk} R_{ij;k}[\hh_0](y) = 0$,\
$\textnormal{Sym}_{ijkl}\(R_{ij;kl}[\hh_0] + \dfrac{2}{9} R_{miqj}[\hh_0] R_{mkql}[\hh_0]\)(y) = 0$,
\item[(2)] $\tni = 0$ on $M$, $R_{NN;N}[\bg_0](y) = R_{aN}[\bg_0](y) = 0$,
\item[(3)] $R_{;ii}[\bg_0](y) = -\dfrac{n\|W_0(y)\|^2}{6(n-1)}$,\ $R_{NN;ii}[\bg_0](y) = - \dfrac{\|W_0(y)\|^2}{12(n-1)}$,
\item[(4)] $R_{iNjN}[\bg_0](y) = R_{ij}[\bg_0](y)$,\ $R_{NN;NN}[\bg_0](y) = \dfrac{3}{2n} R_{;NN}[\bg_0](y) - 2 (R_{ij}[\bg_0](y))^2$,
\item[(5)] $R_{iNjN;ij}[\bg_0](y) = \(\dfrac{3-n}{2n}\) R_{;NN}[\bg_0](y) - (R_{ij}[\bg_0](y))^2 - \dfrac{\|W_0(y)\|^2}{12(n-1)}$
\end{itemize}
for a fixed point $y \in M$, where $\|W_0\|$ is the norm of the Weyl tensor $W_0 := W[\hh_0]$ of $(M,\hh_0)$.
\end{lemma}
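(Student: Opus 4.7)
The plan is to mimic the argument for \cite[Lemma 3.2]{KMW} but to track carefully where a decay of $R[g^+]+n(n+1)$ stronger than $o(\rho^4)$ would be invoked, and to verify that none of the conclusions (1)--(5) requires such extra information. First I would pick the representative $\hh_0 \in [\hh]$ via the Cao \cite{Ca}--G\"unther \cite{Gun} conformal normal coordinate construction at $y$, so that the symmetrized covariant derivatives of $\mathrm{Ric}[\hh_0]$ at $y$ vanish through the orders displayed in item (1). Since $(M, [\hh])$ is umbilic, a further conformal rescaling preserving umbilicity can be arranged so that $\tni = 0$ identically on $M$, which is the first part of (2) and puts Lemma \ref{lemma_metric_2} into force in the Fermi coordinates associated with the geodesic defining function $\rho_0$.

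The core computation rests on the conformal identity
\[ R[g^+] + n(n+1) = \rho_0^2\, R[\bg_0] + 2n\, \rho_0\, \Delta_{\bg_0}\rho_0, \]
which follows from $g^+ = \rho_0^{-2}\bg_0$ together with $|d\rho_0|_{\bg_0} = 1$. In Fermi coordinates, $\Delta_{\bg_0}\rho_0 = \pa_{x_N}\log\sqrt{|\bg_0|}$, so inserting the expansions of $\sqrt{|\bg_0|}$ and $\bg_0^{ij}$ from Lemma \ref{lemma_metric_2} produces a power series for the right-hand side in $x_N$ along the fiber $\{\bx = 0\}$. The hypothesis \eqref{eq-R-dec-2} then forces the coefficients of $x_N,\, x_N^2,\, x_N^3,\, x_N^4$ at $y$ to vanish, giving four scalar identities. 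The order-$x_N$ and order-$x_N^2$ relations, together with the Codazzi equation (which under $\tni = 0$ gives $R_{iNkl}[\bg_0] = 0$ on $M$) and the Gauss equation, complete (2) and supply the tangential identity $R_{iNjN}[\bg_0](y) = R_{ij}[\bg_0](y)$ appearing in (4). Combining the order-$x_N^3$ and order-$x_N^4$ relations with tracing operations over the tangential indices and with the decomposition of the Riemann tensor into its Weyl and Schouten-tensor pieces, items (3), (4), and (5) drop out after elementary linear algebra.

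The delicate step is the order-$x_N^4$ coefficient: it is a polynomial in the second covariant derivatives of the ambient Ricci tensor at $y$ and in a few quadratic curvature contractions, and extracting the clean $\|W_0(y)\|^2$ quantities required in (3) and (5) demands a careful separation of the Weyl part from the Ricci part. Here the conformal normal coordinates of Step~1 are essential, because after symmetrization the purely tangential Ricci contributions at $y$ cancel, leaving only the Weyl-squared terms $\mathrm{Sym}_{ijkl} R_{miqj}[\hh_0] R_{mkql}[\hh_0]$, which expressed in terms of $W_0$ at $y$ contribute $-\|W_0(y)\|^2/12(n-1)$ to $R_{NN;ii}[\bg_0](y)$ (and the analogous scalar factor for $R_{;ii}[\bg_0](y)$). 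In \cite{KMW} this Weyl-vs-Ricci separation was avoided by appealing to a higher-order decay of $R[g^+]+n(n+1)$ which directly cancels the obstructing Ricci terms; the improvement promised in Remark 1.2(2) is precisely that a direct inspection of the order-$x_N^4$ coefficient, once (1), (2), and the first line of (4) are in hand, already yields (3) and (5) without any appeal to vanishing beyond order four. The main technical burden of the proof is therefore this bookkeeping of the order-$x_N^4$ term, and I expect it to be the principal obstacle.
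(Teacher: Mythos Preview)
Your overall plan---conformal normal coordinates on $M$ via Cao--G\"unther, umbilicity to force $\tni=0$, the conformal identity $R[g^+]+n(n+1)=\rho_0^2 R[\bg_0]+2n\rho_0\Delta_{\bg_0}\rho_0$, and then Gauss, Codazzi and Bianchi manipulations---is exactly the paper's route. The gap is in how you read off consequences of \eqref{eq-R-dec-2}. You expand ``along the fiber $\{\bx=0\}$'' and keep only the four scalar identities coming from the pure $x_N^k$ coefficients at $y$. But \eqref{eq-R-dec-2} is $o(\rho_0^4)$ \emph{uniformly on $M$}: in Fermi coordinates this means that each $x_N^k$-coefficient, for $k\le 4$, vanishes as a function of $\bx$ in a whole neighbourhood of $0$, so its tangential derivatives at $0$ vanish as well. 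The paper uses precisely this: applying $\pa_i\pa_i|_{\bx=0}$ to the (order-$x_N^2$) identity $R[\bg_0]=2nR_{NN}[\bg_0]$ gives $R_{;ii}[\bg_0](y)=2nR_{NN;ii}[\bg_0](y)$, and together with the Gauss relation $R_{;ii}[\bg_0]=R_{;ii}[\hh_0]+2R_{NN;ii}[\bg_0]$ and the conformal-normal-coordinate fact $R_{;ii}[\hh_0](y)=-\tfrac{1}{6}\|W_0(y)\|^2$ one solves a $2\times2$ system for both quantities in (3). Your four fiber relations supply only one equation linking these two unknowns, so (3) cannot be reached, and since (5) is obtained from (3) via the contracted Bianchi and Ricci identities, the gap propagates there too.

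Your attribution of which order produces which item is also off. In your indexing: order $x_N$ is the trivial $H=0$; order $x_N^2$ gives $R[\bg_0](y)=2nR_{NN}[\bg_0](y)$; order $x_N^3$ gives $nR_{NN;N}=R_{;N}$, which combined with the contracted Bianchi identity $R_{;N}=2R_{NN;N}$ yields $R_{NN;N}(y)=0$ (so (2) needs order $x_N^3$, not $x_N$--$x_N^2$); and order $x_N^4$ gives exactly \eqref{eq-R-rel-2}, i.e.\ the second half of (4). Item (3) does not come from any ``order-$x_N^4$'' computation, and there is no Weyl-versus-Ricci separation difficulty of the kind you describe: the Weyl-squared contribution in (3) enters only through $R_{;ii}[\hh_0](y)$ in the conformal normal coordinates. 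The improvement over \cite{KMW} is simply that the identities listed above already close using only the $o(\rho_0^4)$ information, once one remembers to use the tangential content of the uniform decay; no higher-order vanishing is invoked anywhere.
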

\begin{proof}
Fix $y \in M$. In view of the previous lemma and the existence of a conformal normal coordinate \cite[Theorem 5.2]{LP}, see also \cite{Ca, Gun},
we may assume that the metric $\hh_0$ on $M$ satisfies \eqref{eq-hh_0} and (1) in the statement.
Then we see from the umbilicity of $M$ and Lemma \ref{lemma_metric_2} that $\tni(y) = 0$ and that \eqref{eq-R-dec-2} is equivalent to
\begin{align*}
&2n \left[ - R_{NN}[\bg_0] - R_{NN;i}[\bg_0] x_i - {1 \over 2} R_{NN;N}[\bg_0] x_N - {1 \over 2} R_{NN;ij}[\bg_0] x_ix_j \right.\\
&\quad \left. - {1 \over 2} R_{NN;Ni}[\bg_0] x_Nx_i + \left\{{1 \over 2} (R_{NN}[\bg_0])^2 - {1 \over 6} R_{NN;NN}[\bg_0] - {1 \over 3} (R_{iNjN}[\bg_0])^2 \right\} x_N^2\right] \\
&\ \times \( 1 + {1 \over 2} R_{NN}[\bg_0] x_N^2 \) \\
&\ + \( R[\bg_0] + R_{,i}[\bg_0] x_i + R_{,N}[\bg_0] x_N + {1 \over 2} R_{,ij}[\bg_0]x_ix_j + R_{,iN}[\bg_0]x_ix_N + {1 \over 2} R_{,NN}[\bg_0] x_N^2 \) \\
&= o(x_N^2) + O(|x|^3)
\end{align*}
in the Fermi coordinate on $\ox$ around $y$. All tensors here are evaluated at $y$.
Hence the coefficient of $1, x_N, x_i^2$ and $x_N^2$ in the left-hand side should be $0$, which implies
\begin{equation}\label{eq-R-rel}
0 = 2n R_{NN}[\bg_0] = R[\bg_0],\quad n R_{NN;N}[\bg_0] = R_{;N}[\bg_0],\quad 2n R_{NN;ii}[\bg_0] = R_{;ii}[\bg_0]
\end{equation}
and
\begin{equation}\label{eq-R-rel-2}
2n \left[R_{NN;NN}[\bg_0] + 2(R_{iNjN}[\bg_0])^2 \right] = 3 R_{;NN}[\bg_0]
\end{equation}
at $y $. Note that $R_{,aa}[\bg_0] = R_{;aa}[\bg_0]$ at $y$ for each $a = 1, \cdots, N$.

By the Codazzi equation $R_{ijkN}[\bg_0] = \tni_{ik;j} - \tni_{jk;i}$ and the second Bianchi identity, it holds that $R_{iN}[\bg_0] = 0$ and $R_{,N}[\bg_0] = 2 R_{NN;N}[\bg_0]$ at $y$.
This with the second equality in \eqref{eq-R-rel} gives $R_{NN;N}[\bg_0] = 0$ at $y$, proving (2).

Furthermore, the Gauss-Codazzi equation and the proof of \cite[Theorem 5.1]{LP} lead to
\[R_{;ii}[\bg_0] = 2R_{NN;ii}[\bg_0] - R_{;ii}[\hh] = 2R_{NN;ii}[\bg_0] - {\|W_0\|^2 \over 6}\]
at $y$. Hence, together with the third equality in \eqref{eq-R-rel}, we deduce (3).

We also have that
\[R_{ij}[\bg_0] = R_{ikjk}[\bg_0] + R_{iNjN}[\bg_0] = R_{ij}[\hh_0] + R_{iNjN}[\bg_0] = R_{iNjN}[\bg_0]\]
at $y$. Therefore (4) follows from \eqref{eq-R-rel-2}.

Finally, combining the contracted second Bianchi identity $R_{;NN}[\bg_0] = 2R_{aN;aN}[\bg_0]$, the Ricci identity
\[R_{iN;iN}[\bg_0] - R_{iN;Ni}[\bg_0] = R_{aiiN}[\bg_0] R_{aN}[\bg_0] + R_{aNiN}[\bg_0] R_{ia}[\bg_0] = (R_{ij}[\bg_0])^2\]
and
\[R_{iN;Ni}[\bg_0] = - R_{NjjN;ii}[\bg_0] - R_{NjNi;ji}[\bg_0] = - \dfrac{\|W_0\|^2}{12(n-1)} - R_{iNjN;ij}[\bg_0]\]
at $y$, we arrive at (5). The proof is finished.
\end{proof}

\section{An existence criterion}\label{sec-ps-cri}
\subsection{Constraint set and accompanying quantities} \label{subsec-cons}
Throughout this section, we always assume that $f$ is a smooth function on $M$, somewhere positive.
One way to search solutions for \eqref{eq-ps} is to examine the existence of positive solutions to a more general class of problems
\begin{equation}\label{eq-sub}
P^{\ga}_{\hh} u = f |u|^{\beta-1} u \quad \text{on } (M^n, \hh)
\end{equation}
for $\beta \in (1, 2^*]$. Thanks to Proposition \ref{prop-GQ-ext} and Lemma \ref{lemma-GQ-ext}, it can be interpreted as
\begin{equation}\label{eq-sub-ext}
\begin{cases}
-\textnormal{div}_{\bg} (\rho^{1-2\ga} \nabla U) + E(\rho)U = 0 &\text{in } (X, \bg),\\
U = u &\text{on } M,\\
\pa^{\ga}_{\nu} U = f |u|^{\beta-1} u &\text{on } M,
\end{cases}
\end{equation}
where the last equation should be modified adequately if $\ga = \frac{1}{2}$.

Since $f$ is positive at some point on $M$, the {\it constraint set}
\begin{equation}\label{eq-mcbf}
\mcc_{\beta, f} = \left\{U \in \hx: U = u \text{ on } M,\, \int_M f |u|^{\beta+1} dv_{\hh} = 1 \right\}
\end{equation}
is nonempty. If we set
\begin{equation}\label{eq-wtla}
\oth^{\ga}(\beta, f) = \inf_{U \in \mcc_{\beta, f}(M)} I^{\ga}(U),
\quad \wtth^{\ga}(\beta, f) = \inf_{U \in \mcc_{\beta, f}(M)} J^{\ga}(U)
\end{equation}
and
\begin{equation}\label{eq-labf}
\Theta^{\ga}(\beta, f) = \inf \left\{\int_M u P^{\ga}_{\hh} u\, dv_{\hh}: u \in H^{\ga}(M), \int_M f |u|^{\beta+1} dv_{\hh} = 1 \right\}
\end{equation}
where $I^{\ga}$ and $J^{\ga}$ are functionals defined in \eqref{eq-I^ga} and \eqref{eq-J^ga},
then we have the following relation.
\begin{lemma}\label{lemma-la}
Suppose that \eqref{eq-eig} is valid, $H = 0$ for $\ga \in (\frac{1}{2}, 1)$, $\Lambda^{\ga}(M, [\hh]) > 0$ and $\beta \in (1, 2^*]$. Then it holds that
\[\Theta^{\ga}(\beta, f) = \oth^{\ga}(\beta, f) = \wtth^{\ga}(\beta, f) \ge 0.\]
\end{lemma}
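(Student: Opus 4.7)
The plan is to establish the chain of equalities $\Theta^{\ga}(\beta, f) = \oth^{\ga}(\beta, f) = \wtth^{\ga}(\beta, f)$ together with the nonnegativity via the variational principle underlying Proposition \ref{prop-GQ-ext}: for each admissible boundary trace $u$, the bulk energies $I^{\ga}$ and $J^{\ga}$ should be minimized over the affine fiber $\{U \in \hx : U|_M = u\}$ at the weak solutions of \eqref{eq-ext-1} and \eqref{eq-ext-3} respectively, and the minimum value on this fiber should coincide with the boundary Dirichlet form $\int_M u P^{\ga}_{\hh} u\, dv_{\hh}$. The inequality $\Theta^{\ga}(\beta, f) \ge 0$ then comes for free from $\Lambda^{\ga}(M, [\hh]) > 0$ in view of \eqref{eq-Y-const}, so the substance of the proof lies in the two equalities.

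To show $\oth^{\ga}(\beta, f) = \Theta^{\ga}(\beta, f)$ I would carry out a matching pair of inequalities. For the $\le$ direction, given a competitor $u$ for \eqref{eq-labf}, Lemma \ref{lemma-GQ-ext} supplies a unique $U_u \in \hx$ weakly solving \eqref{eq-ext-1} with trace $u$; testing that weak equation against $U_u$ itself and applying \eqref{eq-ext-2} together with the mean curvature correction $\frac{n-1}{2}\int_M HU^2 dv_{\hh}$ built into \eqref{eq-I^ga} when $\ga = \frac{1}{2}$ should give $I^{\ga}(U_u) = \int_M u P^{\ga}_{\hh} u\, dv_{\hh}$, and since $U_u \in \mcc_{\beta, f}$ automatically this yields $\oth^{\ga} \le \Theta^{\ga}$. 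For the reverse direction, I would take any $U \in \mcc_{\beta, f}$ with trace $u$ and split $U = U_u + V$ with $V \in \hx$ of zero trace on $M$. The quadratic expansion of $I^{\ga}$ then produces a cross term that vanishes because $V$ is an admissible test function in the weak formulation of the equation satisfied by $U_u$, and the remaining self-energy $I^{\ga}(V) \ge 0$ by Lemma \ref{lemma-norm}, so $I^{\ga}(U) \ge I^{\ga}(U_u) = \int_M u P^{\ga}_{\hh} u\, dv_{\hh} \ge \Theta^{\ga}$, and infimizing closes the loop.

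The equality $\wtth^{\ga}(\beta, f) = \Theta^{\ga}(\beta, f)$ should follow by the same recipe applied to the adapted extension \eqref{eq-ext-3} with the defining function $\rho^*$ from \eqref{eq-adaptive}, exploiting that $\hx$ and $H^{1,2}(X; (\rho^*)^{1-2\ga})$ are equivalent as Hilbert spaces. The one point to monitor is the case $\ga = \frac{1}{2}$: the mean curvature correction appears in $I^{\ga}$ as an extrinsic boundary term, while in $J^{\ga}$ it is instead absorbed into the intrinsic term $\int_M Q^{\ga}_{\hh} U^2 dv_{\hh}$ via the identity $\pa^{\ga}_{\nu} \wtu = P^{\ga}_{\hh} u - Q^{\ga}_{\hh} u$ from \eqref{eq-ext-3}. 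I do not expect a conceptual obstacle here; the main task is bookkeeping, namely justifying the integration-by-parts identity for general $U \in \hx$ rather than only $U \in \mcd$, which is precisely where the density of $\mcd$ in $\hx$ and the coercivity provided by Lemma \ref{lemma-norm} come into play.
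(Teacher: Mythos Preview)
Your proposal is correct and follows essentially the same strategy as the paper: for the direction $\oth^{\ga},\wtth^{\ga}\le\Theta^{\ga}$ you insert the canonical extension of a boundary competitor (via Lemma \ref{lemma-GQ-ext}), exactly as the paper does by writing the scattering solution $v_m = F\rho^{n-s}+G\rho^s$ and setting $U_m=\rho^{-(n-s)}v_m$; for the reverse direction you reprove the energy inequality $I^{\ga}(U)\ge\int_M uP^{\ga}_{\hh}u\,dv_{\hh}$ by the orthogonal decomposition $U=U_u+V$, whereas the paper simply invokes Case's result \cite[Theorem 1.1]{Cs} as a black box. Your route is therefore more self-contained but not genuinely different---the decomposition argument is precisely how Case establishes that inequality. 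One caveat: Lemmas \ref{lemma-norm} and \ref{lemma-GQ-ext} are stated only for $\ga\in(0,1)\setminus\{\tfrac12\}$, so when $\ga=\tfrac12$ you should note separately that for zero-trace $V$ the $H$-boundary term in \eqref{eq-I^ga} drops out and the remaining bulk energy is nonnegative by the same mechanism (or simply by classical elliptic theory, since the weight disappears).
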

\begin{proof}
By the condition $\Lambda^{\ga}(M, [\hh]) > 0$, \cite[Theorem 1.1]{Cs} and the density argument,
\[0 \le \Theta^{\ga}(\beta, f) \le \min \left\{ \oth^{\ga}(\beta, f), \wtth^{\ga}(\beta, f) \right\}.\]

Suppose that $\{u_m\}_{m \in \mn} \subset C^{\infty}(M)$ is a minimizing sequence of $\Theta^{\ga}(\beta, f)$.
By virtue of \eqref{eq-eig}, for each $u_m$, the eigenvalue problem
\[-\Delta_{g^+} v - s(n-s) v = 0 \quad \text{in } X, \quad s := {n \over 2} + \ga\]
has a solution of the form
\begin{equation}\label{eq-v_m-form}
v_m = F \rho^{n-s} + G \rho^s, \quad F, G \in C^{\infty}(\ox), \quad F|_{\rho = 0} = u_m
\end{equation}
where $\rho$ is the geodesic defining function associated to $(M, \hh)$; see for instance \cite{MM, JS, GZ}.
In \cite{CG}, it is proved that $U_m := \rho^{-(n-s)} v_m \in \hx$ is a solution of \eqref{eq-ext-1} with $u = u_m$ and $\pa^{\ga}_{\nu} U_m = P^{\ga}_{\hh} u_m$ on $M$.
Thus, by putting $U_m$ in \eqref{eq-ext-1}, we observe
\[\oth^{\ga}(\beta, f) \le I^{\ga}(U_m) = \int_M u_m P^{\ga}_{\hh} u_m dv_{\hh} \to \Theta^{\ga}(\beta, f).\]
Similarly, we have that $\wtth^{\ga}(\beta, f) \le \Theta^{\ga}(\beta, f)$. This finishes the proof.
\end{proof}
If $H = 0$ on $M$ and $U \in \mcc_{\beta, f}$ achieves the infimum $\oth^{\ga}(\beta, f)$, then it solves
\begin{equation}\label{eq-sub-ext-2}
\begin{cases}
-\text{div}_{\bg} (\rho^{1-2\ga} \nabla U) + E(\rho)U = 0 &\text{in } (X, \bg),\\
\pa^{\ga}_{\nu} U = \oth^{\ga}(\beta, f) f |U|^{\beta-1} U &\text{on } M
\end{cases}
\end{equation}
in the weak sense. Since $|U|$ also attains $\oth^{\ga}(\beta, f)$, we may assume that $0 \le U \ne 0$ in $X$.
Then Remark \ref{rmk-reg-1} below implies that $U$ is in fact positive in $\ox$ and $\oth^{\ga}(\beta, f) > 0$; refer also to the Hopf lemma in \cite[Theorem 3.5, Corollary 3.6]{GQ}.
Therefore a constant multiple of $U$ gives a positive solution of \eqref{eq-sub-ext}.
In the next subsection, we shall provide a criterion which guarantees the existence of a minimizer $U \in \mcc_{\beta, f}$.

\subsection{Subcritical approximation}\label{subsec-sub}
The main goal of this subsection is to show
\begin{prop}\label{prop-cpt}
Suppose that \eqref{eq-eig} holds, $H = 0$ for $\ga \in (\frac{1}{2}, 1)$, $\Lambda^{\ga}(M, [\hh]) > 0$ and $f$ is a smooth function positive somewhere on $M$.
For any $U \in \mcc_{2^*, f}$, it is valid that
\begin{equation}\label{eq-cpt}
\(\max_{x \in M} f(x)\)^{n-2\ga \over n} \Theta^{\ga}(2^*, f) \le \ola^{\ga}(\mh^N, [\hh_c])
\end{equation}
where $\Theta^{\ga}(2^*, f)$ and $\ola^{\ga}(\mh^N, [\hh_c])$ are quantities defined in \eqref{eq-labf} and \eqref{eq-H^n-ene}, respectively.
Moreover, if the strict inequality in \eqref{eq-cpt} holds, there exists a function $U_{2^*} \in \mcc_{2^*, f}$ attaining $\oth^{\ga}(2^*, f)$.
It can be chosen to be positive on $\ox$, and so it gives a weak solution to \eqref{eq-ps-ext}.
\end{prop}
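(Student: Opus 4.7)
The plan is to establish the upper bound \eqref{eq-cpt} by a standard test-function computation and then, under the strict form of \eqref{eq-cpt}, to extract a minimizer $U_{2^*}$ by a subcritical approximation combined with a concentration-compactness dichotomy at the boundary $M$.

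\emph{Upper bound.} Let $y_0 \in M$ satisfy $f(y_0) = \max_M f > 0$, and introduce the Fermi coordinate $x = (\bx, x_N)$ on $\ox$ centered at $y_0$. For small $\lambda > 0$ I would transplant the bubble $W_{\lambda, 0}$ from \eqref{eq-bubble} onto $X$ via $U_{\lambda}(x) = \eta(x) W_{\lambda, 0}(x)$ with $\eta \in C^{\infty}_c(B^N_+(0, r_1))$ equal to $1$ near $0$. Using Lemma \ref{lemma-bg-exp} to expand $\bg$ and $\sqrt{|\bg|}$, the pointwise decay of $W_{\lambda, 0}$, and the bound $|E(\rho)| \le C \rho^{-2\ga}$ recalled in Remark \ref{rmk-E}, I expect as $\lambda \to 0$
\begin{align*}
I^{\ga}(U_{\lambda}) &= \kappa_{\ga} \int_{\mr^N_+} x_N^{1-2\ga} |\nabla W_{\lambda, 0}|^2 dx + o(1), \\
\int_M f\, U_{\lambda}^{2^*+1} dv_{\hh} &= f(y_0) \int_{\mr^n} w_{\lambda, 0}^{2^*+1} d\bx + o(1).
\end{align*}
After normalizing and invoking \eqref{eq-H^n-ene}, the ratio $I^{\ga}(U_{\lambda})/(\int_M f U_{\lambda}^{2^*+1} dv_{\hh})^{(n-2\ga)/n}$ tends to $f(y_0)^{-(n-2\ga)/n} \ola^{\ga}(\mh^N, [\hh_c])$, which combined with Lemma \ref{lemma-la} yields \eqref{eq-cpt}.

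\emph{Subcritical minimizers.} For each $\beta \in (1, 2^*)$ I produce a minimizer $U_{\beta} \in \mcc_{\beta, f}$ for $\oth^{\ga}(\beta, f)$ by the direct method: Lemma \ref{lemma-norm} furnishes coercivity of $I^{\ga}$ on $\hx$, and the compact trace embedding $\hx \hookrightarrow L^{\beta+1}(M)$ makes $\mcc_{\beta, f}$ weakly closed. Rescaling any fixed element of $\mcc_{2^*, f}$ so that it lies in $\mcc_{\beta, f}$ shows $\limsup_{\beta \to 2^*}\oth^{\ga}(\beta, f) \le \oth^{\ga}(2^*, f)$, so $\{U_{\beta}\}$ is uniformly bounded in $\hx$ by Lemma \ref{lemma-norm}. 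Choose $\beta_k \nearrow 2^*$ and extract a weak limit $U_{\beta_k} \rightharpoonup U_{2^*}$ in $\hx$ with boundary trace $u_{2^*}$.

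\emph{Ruling out concentration.} The main hurdle is the loss of the critical trace embedding $\hx \hookrightarrow L^{2^*+1}(M)$. Here I would invoke a P.--L. Lions-type concentration-compactness principle adapted to the boundary: writing $\alpha := \int_M f|u_{2^*}|^{2^*+1} dv_{\hh} \in [0,1]$, any loss of mass is captured by at most countably many points $\{y_j\} \subset M$ with weights $\nu_j > 0$ and boundary energy concentrations $\mu_j$ satisfying
\[\mu_j \ge \ola^{\ga}(\mh^N, [\hh_c])\, \nu_j^{(n-2\ga)/n}, \qquad \sum_j f(y_j) \nu_j = 1 - \alpha,\]
which come from rescaling around each $y_j$ together with \eqref{eq-sob-t} and \eqref{eq-H^n-ene}. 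Combining the energy splitting $\oth^{\ga}(2^*, f) = \lim_k I^{\ga}(U_{\beta_k}) \ge I^{\ga}(U_{2^*}) + \sum_j \mu_j$, the bound $I^{\ga}(U_{2^*}) \ge \alpha^{(n-2\ga)/n}\oth^{\ga}(2^*, f)$ (obtained by normalizing $U_{2^*}$ to lie in $\mcc_{2^*, f}$ when $\alpha > 0$), $f(y_j) \le \max_M f$, and the subadditivity $\sum_j \nu_j^{(n-2\ga)/n} \ge (\sum_j \nu_j)^{(n-2\ga)/n}$, I reach
\[\oth^{\ga}(2^*, f) \ge \alpha^{(n-2\ga)/n} \oth^{\ga}(2^*, f) + \ola^{\ga}(\mh^N, [\hh_c]) (\max_M f)^{-(n-2\ga)/n} (1-\alpha)^{(n-2\ga)/n}.\]
The elementary inequality $\alpha^{(n-2\ga)/n} + (1-\alpha)^{(n-2\ga)/n} \ge 1$ on $[0,1]$ and the strict form of \eqref{eq-cpt} together force $\alpha = 1$: the traces converge strongly in $L^{2^*+1}(M)$ and $U_{2^*} \in \mcc_{2^*, f}$ realizes $\oth^{\ga}(2^*, f)$. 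Replacing $U_{2^*}$ by $|U_{2^*}|$, the maximum principle and the Hopf-type boundary lemma for the degenerate equation \eqref{eq-sub-ext-2} (cf.\ \cite[Theorem 3.5, Corollary 3.6]{GQ}) upgrade nonnegativity to positivity on $\ox$; a suitable scalar multiple then provides a weak solution to \eqref{eq-ps-ext}.
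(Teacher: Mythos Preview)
Your upper-bound argument and the use of subcritical minimizers match the paper. The difference is in how you pass to the critical limit: you invoke a Lions-type concentration--compactness decomposition, while the paper combines a sharp Sobolev-trace inequality (the Jin--Xiong inequality, \eqref{eq-est-1}) with a separate blow-up analysis on the set $\mck=\{f\le 0\}$ (Lemma~\ref{lemma-wtu-bdd}) and a left-continuity lemma for $\wtth^{\ga}(\beta,f)$ (Lemma~\ref{lemma-wtth-conv-2}). Both routes are standard in principle, but your sketch leaves a genuine gap precisely where $f$ changes sign, which the paper flags as the main difficulty (Remark~\ref{rmk-reg-1} is not the relevant one here; see the remark following the statement of Proposition~\ref{prop-cpt}).

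Concretely, you assert $\alpha=\int_M f|u_{2^*}|^{2^*+1}\,dv_{\hh}\in[0,1]$ and $\sum_j f(y_j)\nu_j=1-\alpha$ without justification. Neither is automatic. First, since $f$ is sign-changing, nothing a priori prevents $\alpha<0$, which would make $\alpha^{(n-2\ga)/n}$ meaningless. Second, the constraint you start from is $\int_M f|u_{\beta_k}|^{\beta_k+1}=1$ with $\beta_k<2^*$, not $\int_M f|u_{\beta_k}|^{2^*+1}=1$; to pass to the latter (and hence to the mass identity $\alpha+\sum_j f(y_j)\nu_j=1$) you must control the difference on $\{f<0\}$, where the integrand has no sign. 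Third, your dichotomy needs all concentration points to satisfy $f(y_j)>0$; otherwise the chain $1-\alpha=\sum_j f(y_j)\nu_j\le (\max_M f)\sum_j\nu_j$ and the final inequality can break down. The paper resolves all three issues simultaneously by proving, via a blow-up argument using the classification of \eqref{eq-limit}, that the subcritical minimizers are uniformly bounded in $C^{\alpha}$ on a neighborhood of $\mck$ (Lemma~\ref{lemma-wtu-bdd}); this rules out concentration where $f\le 0$, yields left-continuity of $\wtth^{\ga}(\beta,f)$ at $2^*$ (Lemma~\ref{lemma-wtth-conv-2}), and ultimately forces $\mu_1=\int_M f\,\wtu_{2^*}^{2^*+1}=1$. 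Your argument can be repaired along the same lines, but as written the step ``$\alpha\in[0,1]$'' hides the heart of the proof.
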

\begin{rmk}
(1) Escobar and Schoen \cite{ES}, and Escobar \cite{Es} gave the proof of the above result for $\ga = 1$ and $\frac{1}{2}$, which is rather sketchy.
Aubin \cite{Au} also proved it provided that $\ga =1$ and $f$ is positive everywhere on $M$.

\medskip \noindent
(2) Setting $f = 1$ recovers the solvability criterion for the fractional Yamabe problem, which appeared firstly in Gonz\'alez and Qing \cite[Theorem 1.4]{GQ}.
Our argument is a bit more complicated than that of \cite{GQ}, because we allow the situation that $f$ attains a negative value.
See the proof of Lemmas \ref{lemma-wtu-bdd} and \ref{lemma-wtth-conv-2} below.

\medskip \noindent
(3) Suppose that
\begin{equation}\label{eq-str}
- \infty < \ola^{\ga}(X, [\hh]) < \ola^{\ga}(\mh^N, [\hh_c])
\end{equation}
holds, which is the case when one of the conditions (a)-(d) in Theorem \ref{thm-Y} is true.
Then there exists a small number $\ep > 0$ which depends only on the underlying manifold $(X^{n+1}, g^+)$, its boundary $(M^n, [\hh])$ and $\ga \in (0,1)$
such that if $f$ is a positive function satisfying $\sup_M f \le (1+\ep) \inf_M f$,
then the strict inequality in \eqref{eq-cpt} holds.
To verify it, one may estimate $\wtth^{\ga}(2^*,f)$ with the constant function $(\int_M f dv_{\hh})^{-1/(2^*+1)} \in \mcc_{2^*,f}$.
This argument was given by Aubin \cite{Au} for $\ga = 1$.
\end{rmk}

The former part of Proposition \ref{prop-cpt} can be deduced immediately.
Assume that there exists a number $r_2 > 0$ small enough so that for each fixed point $y \in M$, the Fermi coordinate around $y$ is well-defined in its $(2r_2)$-geodesic neighborhood in $\ox$.
Let also $\chi$ be a smooth radial cut-off function in $\overline{\mr^N_+}$ which satisfies
\begin{equation}\label{eq-chi}
0 \le \chi \le 1 \quad \text{on } \overline{\mr^N_+} \quad \text{and} \quad \chi = \begin{cases}
1 &\text{in } \overline{B^N_+(0,1)},\\
0 &\text{outside } \overline{B^N_+(0,2)}.
\end{cases}
\end{equation}
\begin{proof}[Proof of Proposition \ref{prop-cpt}: Derivation of \eqref{eq-cpt}]
We will adapt the proof of \cite[Theorem 1.4]{GQ}.
Choose $y \in M$ such that $f(y) = \max_{x \in M} f(x) > 0$ and consider the Fermi coordinate on $\ox$ around $y$, identifying $y \in M$ with $0 \in \mr^n$.
If $\chi_{r_2} := \chi(\cdot/r_2) \in C^{\infty}_c(\overline{\mr^N_+})$, then for small $\ep > 0$,
\[0 < \mu_0 := \int_{B^n(0,2r_2)} f (\chi_{r_2} w_{\ep,0})^{2^*+1} \sqrt{|\hh|} d\bx = f(y) \int_{\mr^n} w_{1,0}^{2^*+1} d\bx + o(1)\]
where $o(1) \to 0$ as $\ep \to 0$. Hence $U_{\ep} := \mu_0^{-(2^*+1)} \chi_{r_2} W_{\ep, 0} \in \mcc_{2^*,f}$, and by \eqref{eq-H^n-ene},
\begin{align*}
\(\max_{x \in M} f(x)\)^{n-2\ga \over n} \wtth^{\ga}(2^*, f)
&\le f(y)^{n-2\ga \over n} \mu_0^{-{n-2\ga \over n}} \kappa_{\ga} \int_{B^N_+(0,2r_2)} x_N^{1-2\ga} |\nabla (\chi_{r_2} W_{\ep,0})|^2 \sqrt{|\bg|} dx + o(1)\\
&= \(\int_{\mr^n} w_{1,0}^{2^*+1} d\bx\)^{2\ga \over n} + o(1) = \ola^{\ga}(\mh^N, [\hh_c]) + o(1).
\end{align*}
Taking $\ep \to 0$ and employing Lemma \ref{lemma-la}, we obtain \eqref{eq-cpt}.
\end{proof}
On the other hand, we need several preliminary lemmas to prove the latter part of Proposition \ref{prop-cpt}.

The next result follows from the standard variational argument with the compactness of the trace operator $\hx \hookrightarrow L^{\beta}(M)$ for $\beta \in (2, 2^*+1)$
and the strong maximum principle given in Remark \ref{rmk-reg-1}. We omit the proof.
\begin{lemma}\label{lemma-wtu}
Suppose that \eqref{eq-eig} holds, $H = 0$ for $\ga \in (\frac{1}{2}, 1)$, $\Lambda^{\ga}(M, [\hh]) > 0$ and $f$ is a smooth function positive somewhere on $M$.
For all $\beta \in (1, 2^*)$, there exists a positive minimizer $\wtu_{\beta} \in \mcc_{\beta, f}(M)$ of $\wtth^{\ga}(\beta, f)$, which solves
\begin{equation}\label{eq-sub-ext-1}
\begin{cases}
-\textnormal{div}_{\bg^*} \((\rho^*)^{1-2\ga} \nabla U\) = 0 &\text{in } (X, \bg^*),\\
\pa^{\ga}_{\nu} U = \wtth^{\ga}(\beta, f) f |U|^{\beta-1} U - Q^{\ga}_{\hh} U &\text{on } M;
\end{cases}
\end{equation}
refer to \eqref{eq-mcbf} and \eqref{eq-wtla}.
\end{lemma}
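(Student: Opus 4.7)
The plan is to apply the direct method of calculus of variations, exploiting the strict subcriticality $\beta < 2^*$ which gives compactness of the trace operator.

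First I would take a minimizing sequence $\{\wtu_{\beta,m}\}_{m \in \mn} \subset \mcc_{\beta,f}$ for $\wtth^{\ga}(\beta,f) = \inf J^{\ga}$. Using the identity $\Theta^{\ga}(\beta,f) = \wtth^{\ga}(\beta,f) \ge 0$ from Lemma \ref{lemma-la} and the equivalence between $\sqrt{J^{\ga}(\cdot)}$ and the standard norm on $\hx$ (analogous to Lemma \ref{lemma-norm}, valid because $\Lambda^{\ga}(M,[\hh]) > 0$, condition \eqref{eq-eig} holds, and $H = 0$ when $\ga \in (\frac{1}{2}, 1)$), the sequence $\{\wtu_{\beta,m}\}$ is bounded in $\hx$. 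Extract a subsequence with $\wtu_{\beta,m} \rightharpoonup \wtu_{\beta}$ weakly in $\hx$ for some $\wtu_{\beta} \in \hx$, with traces $u_{\beta,m} \rightharpoonup u_{\beta}$ weakly in $H^{\ga}(M)$.

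Next I would verify that $\wtu_{\beta}$ belongs to $\mcc_{\beta,f}$ and attains the infimum. Since $\beta + 1 < 2^* + 1$, the trace embedding $H^{\ga}(M) \hookrightarrow L^{\beta+1}(M)$ is compact (by the Rellich--Kondrachov-type result underlying the embedding $H^{\ga}(M) \hookrightarrow L^{2^*+1}(M)$), so $u_{\beta,m} \to u_{\beta}$ strongly in $L^{\beta+1}(M)$. Hence $\int_M f |u_{\beta}|^{\beta+1} dv_{\hh} = \lim_m \int_M f |u_{\beta,m}|^{\beta+1} dv_{\hh} = 1$, i.e., $\wtu_{\beta} \in \mcc_{\beta,f}$; in particular $\wtu_{\beta} \not\equiv 0$. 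Because $J^{\ga}$ is a quadratic form equivalent to the squared norm, it is weakly lower semi-continuous on $\hx$, so
\[
\wtth^{\ga}(\beta,f) \le J^{\ga}(\wtu_{\beta}) \le \liminf_{m \to \infty} J^{\ga}(\wtu_{\beta,m}) = \wtth^{\ga}(\beta,f),
\]
and $\wtu_{\beta}$ is a minimizer.

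To get positivity, I would first replace $\wtu_{\beta}$ by $|\wtu_{\beta}|$: since $||\wtu_{\beta}||_{\hx} = ||\wtu_{\beta}||_{\hx}$ in the sense that $|\nabla |\wtu_{\beta}|| = |\nabla \wtu_{\beta}|$ almost everywhere, and $|\wtu_{\beta}|$ remains in $\mcc_{\beta,f}$, it is still a minimizer. Writing the Euler--Lagrange equation with the Lagrange multiplier identified via Lemma \ref{lemma-la} yields precisely \eqref{eq-sub-ext-1}. Finally, the strong maximum principle referenced in Remark \ref{rmk-reg-1} (together with the Hopf-type lemma of \cite[Theorem 3.5, Corollary 3.6]{GQ}) upgrades $\wtu_{\beta} \ge 0$, $\wtu_{\beta} \not\equiv 0$ to $\wtu_{\beta} > 0$ on $\ox$.

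The only subtle step is the weak lower semicontinuity combined with the identification of the infimum: one must know that $J^{\ga}$ defines an equivalent norm so that boundedness of the minimizing sequence and lower semicontinuity both hold. Apart from that, everything is a direct consequence of compactness in the subcritical regime, which is why the authors state that they omit the proof.
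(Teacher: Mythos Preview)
Your argument is correct and is exactly the standard variational argument the paper alludes to (direct method using compactness of the subcritical trace embedding, followed by the strong maximum principle from Remark \ref{rmk-reg-1}); the paper omits the proof for this reason. Two small clarifications: the weak lower semicontinuity of $J^{\ga}$ is best justified by splitting it as the weighted Dirichlet energy (weakly l.s.c.) plus $\int_M Q^{\ga}_{\hh} U^2\,dv_{\hh}$ (which converges by compactness of the trace into $L^2(M)$), rather than by appealing to norm equivalence alone; and the Lagrange multiplier $\wtth^{\ga}(\beta,f)$ is identified simply by testing the Euler--Lagrange equation with $\wtu_{\beta}$ itself, not via Lemma \ref{lemma-la}.
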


The following lemma shows that $\wtth^{\ga}(\beta, f)$ is upper semi-continuous from the left at $\beta = 2^*$.
Note that finiteness of $\wtth^{\ga}(2^*, f)$ is guaranteed by \eqref{eq-cpt}.
\begin{lemma}\label{lemma-wtth-conv-1}
It holds that
\begin{equation}\label{eq-wtth-conv-1}
\limsup_{\beta \to 2^*-} \wtth^{\ga}(\beta, f) \le \wtth^{\ga}(2^*, f).
\end{equation}
\end{lemma}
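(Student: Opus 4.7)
The plan is a direct test-function argument: take a near-minimizer of $\wtth^{\ga}(2^*, f)$ and rescale it so that it lies in $\mcc_{\beta, f}$ for $\beta < 2^*$, then pass to the limit. Fix $\vep > 0$ and choose $U \in \mcc_{2^*, f}$ with $J^{\ga}(U) \le \wtth^{\ga}(2^*, f) + \vep$; this is possible because $\wtth^{\ga}(2^*, f) < +\infty$ by \eqref{eq-cpt}. The trace $u$ of $U$ on $M$ lies in $L^{2^*+1}(M)$, and since $M$ is compact and $f \in C^{\infty}(M)$, the function $f|u|^{\beta+1}$ belongs to $L^1(M)$ for every $\beta \in (1, 2^*]$. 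For such $\beta$ I set
\[
c_\beta := \(\int_M f|u|^{\beta+1}\, dv_{\hh}\)^{-{1 \over \beta+1}}
\]
whenever the integral inside is positive, and form the rescaled function $V_\beta := c_\beta U \in \hx$. By construction $V_\beta \in \mcc_{\beta, f}$, and because $J^{\ga}$ is $2$-homogeneous in view of \eqref{eq-J^ga}, one has $J^{\ga}(V_\beta) = c_\beta^2\, J^{\ga}(U)$.

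Next I would verify continuity of the constraint integral at $\beta = 2^*$. Since $|u|^{\beta+1} \le 1 + |u|^{2^*+1} \in L^1(M)$ for all $\beta \in (1, 2^*]$ and $|u(x)|^{\beta+1} \to |u(x)|^{2^*+1}$ pointwise a.e.\ on $M$ as $\beta \to 2^*-$, Lebesgue's dominated convergence theorem yields
\[
\lim_{\beta \to 2^*-} \int_M f|u|^{\beta+1}\, dv_{\hh} = \int_M f|u|^{2^*+1}\, dv_{\hh} = 1.
\]
In particular, $c_\beta$ is well-defined for $\beta$ sufficiently close to $2^*$ and $c_\beta \to 1$. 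Using $V_\beta$ as a test function in the definition \eqref{eq-wtla} of $\wtth^{\ga}(\beta, f)$ then gives
\[
\wtth^{\ga}(\beta, f) \le J^{\ga}(V_\beta) = c_\beta^2\, J^{\ga}(U) \longrightarrow J^{\ga}(U) \le \wtth^{\ga}(2^*, f) + \vep
\]
as $\beta \to 2^*-$, so $\limsup_{\beta \to 2^*-} \wtth^{\ga}(\beta, f) \le \wtth^{\ga}(2^*, f) + \vep$. Letting $\vep \to 0$ then finishes the proof.

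I do not anticipate any serious obstacle: the assertion is a standard upper-semicontinuity property of constrained infima under scaling, powered entirely by the homogeneity of $J^{\ga}$ together with dominated convergence. The only mildly delicate point is the positivity of the denominator $\int_M f|u|^{\beta+1}\, dv_{\hh}$ for $\beta$ close to $2^*$, which is immediate from its convergence to $1$. Note that no use of the eigenvalue condition \eqref{eq-eig}, the positivity of $\Lambda^{\ga}(M, [\hh])$, or the mean-curvature hypothesis $H = 0$ is made beyond what is already invoked to ensure that $\wtth^{\ga}(2^*, f)$ is a finite real number via Lemma \ref{lemma-la} and the bound \eqref{eq-cpt}.
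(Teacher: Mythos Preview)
Your proof is correct and follows essentially the same scaling/test-function argument as the paper. The only minor difference is that the paper works with test functions $U$ in the dense smooth class $\mcd$ (so the convergence $\int_M f|U|^{\beta+1}\,dv_{\hh} \to \int_M f|U|^{2^*+1}\,dv_{\hh}$ is immediate from boundedness of $U|_M$) and then takes the infimum over $\mcd$, whereas you pick a near-minimizer directly in $\hx$ and justify the same limit via dominated convergence; both routes are standard and equivalent here.
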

\begin{proof}
Let $\mcd$ be the function space introduced in \eqref{eq-mcd}. Observe that for each $U \in \mcd$ such that $\int_M f |U|^{\beta+1} dv_{\hh} > 0$, we have
\[\limsup_{\beta \to 2^*-} \wtth^{\ga}(\beta, f)
\le \limsup_{\beta \to 2^*-} {J^{\ga}(U) \over \(\int_M f |U|^{\beta+1} dv_{\hh}\)^{2 \over \beta+1}}
\le {J^{\ga}(U) \over \(\int_M f |U|^{2^*+1} dv_{\hh}\)^{n-2\ga \over n}}.\]
Taking the infimum over $U \in \mcd$, we deduce the desired inequality \eqref{eq-wtth-conv-1}.
\end{proof}

We also need a uniform regularity result on the family of functions $\wtu_{\beta}$.
\begin{lemma}\label{lemma-wtu-bdd}
Suppose that the assumptions in Lemma \ref{lemma-wtu} hold.
For each $\beta \in (1, 2^*)$, let $\wtu_{\beta} \in \mcc_{\beta, f}(M)$ be a positive minimizer of $\wtth^{\ga}(\beta, f)$.
If $\Lambda^{\ga}(M, [\hh]) > 0$, then there exists a constant $C > 0$ and small $\vep > 0$ such that
\begin{equation}\label{eq-wtu-bdd}
\sup_{\beta \in [2^*-\vep, 2^*)} \|\wtu_{\beta}\|_{H^{1,2}(X;\rho^{1-2\ga})} \le C
\quad \text{and} \quad
\sup_{\beta \in [2^*-\vep, 2^*)} \|\wtu_{\beta}\|_{C^{\alpha}(\mck)} \le C
\end{equation}
for some $\alpha \in (0,1)$ and $\mck := \{y \in M: f(y) \le 0\}$.
\end{lemma}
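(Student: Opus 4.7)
\textbf{First,} for the uniform $H^{1,2}(X; \rho^{1-2\ga})$ bound, I combine Lemma \ref{lemma-wtth-conv-1} with the first part of Proposition \ref{prop-cpt} to get, for some $\vep > 0$ and all $\beta \in [2^*-\vep, 2^*)$,
\[
J^\ga(\wtu_\beta) \;=\; \wtth^\ga(\beta, f) \;\le\; \wtth^\ga(2^*, f) + 1 \;\le\; C.
\]
The key observation is that $\wtu_\beta$ is the $J^\ga$-extension (in the sense of \eqref{eq-ext-3}) of its own trace $u_\beta := \wtu_\beta|_M$, because the interior equation in \eqref{eq-sub-ext-1} coincides with that in \eqref{eq-ext-3}. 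Testing with $\wtu_\beta$ yields $J^\ga(\wtu_\beta) = \int_M u_\beta P^\ga_{\hh} u_\beta\, dv_{\hh}$. Let $V_\beta \in \hx$ denote the $I^\ga$-extension of $u_\beta$, i.e., the solution of \eqref{eq-ext-1}. Testing its weak form with $V_\beta$ and using \eqref{eq-ext-2} gives $I^\ga(V_\beta) = \int_M u_\beta P^\ga_{\hh} u_\beta\, dv_{\hh} = J^\ga(\wtu_\beta) \le C$, and Lemma \ref{lemma-norm} then delivers $\|V_\beta\|_{H^{1,2}(X;\rho^{1-2\ga})} \le C$. Since Proposition \ref{prop-GQ-ext}(2) provides $\wtu_\beta = (\rho/\rho^*)^{(n-2\ga)/2} V_\beta$ and $\rho/\rho^* = 1 + O(\rho^{2\ga})$ is a smooth positive multiplier bounded above and below on $\ox$ by \eqref{eq-adaptive}, this transfers to the first estimate in \eqref{eq-wtu-bdd}.

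\textbf{Second,} for the H\"older bound on $\mck$, I localize to the open neighborhood $\mck_\delta := \{y \in M : f(y) < \delta\}$ of $\mck$, where $\delta > 0$ is a small parameter to be chosen. On $\mck_\delta$ the Neumann-type datum in \eqref{eq-sub-ext-1} satisfies
\[
\big|\wtth^\ga(\beta, f)\, f\, |\wtu_\beta|^{\beta-1} \wtu_\beta - Q^\ga_{\hh} \wtu_\beta\big| \;\le\; C\delta\, |\wtu_\beta|^\beta + C |\wtu_\beta|,
\]
so $\delta$ acts as a smallness parameter controlling the near-critical nonlinearity. Combining this with the uniform $L^{2^*+1}(M)$ bound on $u_\beta$, which follows from $J^\ga(\wtu_\beta) \le C$ together with $\wtla^\ga = \Lambda^\ga > 0$ and \eqref{eq-Y-const-2}, I would run a Moser/Brezis--Kato iteration for the degenerate-elliptic operator $\textnormal{div}_{\bg^*}((\rho^*)^{1-2\ga}\nabla \cdot)$ with weighted conormal boundary datum, localized through cutoffs supported in $\mck_\delta$; such schemes in the fractional/degenerate setting are essentially the ones developed by Jin--Li--Xiong \cite{JLX}, Cabr\'e--Sire \cite{CaS} and Felli--Ferrero \cite{FF}. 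The outcome is a uniform $L^\infty$ bound on $\wtu_\beta$ over a thinner neighborhood $\mck_{\delta/2}$ of $\mck$. Once the boundary datum is uniformly $L^\infty$-bounded there, the boundary H\"older theory for $A_2$-weighted degenerate-elliptic operators (Fabes--Kenig--Serapioni type, in the boundary variant used in the references above) yields the uniform $C^\alpha(\mck)$ estimate.

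\textbf{Main obstacle.} The principal technical challenge is carrying out the Moser iteration with constants independent of $\beta$ as $\beta \nearrow 2^*$. The subcritical gap $2^* - \beta$ degenerates in the limit, so strict subcriticality of the exponent cannot serve as the bootstrap device uniformly; the replacement mechanism must come from the smallness of $f$ on $\mck_\delta$ rather than from $\beta$ itself. The subtle point is that $\wtu_\beta$ may concentrate near global maximizers of $f$, which necessarily lie outside $\mck_\delta$, and one has to make sure that this far-away mass does not contaminate the iteration through the commutator terms produced by the localizing cutoffs. These error terms must be absorbed using the uniform $H^{1,2}$ bound from the first step and the uniform $L^{2^*+1}(M)$ bound on the trace, both of which are at our disposal.
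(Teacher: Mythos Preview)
Your first step matches the paper's in substance: both bound $J^\gamma(\wtu_\beta)$ via Lemma~\ref{lemma-wtth-conv-1} and convert to a uniform $H^{1,2}$ bound. One cosmetic point: the multiplier $(\rho/\rho^*)^{(n-2\gamma)/2}$ is only $C^{2\gamma}$ at the boundary, not smooth, so transferring the $H^{1,2}$ norm from $V_\beta$ to $\wtu_\beta$ requires the equivalence of $H^{1,2}(X;\rho^{1-2\gamma})$ and $H^{1,2}(X;(\rho^*)^{1-2\gamma})$ stated right after \eqref{eq-norm} (or a Hardy inequality), not just pointwise bounds on $\rho/\rho^*$. The paper sidesteps this by bounding $\int_X (\rho^*)^{1-2\gamma}|\nabla\wtu_\beta|^2_{\bg^*}\,dv_{\bg^*}$ directly from $J^\gamma(\wtu_\beta)\le 2\wtth^\gamma(2^*,f)$ and invoking an equivalent norm from \cite{CK}.

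For the $C^\alpha(\mck)$ bound the routes genuinely differ. Your Brezis--Kato/Moser scheme is viable: on $\mck_\delta$ the coefficient $s:=\wtth^\gamma(\beta,f)\,f\,\wtu_\beta^{\beta-1}$ has $\|s\|_{L^{n/(2\gamma)}}\le C\delta$ uniformly in $\beta$ (since $(\beta-1)n/(2\gamma)\le 2^*+1$ and $\|\wtu_\beta\|_{L^{2^*+1}(M)}$ is bounded by the first step), so Lemma~\ref{lemma-reg-1}(1) applies once $\delta$ is small, after which the gained integrability lands you in the regime $p_1>n/(2\gamma)$ of Lemma~\ref{lemma-reg-1}(2); the cutoff commutators are indeed absorbed by the uniform $H^{1,2}$ bound. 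The paper instead argues by blow-up: assuming $\|\wtu_{\beta_m}\|_{L^\infty(B_\eta(\mck))}\to\infty$, it rescales around the maxima and passes to a nontrivial limit $V_0$ solving $\partial^\gamma_\nu V_0=\wtth_0 f(y_0)V_0^{2^*}$ on $\mr^n$. If $f(y_0)\le 0$ a Liouville theorem forces $V_0\equiv 0$; if $0<f(y_0)\le\delta$ the classification \cite[Theorem~1.8]{JLX} makes $V_0$ a bubble with $\|V_0\|_{\dot H^{1,2}}\sim (\wtth_0 f(y_0))^{-(n-2\gamma)/(4\gamma)}$, contradicting the uniform energy bound once $\delta$ is small. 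Your iteration is more constructive and avoids the classification theorem; the paper's blow-up is shorter and makes transparent why smallness of $f$ near $\mck$ obstructs concentration there.
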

\begin{proof}
Let $\vep > 0$ be a sufficiently small number.
We see from \eqref{eq-Y-const-2} and \eqref{eq-wtth-conv-1} that
\begin{equation}\label{eq-coer}
\Lambda^{\ga}(M,[\hh]) \(\int_M \wtu_{\beta}^{2^*+1} dv_{\hh}\)^{n-2\ga \over n} \le J^{\ga}(\wtu_{\beta})
= \wtth^{\ga}(\beta,f) \le 2 \wtth^{\ga}(2^*, f)
\end{equation}
for $\beta \in [2^*-\vep, 2^*)$. Thus
\[\int_X (\rho^*)^{1-2\ga} |\nabla \wtu_{\beta}|_{\bg^*}^2 dv_{\bg^*} \le 2 \wtth^{\ga}(2^*, f) + \|Q^{\ga}_{\hh}\|_{L^{\infty}(M)} \int_M \wtu_{\beta}^2 dv_{\hh}\]
and the right-hand side is uniformly bounded in $\beta \in [2^*-\vep, 2^*)$.
On the other hand, \cite[Lemma 3.1]{CK} implies that the norm
\[\(\int_X (\rho^*)^{1-2\ga} |\nabla U|_{\bg^*}^2 dv_{\bg^*} + \int_M U^2 dv_{\hh}\)^{\frac{1}{2}} \quad \text{for } U \in \hx\]
is equivalent to the standard $\hx$-norm. As a result, we establish the first inequality in \eqref{eq-wtu-bdd}.

\medskip
To deduce the second inequality in \eqref{eq-wtu-bdd}, it suffices to verify that there exists an $\eta$-neighborhood $B_{\eta}(\mck) \subset M$ of $\mck$ such that
\begin{equation}\label{eq-wtu-bdd-2}
\sup_{\beta \in [2^*-\vep, 2^*)} \|\wtu_{\beta}\|_{L^{\infty}(B_{\eta}(\mck))} \le C.
\end{equation}
Then together with the De Giorgi-Nash-Moser estimate stated in Lemma \ref{lemma-reg-1} (2), one can get the $C^{\alpha}(\mck)$-uniform estimate for $\{\wtu_{\beta}\}_{\beta \in [2^*-\vep, 2^*)}$ for some $\alpha \in (0,1)$.

We will apply the blow-up argument close to the proof of \cite[Theorem 1.4]{GQ}.
Suppose that there exist sequences $\beta_m \to 2^*$, $\wtu_m := \wtu_{\beta_m}$ and $y_m \in B_{\eta}(\mck)$ such that
\[M_m := \wtu_m(y_m) = \|\wtu_m\|_{L^{\infty}(B_{\eta}(\mck))} \to \infty,\quad \wtth^{\ga}(\beta_m,f) \to \wtth_0 \le \wtth^{\ga}(2^*,f)\]
and $y_m \to y_0 \in \overline{B_{\eta}(\mck)}$ as $m \to \infty$. Take a Fermi coordinate system around $y_0$, identified with $0 \in \mr^N_+$, and define
\[\wtv_m(x) = M_m^{-1} \wtu_m (\delta_m \bx + y_m, \delta_m x_N) \quad \text{for } x = (\bx, x_N) \in B^N_+(0,2r_2/\delta_m)\]
where $\delta_m := M_m^{-\frac{\beta_m-1}{2\ga}}$ and $r_2 > 0$ is a small number.
Let also $\chi_{r_2/\delta_m} = \chi(\delta_m \cdot/r_2)$ where $\chi \in C^{\infty}_c(\overline{\mr^N_+})$ is an arbitrary function such that \eqref{eq-chi} holds.
Then $V_m := \chi_{r_2/\delta_m} \wtv_m$ solves
\[\begin{cases}
-\text{div}_{\bg^*_m} \((\rho^*_m)^{1-2\ga} \nabla V_m\) = 0 &\text{in } B^N_+(0,r_2/\delta_m)\\
\pa^{\ga}_{\nu} V_m = \wtth^{\ga}(\beta_m, f) f_m V_m^{\beta_m} - (Q^{\ga}_{\hh})_m \delta_m^{2\ga} V_m &\text{on } B^n(0,r_2/\delta_m)
\end{cases} \]
where
\[\bg^*_m(\bx, x_N) := \bg^*(\delta_m \bx + y_m, \delta_m x_N), \quad \rho^*_m(\bx, x_N) := \rho^*(\delta_m \bx + y_m, \delta_m x_N),\]
and $f_m$ and $(Q^{\ga}_{\hh})_m$ are similarly defined. Also,
\begin{equation}\label{eq-V_m}
\begin{aligned}
\int_{B^N_+(0,2r_2/\delta_m)} x_N^{1-2\ga} |\nabla V_m|^2 dx
&\le C \int_{B^N_+(0,2r_2/\delta_m)} x_N^{1-2\ga} \(|\nabla \wtv_m|^2 + |\nabla \chi_{r_2/\delta_m}|^2 \wtv_m^2\) dx \\
&\le C M_m^{{(n-2\ga)(\beta_m-1) \over 2\ga}-2} \|\wtu_m\|_{\hx}^2 \le C_0
\end{aligned}
\end{equation}
for some constant $C_0 > 0$ where the exponent of $M_m$ is always negative since $\beta_m < 2^*$.
Therefore $V_m \to V_0$ strongly in $C^{\alpha'}_{\text{loc}} (\overline{\mr^N_+})$ and weakly $\dot{H}^{1,2}(\mr^N_+; x_N^{1-2\ga})$ for some nonzero bounded function $V_0$ and $\alpha' \in (0,1)$.
It is easy to check that $V_0$ is a solution to
\begin{equation}\label{eq-limit}
\begin{cases}
-\text{div}(x_N^{1-2\ga} \nabla V_0) = 0 &\text{in } \mr^N_+,\\
\pa^{\ga}_{\nu} V_0 = \wtth_0 f(y_0) V_0^{n+2\ga \over n-2\ga} &\text{on } \mr^n.
\end{cases}
\end{equation}
If $\wtth_0 = 0$ or $f(y_0) \le 0$, a contradiction immediately arises since it should hold that $V_0 = 0$ in $\mr^N_+$.
Suppose that $\wtth_0 > 0$ and $f(y_0) > 0$.
For any $\delta > 0$, one can select small $\eta > 0$ so that $f(y) \le \delta$ for any $y \in B_{\eta}(\mck)$.
By the classification theorem \cite[Theorem 1.8]{JLX} of Eq. \eqref{eq-limit},
we know that $V_1 := (\wtth_0 f(y_0))^{n-2\ga \over 4\ga} V_0$ is the bubble $W_{\lambda, 0}$ for some $\lambda > 0$. Consequently,
\begin{align*}
C_0 \ge \|V_0\|_{\dot{H}^{1,2}(\mr^N_+;x_N^{1-2\ga})} &= (\wtth_0 f(y_0))^{-{n-2\ga \over 4\ga}} \|W_{\lambda,0}\|_{\dot{H}^{1,2}(\mr^N_+;x_N^{1-2\ga})} \\
&\ge \(\wtth^{\ga}(2^*,f) \delta\)^{-{n-2\ga \over 4\ga}} \|W_{1,0}\|_{\dot{H}^{1,2}(\mr^N_+;x_N^{1-2\ga})},
\end{align*}
which is a contradiction to \eqref{eq-V_m} provided $\delta > 0$ small enough.
Hence \eqref{eq-wtu-bdd-2} is true.
\end{proof}

If $\Lambda^{\ga}(M, [\hh]) > 0$, we are able to improve Lemma \ref{lemma-wtth-conv-1} by showing the $\wtth^{\ga}(\beta, f)$ is continuous from the left at $\beta = 2^*$.
Unlike the fractional Yamabe problem, it is less clear here due to the negative part of $f$.
\begin{lemma}\label{lemma-wtth-conv-2}
Under the assumption in Lemma \ref{lemma-wtu}, we have
\[\lim_{\beta \to 2^*-} \wtth^{\ga}(\beta, f) = \wtth^{\ga}(2^*, f).\]
\end{lemma}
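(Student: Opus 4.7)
The upper bound $\limsup_{\beta \to 2^*-} \wtth^{\ga}(\beta, f) \le \wtth^{\ga}(2^*, f)$ is already in hand via Lemma \ref{lemma-wtth-conv-1}, so it suffices to prove $\liminf_{\beta \to 2^*-} \wtth^{\ga}(\beta, f) \ge \wtth^{\ga}(2^*, f)$. The plan is to extract a weak limit from the family of positive subcritical minimizers and compare its energy against the critical threshold.

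Fix a sequence $\beta_m \nearrow 2^*$ and let $\wtu_m := \wtu_{\beta_m} \in \mcc_{\beta_m, f}$ be the positive minimizers from Lemma \ref{lemma-wtu}, so that $J^{\ga}(\wtu_m) = \wtth^{\ga}(\beta_m, f)$. Lemma \ref{lemma-wtu-bdd} provides a uniform $\hx$-bound and a uniform $C^{\alpha}$-bound on a neighborhood of $\mck = \{y \in M : f(y) \le 0\}$. Passing to a subsequence, $\wtu_m \rightharpoonup \wtu_0$ weakly in $\hx$, strongly in $L^q(M)$ for every $q \in [1, 2^*+1)$, a.e.\ on $M$, and uniformly on a neighborhood of $\mck$; in particular $\wtu_0 \ge 0$. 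Decomposing
\[
1 = \int_M f\, \wtu_m^{\beta_m+1}\, dv_{\hh} = \int_{\mck} f\, \wtu_m^{\beta_m+1}\, dv_{\hh} + \int_{M \setminus \mck} f\, \wtu_m^{\beta_m+1}\, dv_{\hh},
\]
uniform convergence handles the first piece, while Fatou's lemma applied to the nonnegative integrand on the second piece yields $\int_M f\, \wtu_0^{2^*+1}\, dv_{\hh} \le 1$.

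If equality holds here, then $\wtu_0 \in \mcc_{2^*, f}$, and the weak lower semi-continuity of $J^{\ga}$---obtained from Lemma \ref{lemma-norm} transferred via \eqref{eq-Y-const-2}, together with strong $L^2(M)$-convergence controlling the boundary term $\int_M Q^{\ga}_{\hh} U^2\, dv_{\hh}$---supplies $\wtth^{\ga}(2^*, f) \le J^{\ga}(\wtu_0) \le \liminf_m \wtth^{\ga}(\beta_m, f)$, finishing the proof. Otherwise, mass is lost to concentration at finitely many points $\{y_j\}$, which by the $C^\alpha$-control on $\mck$ must satisfy $f(y_j) > 0$. I would then adapt the blow-up rescaling from the proof of Lemma \ref{lemma-wtu-bdd} at each $y_j$ and invoke the classification \cite[Theorem 1.8]{JLX} of the limiting profiles as standard bubbles, extracting positive numbers $c_j$ such that $a_0 + \sum_j f(y_j) c_j = 1$ with $a_0 := \int_M f\, \wtu_0^{2^*+1}\, dv_{\hh}$ and a corresponding energy split
\[
\liminf_m J^{\ga}(\wtu_m) \ge J^{\ga}(\wtu_0) + \sum_j \ola^{\ga}(\mh^N, [\hh_c])\, c_j^{(n-2\ga)/n}.
\]
The bound \eqref{eq-cpt} replaces each $\ola^{\ga}(\mh^N, [\hh_c])$ by $\wtth^{\ga}(2^*, f)\, f(y_j)^{(n-2\ga)/n}$; combined with $J^{\ga}(\wtu_0) \ge \wtth^{\ga}(2^*, f)\, \max\{a_0, 0\}^{(n-2\ga)/n}$ and the elementary subadditivity $\sum x_j^p \ge (\sum x_j)^p$ valid for $p = (n-2\ga)/n \in (0, 1)$ and $x_j \ge 0$, this produces $\liminf_m \wtth^{\ga}(\beta_m, f) \ge \wtth^{\ga}(2^*, f)$.

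The chief obstacle is executing the concentration-compactness decomposition when the integration exponent $\beta_m+1$ itself varies with $m$: the standard concentration-compactness lemma is formulated at a fixed critical exponent, and the sub-to-critical passage produces an awkward factor $\wtu_m^{2^*-\beta_m}$, which at the concentration scale $\delta_m = \|\wtu_m\|_{L^{\infty}}^{-(\beta_m-1)/(2\ga)}$ introduces a prefactor $\|\wtu_m\|_{L^{\infty}}^{(2^*-\beta_m)(n-2\ga)/(2\ga)}$ that must be bounded to ensure the limit captures the full bubble mass. Tracking this prefactor carefully, or else reducing to the cleaner no-concentration regime via the direct rescaling $c_m \wtu_m \in \mcc_{2^*, f}$ with $c_m \to 1$, is the delicate part of the argument.
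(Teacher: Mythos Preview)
Your concentration-compactness route is vastly more complicated than necessary, and the obstacle you flag---controlling the bubble-mass accounting when the exponent $\beta_m$ itself varies---is genuine and left unresolved in your sketch; as written, the proposal is incomplete precisely at that point. The paper avoids all of this with a two-line argument that you in fact allude to in your last sentence but never pursue: use the subcritical minimizer $\wtu_{\beta}$ directly as a competitor for $\wtth^{\ga}(2^*, f)$.

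The only thing to check is the one-sided bound
\[
\int_M f\, \wtu_{\beta}^{2^*+1}\, dv_{\hh} \ \ge\ \int_M f\, \wtu_{\beta}^{\beta+1}\, dv_{\hh} - \delta \ =\ 1 - \delta
\]
for $\beta$ close to $2^*$, and this follows from H\"older's inequality together with the uniform $C^{\alpha}(\mck)$ bound of Lemma~\ref{lemma-wtu-bdd}. Indeed, on $\mck=\{f\le 0\}$ the functions $\wtu_{\beta}$ are uniformly bounded, so $\int_{\mck} f\,\wtu_{\beta}^{2^*+1}$ and $\int_{\mck} f\,\wtu_{\beta}^{\beta+1}$ differ by $o(1)$; on $\{f>0\}$ H\"older (or simply the pointwise inequality $\wtu_{\beta}^{2^*+1}\ge \wtu_{\beta}^{\beta+1}$ where $\wtu_{\beta}\ge 1$, together with the trivial uniform bound where $\wtu_{\beta}<1$) gives $\int_{\{f>0\}} f\,\wtu_{\beta}^{2^*+1} \ge \int_{\{f>0\}} f\,\wtu_{\beta}^{\beta+1} - o(1)$. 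Then
\[
\wtth^{\ga}(2^*, f) \ \le\ \frac{J^{\ga}(\wtu_{\beta})}{\bigl(\int_M f\,\wtu_{\beta}^{2^*+1}\,dv_{\hh}\bigr)^{(n-2\ga)/n}} \ \le\ (1-\delta)^{-(n-2\ga)/n}\,\wtth^{\ga}(\beta,f),
\]
and letting $\delta\to 0$ finishes the proof. No weak limits, no blow-up analysis, no bubble decomposition, no subadditivity. Only the \emph{lower} bound on $\int_M f\,\wtu_{\beta}^{2^*+1}$ is needed, which is exactly why the $L^{\infty}$ control on $\mck$ was established in Lemma~\ref{lemma-wtu-bdd} in the first place.
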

\begin{proof}
Let $\wtu_{\beta} \in \mcc_{\beta, f}$ be the positive minimizer of $\wtth^{\ga}(\beta, f)$.
Then we infer from H\"older's inequality and Lemma \ref{lemma-wtu-bdd} that for any given $\delta > 0$, there exists a small number $\vep > 0$ such that
\[\int_M f \wtu_{\beta}^{2^*+1} dv_{\hh} \ge \int_M f \wtu_{\beta}^{\beta+1} dv_{\hh} - \delta = 1 - \delta\]
for all $\beta \in [2^*-\vep, 2^*)$. Therefore we have
\[\wtth^{\ga}(2^*, f) \le {J^{\ga}(U) \over (\int_M f \wtu_{\beta}^{2^*+1} dv_{\hh})^{n-2\ga \over n}} \le (1-\delta)^{-{n-2\ga \over n}} \wtth^{\ga}(\beta, f)\]
for $\beta \in [2^*-\vep, 2^*)$. Taking $\delta \to 0$, we prove the assertion.
\end{proof}

We are now ready to conclude the proof of Proposition \ref{prop-cpt}.
\begin{proof}[Completion of the proof of Proposition \ref{prop-cpt}]
By \eqref{eq-wtu-bdd}, there exists a nonnegative function $\wtu_{2^*} \in \hx$ such that $\wtu_{\beta} \rightharpoonup \wtu_{2^*}$ weakly in $\hx$.
Thanks to Lemma \ref{lemma-reg-1} (2), it is not hard to see that $\wtu_{2^*}$ is H\"older continuous on $\ox$ and solves \eqref{eq-sub-ext-1} with $\beta = 2^*$.
Besides, by the strong maximum principle in Remark \ref{rmk-reg-1}, we have that $\wtu_{2^*} > 0$ on $\ox$ unless it is trivial.

We claim that $\wtu_{2^*}$ is nonzero provided that the strict inequality in \eqref{eq-cpt} holds.
As in \cite[Proposition 2.5]{JX}, one can prove that for any $\ep > 0$, there exists $A(\ep) > 0$ such that
\[\(\int_M |U|^{2^*+1} dv_{\hh}\)^{2 \over 2^*+1} \le (1 + \ep) S_{n,\ga} \int_X (\rho^*)^{1-2\ga} |\nabla U|^2_{\bg^*} dv_{\bg^*} + A(\ep) \int_M U^2 dv_{\hh}\]
for any $U \in \hx$. Since $J^{\ga}(\wtu_{\beta}) = \wtth^{\ga}(\beta, f)$, it follows that
\begin{equation}\label{eq-est-1}
\(\int_M \wtu_{\beta}^{2^*+1} dv_{\hh}\)^{2 \over 2^*+1} \le (1 + \ep) S_{n,\ga} \kappa_{\ga}^{-1} \wtth^{\ga}(\beta, f) + A'(\ep) \int_M \wtu_{\beta}^2 dv_{\hh}
\end{equation}
where $A'(\ep) := (S_{n,\ga} + \ep) \kappa_{\ga}^{-1} \|Q^{\ga}_{\hh}\|_{L^{\infty}(M)} + A(\ep)$.
Meanwhile, we get from $\wtu_{\beta} \in \mcc_{\beta, f}$ and H\"older's inequality that
\begin{equation}\label{eq-est-2}
1 \le \(\max_{x \in M} f(x)\) |M|^{2^*-\beta \over \beta+1} \(\int_M \wtu_{\beta}^{2^*+1} dv_{\hh}\)^{\beta+1 \over 2^*+1}.
\end{equation}
Thus, putting \eqref{eq-est-1}, \eqref{eq-est-2}, \eqref{eq-H^n-ene} and Lemma \ref{lemma-wtth-conv-2} together, we obtain
\[1 - (1+2\ep) \(\max_{x \in M} f(x)\)^{n-2\ga \over n} \wtth^{\ga}(2^*, f) \(\ola^{\ga}(\mh^N, [\hh_c])\)^{-1} \le C \int_M \wtu_{\beta}^2 dv_{\hh}\]
for $\beta$ close to $2^*$. Now the left-hand side is positive if $\ep > 0$ is chosen small enough. Hence the $L^2(M)$-norm of $\wtu_{2^*}$, or $\wtu_{2^*}$ itself, is nonzero.

\medskip
Thanks to the assumption $\Lambda^{\ga}(M,[\hh]) > 0$ and \eqref{eq-coer}, we obtain
\begin{equation}\label{eq-wtu_2*}
\wtth^{\ga}(2^*,f) \int_M f \wtu_{2^*}^{2^*+1} dv_{\hh} = J^{\ga}(\wtu_{2^*}) \ge 0
\end{equation}
by testing $\wtu_{2^*}$ in \eqref{eq-sub-ext-1} with $\beta = 2^*$.
Thus Lemma \ref{lemma-la} yields that $\wtth^{\ga}(2^*,f) > 0$.
Notice that we cannot have that $\wtth^{\ga}(2^*,f) = 0$, since we would get $\wtu_{2^*} = 0$ on $M$ if it is so.
By reasoning in the same way, we obtain $\mu_1 := \int_M f \wtu_{2^*}^{2^*+1} dv_{\hh} > 0$ as well.
Using the lower semi-continuity of $J^{\ga}$ and Lemma \ref{lemma-wtth-conv-2}, we see
\[\wtth^{\ga}(2^*,f) \mu_1 = J^{\ga}(\wtu_{2^*}) \le \liminf_{\beta \to 2^*-} J^{\ga}(\wtu_{\beta}) = \lim_{\beta \to 2^*-} \wtth^{\ga}(\beta,f) = \wtth^{\ga}(2^*,f),\]
so $\mu_1 \in (0,1]$. Now if we set $V = \mu_1^{-1/(2^*+1)} \wtu_{2^*} \in \mcc_{2^*,f}$, we deduce from \eqref{eq-wtu_2*} that
\[\wtth^{\ga}(2^*,f) \le J^{\ga}(V) = \mu_1^{-{n-2\ga \over n}} J^{\ga}(\wtu_{2^*}) = \mu_1^{2\ga \over n} \wtth^{\ga}(2^*,f).\]
Therefore $\mu_1 = 1$ and $\wtu_{2^*} \in \mcc_{2^*,f}$ is a minimizer of $\wtth^{\ga}(2^*,f)$.
Let $U_{2^*} = (\rho^*/\rho)^{n-2\ga \over 2} \wtu_{2^*} > 0$ on $\ox$.
Then it is an element in $\mcc_{2^*,f}$ which attains $\oth^{\ga}(2^*, f)$ and solves \eqref{eq-sub-ext-2}.
In view of the discussion at the end of Subsection \ref{subsec-cons}, the proof is completed.
\end{proof}

\section{Existence results}\label{sec-ps-ex}
\subsection{Proof of Theorems \ref{thm-ps} and \ref{thm-ps-end}}
In light of Proposition \ref{prop-cpt}, we just need to verify that the strict inequality in \eqref{eq-cpt} holds in each situation.
As in the previous section, we assume that for each fixed $y \in M$, the Fermi coordinate around $y$ is well-defined in its $(2r_2)$-geodesic neighborhood on $\ox$.

\medskip \noindent \textsc{Condition (A).}
Let $\chi \in C^{\infty}_c(\overline{\mr^N_+})$ be a cut-off function satisfying \eqref{eq-chi} and $\chi_{r_2} = \chi(\cdot/r_2)$.
In \cite[Proposition 2.5]{KMW}, it is proved that for sufficiently small $\ep > 0$,
\begin{multline*}
I^{\ga}(\chi_{r_2} W_{\ep, 0}) \le \kappa_{\ga} \int_{\mr^N_+} x_N^{1-2\ga} |\nabla W_{1,0}|^2 dx \\
+ \ep \kappa_{\ga} H(y) \left[{2n^2-2n+1-4\ga^2 \over 2(1-2\ga)}\right] \int_{\mr^N_+} x_N^{2-2\ga} |\nabla W_{1,0}|^2 dx + o(\ep)
\end{multline*}
where $I^{\ga}$ is the functional given in \eqref{eq-I^ga} and $W_{\ep,0}$ is the bubble defined in terms of Eq. \eqref{eq-bubble}. On the other hand, if $y \in M$ is the maximum point of $f$, then
\[{f(y) \over \int_M f (\chi_{r_2} w_{\ep, 0})^{2^*+1} dv_{\hh}} = \(\int_{\mr^n} w_{1, 0}^{2^*+1} d\bx\)^{-1} + O(\ep^2).\]
Combining these two estimates and using Lemma \ref{lemma-la}, we conclude that
\begin{equation}\label{eq-str-2}
\begin{aligned}
\(\max_{x \in M} f(x)\)^{n-2\ga \over n} \Theta^{\ga}(2^*, f)
&= \left[{f(y) \over \int_M f (\chi_{r_2} w_{\ep, 0})^{2^*+1} dv_{\hh}}\right]^{n-2\ga \over n} I^{\ga}(\chi_{r_2} W_{\ep, 0}) \\
&< \ola^{\ga}(\mh^N, [\hh_c]).
\end{aligned}
\end{equation}
From Proposition \ref{prop-cpt}, we obtain a weak solution to \eqref{eq-ps-ext}.

\medskip \noindent \textsc{Condition (B).}
We pick $\hh_0 \in [\hh]$ satisfying \eqref{eq-hh_0} and define
\begin{equation}\label{eq-psi_1}
\Psi_{1\ep}(x) := C_1 \text{II}_{ij}(y) x_i x_j x_N r^{-1} \pa_r W_{\ep,0}(x) \quad \text{for } x \in \mr^N_+
\end{equation}
where $1 \le i, j \le n$, $r := |\bx|$ and $C_1 \in \mr$.
Then the computation in the proof of \cite[Proposition 2.8]{KMW} shows that if we select $C_1 \in \mr$ appropriately, then we get
\begin{multline*}
I^{\ga}(\chi_{r_2} (W_{\ep,0} + \Psi_{1\ep})) \le \kappa_{\ga} \int_{\mr^N_+} x_N^{1-2\ga} |\nabla W_{1,0}|^2 dx \\
- \ep^2 \kappa_{\ga} \|\text{II}(y)\|^2 \mcm_1(n,\ga) \int_{\mr^N_+} x_N^{3-2\ga} |\nabla W_{1,0}|^2 dx + o(\ep^2)
\end{multline*}
with
\begin{multline*}
\mcm_1(n,\ga) := \({1+\ga \over 3}\) \left[{3n^2 + n(16\ga^2-22)+20(1-\ga^2) \over 8n(n-1)(1-\ga^2)} \right.\\
\left. + {16(n-1)(1-\ga^2) \over n(3n^2+n(2-8\ga^2)+4\ga^2-4)}\right] > 0 \quad \text{for } n \ge 4,\ \ga \in (0,1).
\end{multline*}
Since $y \in M$ is the maximum point of the function $f$, we have
\begin{multline}\label{eq-f}
\int_M f (\chi_{r_2} (w_{\ep, 0} + 0))^{2^*+1} dv_{\hh_0} = f(y) \int_{\mr^n} w_1^{2^*+1} d\bx \\
+ \ep^2 \({\Delta f(y) \over 2n}\) \int_{\mr^n} |\bx|^2 w_1^{2^*+1} d\bx + O(\ep^3)
\end{multline}
where the number 0 in the integrand of the left-hand side emphasizes that $\Psi_{1\ep}$ vanishes on $M$.
As remarked in the proof of \cite[Theorem 3.3]{Es}, we still have \eqref{eq-cond-1} with the same value of $c^1_{n,\ga}$ even though $\hh$ is replaced with $\hh_0$.
Therefore, choosing
\begin{equation}\label{eq-c_1}
\begin{aligned}
c^1_{n,\ga} &= \left[{2n(n-2) \over n-2\ga}\right] \left[{\int_{\mr^N_+} x_N^{3-2\ga} |\nabla W_{1,0}|^2 dx
\over \int_{\mr^N_+} x_N^{1-2\ga} |\nabla W_{1,0}|^2 dx}\right] \mcm_1(n, \ga) \\
&= \left[{16n(n-1)(1-\ga)\ga \over (n-2\ga)(n-2+2\ga)(n-2-2\ga)}\right] \mcm_1(n, \ga),
\end{aligned}
\end{equation}
where the second equality can be computed as in \cite{GQ, KMW}, we obtain
\begin{align*}
\(\max_{x \in M} f(x)\)^{n-2\ga \over n} \Theta^{\ga}(2^*, f)
&\le \ola^{\ga}(\mh^N, [\hh_c]) + \ep^2 c^{11}_{n,\ga} \({-\Delta_{\hh} f(y) \over f(y)} - c^1_{n,\ga} \|\text{II}\|^2(y)\) + o(\ep^2) \\
&< \ola^{\ga}(\mh^N, [\hh_c])
\end{align*}
for some $c^{11}_{n,\ga} > 0$. Proposition \ref{prop-cpt} implies the existence of a solution to \eqref{eq-ps-ext}.

\medskip \noindent \textsc{Condition (B$^{\prime}$).}
Putting \eqref{eq-Ma} and the estimate
\[\int_M f (\chi_{r_2} (w_{\ep, 0} + C_1' \text{II}_{ij}x_ix_j \pa_r w_{\ep,0}))^3 dv_{\hh_0} = f(y) \int_{\mr^n} w_1^3 d\bx + O(\ep^2)\]
together, we find
\[\(\max_{x \in M} f(x)\)^{2 \over 3} \Theta^{\ga}(2, f) \le \ola^{\ga}(\mh^4, [\hh_c]) - C \ep^2 |\log{\ep}| + O(\ep^2) < \ola^{\ga}(\mh^4, [\hh_c])\]
for some $C > 0$. Therefore there exists a solution to \eqref{eq-ps-ext}.

\medskip \noindent \textsc{Conditions (C) and (C$^{\prime}$).}
As before, we pick $\hh_0 \in [\hh]$ satisfying \eqref{eq-hh_0}. Then, under condition (c), we have
\begin{multline*}
I^{\ga}(\chi_{r_2} W_{\ep, 0}) \le \kappa_{\ga} \int_{\mr^N_+} x_N^{1-2\ga} |\nabla W_{1,0}|^2 dx \\
+ \ep^3 \kappa_{\ga} R_{NN;N}(y) \left[{4n^2-12n+9-4\ga^2 \over 24n(3-2\ga)}\right]
\int_{\mr^N_+} x_N^{4-2\ga} |\nabla W_1|^2 dx + o(\ep^3)
\end{multline*}
as computed in \cite[Proposition 3.4]{KMW}. Since $-\Delta_{\hh} f(y) = 0$ and $w_{\ep, 0}$ is radially symmetric, \eqref{eq-f} gives
\[\int_M f (\chi_{r_2} w_{\ep, 0})^{2^*+1} dv_{\hh_0} = f(y) \int_{\mr^n} w_1^{2^*+1} d\bx + O(\ep^4).\]
Hence \eqref{eq-str-2} holds provided that $R_{NN;N}(y) < 0$ and \eqref{eq-ps-ext} has a solution.
A similar calculation can be conducted when condition (c$^{\prime}$) holds.

\medskip \noindent \textsc{Condition (D).}
Select $\hh_0 \in [\hh]$ satisfying all the conditions imposed in Lemma \ref{lemma-g-mtr-2}
and write $\hh_0 = u \hh$ for some positive function $u$ on $M$.
Set also a function
\begin{equation}\label{eq-psi_2}
\Psi_{2\ep} = C_2 R_{iNjN}[\bg](y) x_i x_j x_N^2 r^{-1} \pa_r W_{\ep,0} \quad \text{in } \mr^N_+
\end{equation}
where $1 \le i, j \le n$, $r = |\bx|$ and $C_2 \in \mr$. From the proof of \cite[Proposition 3.7]{KMW}, we see
\begin{multline*}
I^{\ga}(\chi_{r_2} (W_{\ep,0} + \Psi_{2\ep})) \le \kappa_{\ga} \int_{\mr^N_+} x_N^{1-2\ga} |\nabla W_{1,0}|^2 dx \\
- \ep^4 \kappa_{\ga} \left[\|W_0\|^2 \mcm_{21}(n,\ga) + (R_{ij}[\bg_0])^2 \mcm_{22}(n,\ga)\right] \int_{\mr^N_+} x_N^{5-2\ga} |\nabla W_{1,0}|^2 dx + o(\ep^4)
\end{multline*}
for an optimally chosen number $C_2 \in \mr$. Here, the metric $\bg$ is replaced with $\bg_0$, $W_0 = W[\hh_0]$,
\[\mcm_{21}(n,\ga) := \frac{15n^4 - 120n^3 + 20n^2(17-2\ga^2) - 80n(5-2\ga^2) + 48(4-5\ga^2+\ga^4)} {7680n(n-1)(n-3)(1+\ga)(1-\ga)(2-\ga)}\]
and
\[\mcm_{22}(n, \ga) := \frac{25n^3 - 20n^2(9-\ga^2) + 100n(4-\ga^2) - 16(4-\ga^2)^2} {5n(n-4-2\ga)(n-4+2\ga)(5n^2-4n(1+\ga^2)-8(4-\ga^2))}.\]
The constants $\mcm_{21}(n,\ga)$ and $\mcm_{22}(n,\ga)$ are positive for any $n > 4 + 2\ga$ and $\ga \in (0,1)$.
Because $y \in M$ is the maximum point of the function $f$ at which $\Delta_{\hh} f = 0$, we have
\[\Delta_{\hh_0} f(y) = u^{-1} \Delta_{\hh} f(y) = 0 \quad \text{and } \quad \pa_{ij} f(y) = 0 \quad \text{for each } 1 \le i, j \le n,\]
where the latter assertion can be checked by mathematical induction on $n$.
Furthermore, one obtains using Lemma \ref{lemma-g-mtr-2} (1) that
\begin{multline}\label{eq-f-2}
\int_M f (\chi_{r_2} w_{\ep, 0})^{2^*+1} dv_{\hh_0} = f(y) \int_{\mr^n} w_1^{2^*+1} d\bx \\
+ \ep^4 \left[{(-\Delta)^2 f(y) \over 8n(n+2)}\right] \int_{\mr^n} |\bx|^4 w_1^{2^*+1} d\bx + O(\ep^5).
\end{multline}
Meanwhile, the function $u$ can be assumed to satisfy $u(y) = 1$ and $u_{,i}(y) = 0$; see Lee-Parker \cite[Section 5]{LP}. It follows that
\begin{equation}\label{eq-c2-1}
(-\Delta_{\hh_0})^2 f(y) = u(y)^{-2} \left[ (-\Delta_{\hh})^2 f + (n-2) \pa_{ij}u\, \pa_{ij}f \right](y) = (-\Delta_{\hh})^2 f(y).
\end{equation}
Moreover, it is a well-known fact that the $(1,3)$-Weyl tensor is invariant under the conformal transformation. Hence
\begin{equation}\label{eq-c2-2}
\begin{aligned}
\|W_0\|^2 &= \hh_0^{ij}\hh_0^{kl}\hh_0^{pq}\hh_0^{rs} (W_0)_{ikpr} (W_0)_{jlqs} = \hh_0^{kl}\hh_0^{rs} (W_0)^j_{\phantom{j}kpr} (W_0)^p_{\phantom{p}sjl} \\
&= u^{-2} \hh^{kl}\hh^{rs} W^j_{\phantom{j}kpr} W^p_{\phantom{p}sjl} = \|W\|^2
\end{aligned}
\end{equation}
at $y$, where the indices $i, j, k, l, p, q, r$ and $s$ range from 1 to $n$.
By \eqref{eq-c2-1} and \eqref{eq-c2-2}, assumption \eqref{eq-cond-2} should be still valid with the same value of $c_{n,\ga}^2 > 0$, even after we substitute $\hh_0$ for $\hh$.
As a consequence, our selection
\begin{equation}\label{eq-c_2}
\begin{aligned}
c^2_{n,\ga} &= \left[{8n(n-2)(n-4) \over n-2\ga}\right] \left[{\int_{\mr^N_+} x_N^{5-2\ga} |\nabla W_{1,0}|^2 dx
\over \int_{\mr^N_+} x_N^{1-2\ga} |\nabla W_{1,0}|^2 dx}\right] \mcm_{21}(n, \ga) \\
&= \left[{1024 (n-3)(n-1)n (2-\ga)(1-\ga)\ga \over 3(n-2\ga) (n-2-2\ga)(n-2+2\ga) (n-4-2\ga)(n-4+2\ga)}\right] \mcm_{21}(n, \ga),
\end{aligned}
\end{equation}
where the second equality comes from the arguments in \cite{GQ, KMW}, allows us to deduce
\begin{align*}
&\ \(\max_{x \in M} f(x)\)^{n-2\ga \over n} \Theta^{\ga}(2^*, f) \\
&\le \ola^{\ga}(\mh^N, [\hh_c]) - \ep^4 c^{21}_{n,\ga} \({(-\Delta_{\hh})^2 f(y) \over f(y)} + c^2_{n,\ga} \|W\|^2(y) + c^{22}_{n,\ga} (R_{ij}[\bg_0])^2\) + o(\ep^4) \\
&< \ola^{\ga}(\mh^N, [\hh_c])
\end{align*}
for some $c^{21}_{n,\ga},\, c^{22}_{n,\ga} > 0$.

\medskip \noindent \textsc{Condition (D$^{\prime}$).}
The desired inequality \eqref{eq-str-2} follows from \eqref{eq-Al} and \eqref{eq-f-2}.

\subsection{Proof of Theorem \ref{thm-ps-PE}}
We shall give a brief sketch of the proof of Theorem \ref{thm-ps-PE} under (E).
Let $\Phi_{\ep, r_2}$ be a Schoen-type test function constructed in (4.9) (with $\varrho_0 = r_2$) of Kim et al. \cite{KMW},
which is equal to the bubble $W_{\ep, 0}$ in $X \cap B^N(0,r_2)$ and a constant multiple of the Green's function $G(x,0)$ in $X \setminus B^N(0,2r_2)$. It is nonnegative in $X$ and satisfies
\[I^{\ga}(\Phi_{\ep, r_2}) \le \kappa_{\ga} \int_{\mr^N_+} x_N^{1-2\ga} |\nabla W_{1,0}|^2 dx - c_{n,\ga}^3 A \ep^{n-2\ga} + o(\ep^{n-2\ga})\]
for some $c_{n,\ga}^3 > 0$; see the proof of \cite[Proposition 4.5]{KMW}. The vanishing condition on $f$ implies
\[\int_M f \Phi_{\ep, r_2}^{2^*+1} dv_{\hh} \ge \int_{M \cap B^n(0,r_2)} f w_{\ep,0}^{2^*+1} d\bx
= f(y) \int_{\mr^n} w_{1,0}^{2^*+1} d\bx + o(\ep^{n-2\ga}).\]
Hence a strict inequality in \eqref{eq-cpt} holds.
A similar argument works for the case (E$^{\prime}$).

\section{Regularity of solutions to the prescribed fractional scalar curvature problems}\label{sec-reg}
\subsection{Local regularity results}
Suppose that $n > 2\ga$ and $\ga \in (0,1) \setminus \{\frac{1}{2}\}$.
Also, we assume the following conditions hold:
\begin{enumerate}
\item[(R1)] $A = (A_{ab})_{a,b=1}^N \in C^2(\overline{B_r}, \mr^{N \times N})$ satisfies that $A_{iN} = 0$ in $B_r$ for $i = 1, \cdots, n$
    and the uniform ellipticity condition $\Lambda_1 |\xi|^2 \le A(x) \xi \cdot \xi \le \Lambda_2 |\xi|^2$ holds
    for all points $\xi \in \mr^N$, $x \in B_r$ and some constants $0 < \Lambda_1 \le \Lambda_2$;
\item[(R2)] $F = (F_1, \cdots, F_n, F_N) \in L^{q_1}(B_r; x_N^{1-2\ga})$ and $B, G \in L^{q_2}(B_r; x_N^{1-2\ga})$
    for some $q_1 > n-2\ga+2$ and $q_2 > \frac{n-2\ga+2}{2}$;
\item[(R3)] $t \in L^{p_2}(\pa' B_r)$ for $p_2 > \frac{n}{2\ga}$.
\end{enumerate}
In this subsection, we present several regularity results for degenerate elliptic equations having the form
\begin{equation}\label{eq-reg}
\begin{cases}
-\text{div} (x_N^{1-2\ga} A \nabla U) + x_N^{1-2\ga} B U = x_N^{1-2\ga} G + \text{div} (x_N^{1-2\ga} F) &\text{in } B_r,\\
\pa^{\ga}_{\nu, F} U = sU + t &\text{on } \pa' B_r
\end{cases}
\end{equation}
where $B_r := B^N_+(0,r)$ and $\pa' B_r := B^n(0,r)$ for a fixed radius $r > 0$ and
\[\pa^{\ga}_{\nu, F} U := \kappa_{\ga} \lim_{x_N \to 0+} x_N^{1-2\ga} \(F_N - {\pa U \over \pa x_N} \) = \pa^{\ga}_{\nu} U + \kappa_{\ga} \lim_{x_N \to 0+} x_N^{1-2\ga} F_N.\]
We say that $U \in H^{1,2}(B_r; x_N^{1-2\ga})$ is a {\it weak solution} of \eqref{eq-reg} if it is a solution in the sense of a distribution.

\medskip
The first result concerns integrability and H\"older regularity of weak solutions to \eqref{eq-reg}.
\begin{lemma}\label{lemma-reg-1}
Suppose that functions $A$, $B$, $F$, $G$ and $t$ satisfy \textnormal{(R1)}, \textnormal{(R2)} and \textnormal{(R3)}, and $U \in H^{1,2}(B_r; x_N^{1-2\ga})$ is a weak solution of \eqref{eq-reg}.

\medskip \noindent (1) There is a small number $\delta_0 > 0$ relying only on $n$, $\ga$ and $r$ such that if $\|s\|_{L^{\frac{n}{2\ga}}(\pa' B_r)} < \delta_0$, then
\begin{multline*}
\|U\|_{H^{1,2}(B_{r/2}; x_N^{1-2\ga})} + \|U\|_{L^{2n(n-2\ga+2) \over (n-2\ga)^2}(\pa' B_{r/2})} \\
\le C \(\|U\|_{L^2(B_r; x_N^{1-2\ga})} + \|F\|_{L^{q_1}(B_r; x_N^{1-2\ga})} + \|G\|_{L^{q_2}(B_r; x_N^{1-2\ga})} + \|t\|_{L^{p_2}(\pa' B_r)}\)
\end{multline*}
where $C > 0$ depends only on $n$, $\ga$, $r$, $\Lambda_1$, $\Lambda_2$ and $\|B\|_{L^{q_2}(B_r; x_N^{1-2\ga})}$.

\medskip \noindent (2) Assume that $s \in L^{p_1}(\pa' B_r)$ for $p_1 > \frac{n}{2\ga}$.
Then $U \in C^{\alpha}(\overline{B_{r/2}})$ for some $\alpha \in (0,1)$ and
\[\|U\|_{C^{\alpha}(\overline{B_{r/2}})} \le C \(\|U\|_{L^2(B_r; x_N^{1-2\ga})} + \|F\|_{L^{q_1}(B_r; x_N^{1-2\ga})} + \|G\|_{L^{q_2}(B_r; x_N^{1-2\ga})} + \|t\|_{L^{p_2}(\pa' B_r)}\)\]
where $C > 0$ depends only on $n$, $\ga$, $r$, $\Lambda_1$, $\Lambda_2$, $\|B\|_{L^{q_2}(B_r; x_N^{1-2\ga})}$ and $\|s\|_{L^{p_1}(\pa' B_r)}$.
\end{lemma}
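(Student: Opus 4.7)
The plan is to prove both claims via a Moser iteration adapted to the $A_{2}$ weight $x_{N}^{1-2\ga}$, using the weighted Sobolev embedding $H^{1,2}(B_{r};x_{N}^{1-2\ga})\hookrightarrow L^{2\kappa}(B_{r};x_{N}^{1-2\ga})$ with $\kappa := \frac{n-2\ga+2}{n-2\ga}$ and the weighted Sobolev trace $H^{1,2}(B_{r};x_{N}^{1-2\ga})\hookrightarrow L^{2n/(n-2\ga)}(\pa' B_{r})$ as the twin tools that convert the boundary datum $sU$ into a term absorbable into the interior Dirichlet energy. Both embeddings follow from the Muckenhoupt theory of Fabes--Kenig--Serapioni and its boundary extensions (Cabr\'e--Sire, Jin--Li--Xiong).

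For part (1) I would test the weak form of \eqref{eq-reg} against $\varphi=\eta^{2}\bar U^{2\beta-1}$, where $\bar U:=U+\delta$ for some small $\delta>0$ sent to $0$ at the end, $\beta\ge 1/2$, and $\eta\in C^{\infty}_{c}(B^{N}(0,r))$ is a cutoff. Ellipticity of $A$ and Young's inequality on the $F$, $G$, $B$, $t$ contributions yield a Caccioppoli estimate
\begin{equation*}
\int_{B_{r}}x_{N}^{1-2\ga}\eta^{2}|\nabla\bar U^{\beta}|^{2}\,dx \;\le\; C\int_{B_{r}}x_{N}^{1-2\ga}(|\nabla\eta|^{2}+\eta^{2})\bar U^{2\beta}\,dx + (\textrm{data}) + \int_{\pa' B_{r}}\eta^{2} s\,\bar U^{2\beta}\,dx',
\end{equation*}
and H\"older followed by the trace inequality bound the boundary integral by $C_{0}\|s\|_{L^{n/(2\ga)}(\pa' B_{r})}\int_{B_{r}}x_{N}^{1-2\ga}|\nabla(\eta\bar U^{\beta})|^{2}\,dx$. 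Choosing $\delta_{0}$ so that $CC_{0}\delta_{0}\le \tfrac12$ absorbs the gradient piece onto the left-hand side. The $H^{1,2}$-bound of (1) is then the case $\beta=1/2$ (formally $\beta\to 1/2^{+}$ with $\delta\to 0$), and one further Moser step with $\beta=\kappa$ followed by the trace embedding produces the exact boundary exponent $2n\kappa/(n-2\ga)=2n(n-2\ga+2)/(n-2\ga)^{2}$.

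For part (2), $s\in L^{p_{1}}$ with $p_{1}>n/(2\ga)$ gives $\frac{2p_{1}}{p_{1}-1}<\frac{2n}{n-2\ga}$, so the boundary $s$-term can be absorbed without any smallness assumption by interpolating $L^{2p_{1}/(p_{1}-1)}(\pa' B_{r})$ between $L^{2}$ and $L^{2n/(n-2\ga)}$ and applying Young's inequality, with a constant depending on $\|s\|_{L^{p_{1}}}$. Iterating the scheme with $\beta\to\infty$ yields $U\in L^{\infty}(\overline{B_{r/2}})$ with the linear dependence on the data stated. Once $U$ is bounded, $sU+t\in L^{p_{2}}(\pa' B_{r})$ and $BU-G\in L^{q_{2}}(B_{r};x_{N}^{1-2\ga})$, so the weighted De~Giorgi--Nash--Moser Harnack and oscillation-decay lemmas of Fabes--Kenig--Serapioni in the interior, together with their boundary analogue for Robin-type problems (Cabr\'e--Sire, Jin--Li--Xiong), deliver the H\"older estimate on $\overline{B_{r/2}}$ with the quantitative bound.

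The main obstacle is the boundary absorption in (1): one must track the constant $C_{0}$ in the weighted trace inequality precisely enough to guarantee that $\delta_{0}$ depends only on $n$, $\ga$ and $r$, independently of $B$, $F$, $G$, $t$, and verify that the second Moser step with $\beta=\kappa$ does not reintroduce an $s$-dependent factor in front of the Dirichlet form. A secondary technical point, for (2), is that the boundary oscillation decay must be run for the degenerate operator with variable leading coefficient $A_{NN}$ rather than the constant coefficient of the Caffarelli--Silvestre model; this is handled by a freeze-of-coefficients argument, provided the Caccioppoli and Sobolev steps use cutoffs adapted to $A_{NN}$.
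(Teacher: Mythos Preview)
Your proposal is correct and follows essentially the same route as the paper: the paper's proof is terse, simply invoking \cite[Proposition~2.3, Lemma~2.8, Proposition~2.6]{JLX} and \cite[Lemma~3.1, Proposition~3]{FF}, which are precisely the weighted Moser iteration and trace-absorption arguments you outline. Your additional remarks on tracking the dependence of $\delta_0$ and on the freeze-of-coefficients for variable $A_{NN}$ are exactly the points one must check when passing from the constant-coefficient model in those references to the present setting.
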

\begin{proof}
One can justify (1) by following the argument in \cite[Proposition 2.3, Lemma 2.8]{JLX} but using the optimal Sobolev inequality \cite[Lemma 3.1]{FF}.

The assertion (2) follows from the standard argument involving the Moser iteration technique.
Refer to \cite[Proposition 2.6]{JLX} and \cite[Proposition 3]{FF} which cover the case that $A$ is the identity and $F = 0$ in $B_r$.
\end{proof}
\begin{rmk}\label{rmk-reg-1}
On the course of the proof of Lemma \ref{lemma-reg-1} (2), one gets the {\it weak Harnack inequality}:
Suppose that the hypotheses of Lemma \ref{lemma-reg-1} (2) hold and $U \ge 0$ in $B_r$. Then there is a number $q_0 \ge 1$ such that
\[\inf_{\overline{B_{r/4}}} U + \|F\|_{L^{q_1}(B_r; x_N^{1-2\ga})} + \|G\|_{L^{q_2}(B_r; x_N^{1-2\ga})} + \|t\|_{L^{p_2}(\pa' B_r)}
\ge C\|U\|_{L^{q_0}(B_{r/2}; x_N^{1-2\ga})} \]
where $C > 0$ depends only on $n$, $\ga$, $r$, $\Lambda_1$, $\Lambda_2$, $\|B\|_{L^{q_2}(B_r; x_N^{1-2\ga})}$ and $\|s\|_{L^{p_1}(\pa' B_r)}$.
For its derivation, see \cite[Proposition 2.6]{JLX} and \cite[Proposition 2]{FF} which treat the situation that $A$ is the identity and $F = 0$ in $B_r$.

From this result, one obtains the {\it strong maximum principle}: If $F, G = 0$ in $B_r$, $t = 0$ on $\pa' B_r$ and $U \ge 0$ attains 0 at some point in $B_r$, then $U = 0$ in $B_r$.
\end{rmk}

Secondly, we establish regularity of the derivatives of weak solutions to \eqref{eq-reg} in the $\bx$-variables.
Let us introduce a terminology:
For any function $V$ in $\mr^N_+$ and $h \in  \mr^n$ with $|h|$ small, define
\[D_h V(\bx, x_N) = {V(\bx+h, x_N) - V(\bx, x_N) \over |h|} \quad \text{for } (\bx, x_N) \in \mr^N_+.\]
We call $D_h V$ a {\it difference quotient} of $V$.
\begin{lemma}\label{lemma-reg-2}
Suppose that $A$ satisfies \textnormal{(R1)} and $U \in H^{1,2}(B_r; x_N^{1-2\ga})$ is a weak solution of \eqref{eq-reg}.
Additionally, we assume that
\begin{enumerate}
\item[(R11)] there exist a function $A_{NN}' \in C^2(\overline{B_r})$ and a number $\Lambda_1 \le c \le \Lambda_2$ such that $A_{NN}(x) = c + A_{NN}'(x) x_N$ on $\overline{B_r}$. In particular,
    $|D_h A_{NN}| + |D_h (\pa_1 A_{NN})| + \cdots + |D_h (\pa_n A_{NN})| \le C x_N$ on $\overline{B_r}$;
\item[(R21)] $F_a, \nabla_{\bx} F_a := (\pa_i F_a)_{i=1}^n \in C^{\alpha'}(\overline{B_r})$ for $\alpha' \in (0,1)$ and $B, \nabla_{\bx} B, G, \nabla_{\bx} G \in L^{\infty}(B_r)$;
\item[(R22)] $\nabla_{\bx}^2 F_a = (\pa_{ij} F_a)_{i,j = 1}^n \in L^{q_1}(B_r; x_N^{1-2\ga})$
    and $\nabla_{\bx}^2 B, \nabla_{\bx}^2 G \in L^{q_2}(B_r; x_N^{1-2\ga})$ for $q_1 > n-2\ga+2$ and $q_2 > \frac{n-2\ga+2}{2}$;
\item[(R31)] $s, \nabla_{\bx} s, \nabla_{\bx}^2 s \in L^{p_1}(\pa' B_r)$
    and $t, \nabla_{\bx} t, \nabla_{\bx}^2 t \in L^{p_2}(\pa' B_r)$ for $p_1,\, p_2 > \frac{p}{2\ga}$.
\end{enumerate}
Then there exists $0 < r' < r$ such that $\nabla_{\bx} U, \nabla_{\bx}^2 U \in C^{\alpha} (\overline{B_{r'}})$ for some $\alpha \in (0,1)$.
\end{lemma}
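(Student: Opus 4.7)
The plan is the standard Nirenberg difference-quotient method, applied in the $n$ tangential directions and combined with successive applications of Lemma \ref{lemma-reg-1}. For a small tangential increment $h \in \mr^n \times \{0\}$, I would introduce $V := D_h U$ on a half-ball $B_{r_0} \subset B_r$. Testing the weak form of \eqref{eq-reg} against $\phi(\cdot - h)$, translating by $h$, and subtracting the original shows that $V$ weakly solves an equation of exactly the same shape \eqref{eq-reg}, with shifted matrix $A^{(h)}(x) := A(x+h)$ (which still satisfies (R1)), shifted $B^{(h)} := B(\cdot+h)$ and $s^{(h)} := s(\cdot+h)$, and new inhomogeneous data of the form $F^{(h)} = D_h F + (D_h A)\,\nabla U$, $G^{(h)} = D_h G - (D_h B)\,U(\cdot+h)$, and $t^{(h)} = D_h t - (D_h s)\,U(\cdot+h)|_{\pa' B_{r_0}}$.

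The next step is to verify that these modified data fit hypotheses (R2) and (R3) with constants independent of $|h|$. For $D_h F$, $D_h G$, $D_h B$, $D_h s$, $D_h t$ the regularity assumptions (R21), (R22), (R31) supply exactly what is needed: one full tangential derivative of each datum is controlled in the right $L^{q_i}$ or $L^{p_i}$ space. The commutators $(D_h B)\,U(\cdot+h)$ and $(D_h s)\,U(\cdot+h)$ are handled by the preliminary H\"older bound on $U$ from Lemma \ref{lemma-reg-1}(2) combined with the $L^{\infty}$ bounds on $\nabla_{\bx} B$ and $\nabla_{\bx} s$ in (R21), (R31). The truly delicate piece is $(D_h A)\,\nabla U$: its tangential components are tame since $D_h A_{ij}$ is bounded and $\nabla_{\bx} U \in L^2(x_N^{1-2\ga})$ locally by Lemma \ref{lemma-reg-1}(1), but its $N$-th component reads $(D_h A_{NN})\,\pa_N U$, and $\pa_N U$ is a priori only known to belong to $L^2(x_N^{1-2\ga})$.

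This is precisely where (R11) is decisive. For tangential $h$, the identity $A_{NN}(x) = c + A'_{NN}(x)\,x_N$ gives $D_h A_{NN}(x) = (D_h A'_{NN}(x))\,x_N = O(x_N)$, so the problematic component becomes $O(x_N)\,\pa_N U$ and the extra factor of $x_N$ shifts its effective weight from $x_N^{-2\ga}$ to $x_N^{2-2\ga}$. Via a short H\"older/interpolation argument exploiting both the $H^{1,2}(B_r; x_N^{1-2\ga})$-bound of Lemma \ref{lemma-reg-1}(1) and the uniform $L^{\infty}$-bound on $U$ of Lemma \ref{lemma-reg-1}(2), one verifies that $F^{(h)} \in L^{q_1}(B_{r_0}; x_N^{1-2\ga})$ uniformly in $h$. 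Lemma \ref{lemma-reg-1}(2) then delivers a $C^{\alpha}(\overline{B_{r_1}})$-bound on $D_h U$ that is independent of $|h|$, and letting $h \to 0$ along each of the $n$ coordinate tangential directions yields $\nabla_{\bx} U \in C^{\alpha}(\overline{B_{r_1}})$.

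To upgrade to the second tangential derivative $\nabla_{\bx}^2 U \in C^{\alpha}$, I would repeat the same scheme on the equation satisfied by each $\pa_i U$, $i \le n$, which after tangential differentiation still has the structure \eqref{eq-reg} but with data involving $\nabla_{\bx} F$, $\nabla_{\bx} G$, $\nabla_{\bx} B$, $\nabla_{\bx} s$, $\nabla_{\bx} t$; (R21), (R22), (R31) are formulated precisely to accommodate this second pass. The principal obstacle throughout is this normal-derivative term: absent the compatibility condition (R11), a tangential difference quotient would produce an integrand proportional to $\pa_N U$ with no extra $x_N$ factor, a contribution not covered by the data hypotheses of Lemma \ref{lemma-reg-1}. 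The assumption (R11) is tailor-made to cancel this obstruction, and its recovery in the curved global problem via a volume-preserving exponential chart, as indicated in the paper's introduction, is exactly what opens the door to the whole regularity scheme.
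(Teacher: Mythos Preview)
Your overall strategy---apply Nirenberg's difference-quotient method and then invoke Lemma~\ref{lemma-reg-1}(2) for the equation satisfied by $D_hU$---is the same as the paper's, and you have correctly isolated the dangerous term $(D_hA_{NN})\,\pa_NU$ together with the role of (R11) in reducing it to $O(x_N)\,\pa_NU$. However, there is a real gap at the step where you claim that ``a short H\"older/interpolation argument'' using only the $H^{1,2}(B_r;x_N^{1-2\ga})$-bound and the $L^\infty$-bound on $U$ yields $F^{(h)}\in L^{q_1}(B_{r_0};x_N^{1-2\ga})$ uniformly in $h$. This is not true as stated: hypothesis (R2) requires $q_1>n-2\ga+2$, while the a~priori information gives only $\nabla U\in L^2(x_N^{1-2\ga})$. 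No interpolation between an $L^2$ gradient bound and an $L^\infty$ bound on $U$ produces higher $L^q$ integrability of $\nabla U$; that would amount to reverse-H\"older. In particular, neither the tangential piece $(D_hA_{ij})\,\pa_jU_h$ nor the normal piece $(D_hA_{NN})\,\pa_NU_h\sim x_N\,\pa_NU_h$ is known to lie in $L^{q_1}(x_N^{1-2\ga})$ at this stage.

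The paper closes this gap with two separate, nontrivial arguments you have omitted. First, a rescaling to a ball of radius comparable to $x_{N0}$ around an interior point $x_0$ turns the equation into a uniformly elliptic one, and classical Schauder estimates (\cite[Theorem~8.32]{GT}) then give $|x_N\pa_NU|\le C$ pointwise; this is how $x_N\pa_NU\in L^\infty$ is obtained. Second, a Moser-type iteration---testing the difference-quotient equation against truncated powers $\Xi_{h,K,m}=\chi^2(V_{h,K}^mV_h-k^{m+1})$---bootstraps $\nabla_{\bx}U$ from $L^2(x_N^{1-2\ga})$ to $L^q(x_N^{1-2\ga})$ for every $q>1$. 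Only after these two steps is the right-hand side of the $D_hU$-equation admissible for Lemma~\ref{lemma-reg-1}(2). The second pass for $\nabla_{\bx}^2U$ likewise needs the corresponding bound $x_N\pa_{Ni}U\in L^\infty$, again via rescaling. You should insert these arguments in place of the interpolation claim.
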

\begin{proof}
We shall show that $\nabla_{\bx} U$ is H\"older continuous on $\overline{B_{r/12}}$. For each $h \in \mr^n$ with $|h|$ small, the corresponding difference quotient $D_h U$ of $U$ solves
\begin{equation}\label{eq-reg-21}
\begin{cases}
-\text{div} (x_N^{1-2\ga} A \nabla (D_h U)) + x_N^{1-2\ga} B (D_h U) = x_N^{1-2\ga} \wtg + \text{div} (x_N^{1-2\ga} \wtf) &\text{in } B_{r/2},\\
\pa^{\ga}_{\nu, \wtf} (D_h U) = s (D_h U) + [(D_h s) U_h + D_h t] &\text{on } \pa' B_{r/2}
\end{cases}
\end{equation}
where $U_h(\bx, x_N) := U(\bx + h, x_N)$,
\begin{align}
\wtf &:= (D_h F_i + (D_h A_{ij}) (\pa_j U_h), D_h F_N + (D_h A_{NN}) (\pa_N U_h)), \label{eq-wtf}\\
\wtg &:= D_hG - (D_hB) U_h. \nonumber
\end{align}
By (R11) and Lemma \ref{lemma-reg-1} (2), we have
\begin{align*}
\|D_h U\|_{C^{\alpha}(\overline{B_{r/12}})} &\le C \(\|\nabla U\|_{L^2(B_r; x_N^{1-2\ga})} + \|\nabla_{\bx} U\|_{L^{q_1}(B_{r/6}; x_N^{1-2\ga})} + \|x_N \pa_N U\|_{L^{q_1}(B_{r/6}; x_N^{1-2\ga})} \right. \\
&\qquad + \|\nabla_{\bx} F\|_{L^{q_1}(B_r; x_N^{1-2\ga})} + \|\nabla_{\bx} G\|_{L^{q_2}(B_r; x_N^{1-2\ga})} + \|\nabla_{\bx} B\|_{L^{q_2}(B_r; x_N^{1-2\ga})} \\
&\qquad \left. + \|\nabla_{\bx} s\|_{L^{p_2}(\pa' B_r)} + \|\nabla_{\bx} t\|_{L^{p_2}(\pa' B_r)}\)
\end{align*}
where $C > 0$ depends only on $n$, $\ga$, $r$, $A$, $B$, $\|s\|_{L^{p_1}(\pa' B_r)}$ and $\|U\|_{L^{\infty}(B_r)}$.
In view of (R21) and (R31), it is sufficient to check that $x_N \pa_N U \in L^{\infty}(B_{r/4})$ and $\nabla_{\bx} U \in L^q(B_{r/6}; x_N^{1-2\ga})$ for any $q > 1$.

We apply the rescaling argument to prove the first claim. For a fixed point $x_0 = (\bx_0, x_{N0}) \in B_{r/4}$ and any element $x \in B^N((0,1), \frac{1}{2})$,
we set $x' = (\bx', x_N') = (\bx_0 + x_{N0} \bx, x_{N0} x_N) \in B^N(x_0, \frac{x_{0N}}{4})$, $\widehat{U}(x) = U(x')$, $\widehat{A}(x) = A(x')$, etc.
Then $\widehat{U}$ is a solution to the equation
\[-\text{div}\(x_N^{1-2\ga} \widehat{A}\, \nabla \widehat{U}\) + x_{0N}^2 x_N^{1-2\ga} \widehat{B} \widehat{U} = x_{0N}^2 x_N^{1-2\ga} \widehat{G} + x_{0N} \text{div}\(x_N^{1-2\ga} \widehat{F}\)\]
in $B^N((0,1), \frac{1}{2})$. It is uniformly elliptic, so an application of \cite[Theorem 8.32]{GT} and (R21) give
\begin{equation}\label{eq-reg-23}
\begin{aligned}
|(x_N \pa_NU)(x_0)| &\le \left\| \pa_N \widehat{U} \right\|_{C^{\alpha'} (\overline{B^N((0,1), \frac{1}{4})})} \\
&\le C \( \|U\|_{L^{\infty}(B_{r/2})} + \|F\|_{C^{\alpha'}(\overline{B_r})} + \|G\|_{L^{\infty}(B_r)} \)
\end{aligned}
\end{equation}
where $C > 0$ depends only on $n$, $r$, $A$ and $\|B\|_{L^{\infty}(B_r)}$.

To examine the second claim, let us select
\[k = \begin{cases}
\begin{aligned}
\|\nabla_{\bx} F\|_{L^{q_1}(B_r; x_N^{1-2\ga})} + \|\nabla_{\bx} G\|_{L^{q_2}(B_r; x_N^{1-2\ga})} + \|\nabla_{\bx} B\|_{L^{q_2}(B_r; x_N^{1-2\ga})} \\
+ \|\nabla_{\bx} s\|_{L^{p_2}(\pa' B_r)} + \|\nabla_{\bx} t\|_{L^{p_2}(\pa' B_r)}
\end{aligned} &\text{if it is nonzero},\\
\text{any positive number} &\text{otherwise}.
\end{cases}\]
In the latter case, we send $k \to 0$ at the last step. For a fixed $K > 0$ and $m \ge 0$, we define
\[V_h = (D_h U)_+ + k = \max\{D_h U, 0\} + k,\quad V_{h,K} = \min\{V_h, K\},\quad Z_{h,K,m} = V_{h,K}^{m \over 2} V_h.\]
Let $\chi \in C^{\infty}(\overline{\mr^N_+})$ be a cut-off function satisfying \eqref{eq-chi} and $|\nabla \chi| \le C \chi$ in $\mr^N_+$.
Setting $\chi_{r/6} = \chi(6\cdot/r)$, we test \eqref{eq-reg-21} with $\Xi_{h,K,m} := \chi_{r/6}^2 (V_{h,K}^m V_h - k^{m+1})$.
Then a bit of calculation exploiting H\"older's inequality, Young's inequality, the Sobolev inequality, the Sobolev trace inequality and (R11) shows
\begin{equation}\label{eq-reg-22}
\begin{aligned}
&\ \int_{B_r} x_N^{1-2\ga} |\nabla (\chi_{r/6} Z_{h,K,m})|^2 dx \\
&\le C \left[ \int_{B_r} x_N^{1-2\ga} \chi_{r/6}^2 \(V_{h,K}^{m-1} V_h |\nabla V_{h,K}|^2 + V_{h,K}^m |\nabla V_h|^2 \) dx + \int_{B_r} x_N^{1-2\ga} (\chi_{r/6} Z_{h,K,m})^2 dx \right] \\
&\le C \left[\int_{B_r} x_N^{1-2\ga} \(|\nabla_{\bx} U_h| + |x_N \pa_N U_h|\) |\nabla \Xi_{h,K,m}|\, dx + \int_{B_r} x_N^{1-2\ga} (\chi_{r/6} Z_{h,K,m})^2 dx \right]
\end{aligned}
\end{equation}
for $C > 0$ depending only on $n$, $\ga$, $r$, $A$, $B$, $\|s\|_{L^{p_1}(\pa' B_r)}$, $\|U\|_{L^{\infty}(B_r)}$ and $m$.
The boundary integrals appearing here can be controlled as in the proof of \cite[Proposition 2.6]{JLX}.
Because of the first claim, the first integral in the rightmost side of \eqref{eq-reg-22} is bounded by
\begin{multline*}
\int_{B_r} x_N^{1-2\ga} \(|\nabla_{\bx} U_h| + \|x_N \pa_N U\|_{L^{\infty}(B_{r/2})}\) |\nabla \Xi_{h,K,m}|\, dx \le \vep \int_{B_r} x_N^{1-2\ga} \chi_{r/6}^2 V_{h,K}^m |\nabla V_h|^2 dx \\
+ C \left[ \int_{B_r} x_N^{1-2\ga} \chi_{r/6}^2 V_{h,K}^m (|\nabla_{\bx} U_h|^2 + \|x_N \pa_N U\|_{L^{\infty}(B_{r/2})}^2) dx + \int_{B_r} x_N^{1-2\ga} (\chi_{r/6} Z_{h,K,m})^2 dx \right]
\end{multline*}
for a small $\vep > 0$. As a consequence, by taking $K \to \infty$ and applying the Sobolev inequality, we reach at
\begin{multline*}
\|V_h\|_{L^{(m+2) \cdot ({n-2\ga+2 \over n-2\ga})}(B_{r/6}; x_N^{1-2\ga})}^{m+2} \\
\le C \left[ \|V_h\|_{L^{m+2}(B_{r/3}; x_N^{1-2\ga})}^{m+2} + \int_{B_{r/3}} x_N^{1-2\ga} V_h^m (|\nabla_{\bx} U_h|^2 + \|x_N \pa_N U\|_{L^{\infty}(B_{r/2})}^2) dx \right]
\end{multline*}
whenever the right-hand side is finite.

Since $\nabla U \in L^2(B_r; x_N^{1-2\ga})$, the $L^2(B_{r/3}; x_N^{1-2\ga})$-norm of $V_h$ is uniformly bounded in $h \in \mr^n$ with small $|h| > 0$.
By taking $m = 0$ in the above estimate, we deduce that the $L^{2(n-2\ga+2) \over n-2\ga}(B_{r/6}; x_N^{1-2\ga})$-norm of $V_h$ is uniformly bounded in $h$,
which implies that $(\nabla_{\bx} U)_+ \in L^{2(n-2\ga+2) \over n-2\ga}(B_{r/6}; x_N^{1-2\ga})$.
The same argument applied to $(D_h U)_- + k$ where $(D_h U)_- := \max\{-D_h U, 0\}$ gives us
that $(\nabla_{\bx} U)_- \in L^{2(n-2\ga+2) \over n-2\ga}(B_{r/6}; x_N^{1-2\ga})$ as well.
Repeating this procedure many times and adjusting $r > 0$, we discover
\[\|\nabla_{\bx} U \|_{L^q(B_{r/6}; x_N^{1-2\ga})} \le C \( \|\nabla_{\bx} U \|_{L^2(B_r; x_N^{1-2\ga})} + \|x_N \pa_N U\|_{L^{\infty}(B_{r/2})} + k \)\]
for any $q > 1$ where $C > 0$ depends only on $n$, $\ga$, $r$, $A$, $B$, $\|s\|_{L^{p_1}(\pa' B_r)}$, $\|U\|_{L^{\infty}(B_r)}$ and $q$.
This justifies the second claim.
The fact that $\nabla_{\bx} U \in H^{1,2}(B_{r/6}; x_N^{1-2\ga})$ follows from \eqref{eq-reg-22} as a by-product.

\medskip
In a similar way, one can prove that $\nabla_{\bx}^2 U$ is H\"older continuous on $\overline{B_{r/144}}$.
Because of (R22), the only nontrivial part is to check that $\|x_N \pa_{Ni} U\|_{L^{\infty}(B_{r/24})} < \infty$ for $i = 1, \cdots, n$.
By elliptic regularity, there holds that
\begin{align*}
|(x_N \pa_{Ni} U)(x_0)| &\le \left\| \pa_N \widehat{\pa_i U} \right\|_{C^{\min\{\alpha, \alpha'\}}(B^N((0,1), \frac{1}{4}))} \\
&\le C \(\|\pa_i U\|_{C^{\alpha}(\overline{B_{r/12}})} + \left\| \widehat{x_N \pa_N U} \right\|_{C^{\alpha'}(B^N((0,1), \frac{1}{2}))} \right. \\
&\qquad \left. + \|\nabla_{\bx} F\|_{C^{\alpha'}(\overline{B_r})} + \|\nabla_{\bx} G\|_{L^{\infty}(B_r)} + \|\nabla_{\bx} B\|_{L^{\infty}(B_r)}\)
\end{align*}
for each $x_0 \in B_{r/24}$, where $\widehat{\pa_i U}$ and $\widehat{x_N \pa_N U}$ are defined as before
and the constant $C > 0$ depends only on $n, r, A$ and $\|U\|_{L^{\infty}(B_{r/2})}$.
Furthermore, given $x, y \in B^N((0,1), \frac{1}{2}))$, if we denote $x' = (\bx_0 + x_{N0}\bx, x_{N0}x_N)$ and $y' = (\bx_0 + x_{N0}\bar{y}, x_{N0}y_N)$, then $x', y' \in B^N(x_0, \frac{x_{0N}}{2})$ and
\begin{align*}
\left| \widehat{x_N \pa_N U}(x) - \widehat{x_N \pa_N U}(y) \right| &= \left|(x_{0N}x_N) (\pa_N U)(x') - (x_{0N}y_N) (\pa_N U)(y') \right| \\
&\le x_{0N} \left[ |x_N - y_N| |(\pa_N U)(x')| + |y_N| |(\pa_N U)(x') - (\pa_N U)(y')| \right] \\
&\le 2 \|x_N \pa_N U\|_{L^{\infty}(B_{r/4})} |x_N - y_N| + {3 \over 2} \left[ \pa_N \widehat{U} \right]_{C^{\alpha'}(\overline{B_{r/2}})} |x - y|^{\alpha'} \\
&\le C \( \|x_N \pa_N U\|_{L^{\infty}(B_{r/4})} + \left\| \pa_N \widehat{U} \right\|_{C^{\alpha'}(\overline{B_{r/2}})} \) |x - y|^{\alpha'}.
\end{align*}
Thus we see from \eqref{eq-reg-23} that
\[\left\| \widehat{x_N \pa_N U} \right\|_{C^{\alpha'}(B^N((0,1), \frac{1}{2}))} \le C \(\|U\|_{L^{\infty}(B_{r/2})} + \|F\|_{C^{\alpha'}(\overline{B_r})} + \|G\|_{L^{\infty}(B_r)}\).\]
This proves the assertion and concludes the proof.
\end{proof}
\noindent Modifying the above argument slightly, one gets the following result.
\begin{cor}\label{cor-reg-2}
Assume that $A$ satisfies \textnormal{(R1)} and
\begin{enumerate}
\item[(R12)] $A \in C^{\infty}(B_r, \mr^{N \times N})$ and $A_{NN}(x) = c + A_{NN}'(x) x_N$ in $B_r$
    for a function $A_{NN}' \in C^{\infty}(B_r)$ and a number $\Lambda_1 \le c \le \Lambda_2$;
\item[(R23)] $\nabla_{\bx}^m F_a \in C^{\alpha'}(B_r)$ and $\nabla_{\bx}^m B, \nabla_{\bx}^m G \in L^{\infty}(B_r)$ for some $\alpha' \in (0,1)$ and all $m \in \mn$.
\end{enumerate}
Moreover, suppose that $U \in H^{1,2}(B_r; x_N^{1-2\ga}) \cap L^{\infty}(B_r)$ is a positive function in $B_r$ and weakly solves
\begin{equation}\label{eq-reg-2}
\begin{cases}
-\textnormal{div} (x_N^{1-2\ga} A \nabla U) + x_N^{1-2\ga} B U = x_N^{1-2\ga} G + \textnormal{div} (x_N^{1-2\ga} F) &\text{in } B_r,\\
\pa^{\ga}_{\nu,F} U = fU^{\beta} &\text{on } \pa' B_r
\end{cases}
\end{equation}
for some $f \in C^{\infty}(\overline{B_r})$ and $\beta \in (1, 2^*]$.
Then, for any $m \in \mn$, there exist $\alpha \in (0,1)$ and $0 < r' < r$ such that $\nabla_{\bx}^m U \in C^{\alpha}(\overline{B_{r'}})$.
\end{cor}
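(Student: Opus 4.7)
The plan is to proceed by induction on $m$, combining the difference-quotient technique used in Lemmas \ref{lemma-reg-1}(2) and \ref{lemma-reg-2} with the structural hypotheses (R12) and (R23), which guarantee that all data functions remain well-behaved under repeated tangential differentiation. The base case $m = 0$ follows from Lemma \ref{lemma-reg-1}(2) applied after writing the boundary condition as $\pa^{\ga}_{\nu, F} U = sU + t$ with $s := f U^{\beta-1} \in L^{\infty}(\pa' B_r)$ and $t := 0$ (these are well-defined since $U > 0$ and $U \in L^{\infty}$); this gives $U \in C^{\alpha}(\overline{B_{r_0}})$ for some $r_0 \in (0, r)$ and $\alpha \in (0,1)$.

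For the inductive step, suppose $\nabla_{\bx}^k U \in C^{\alpha}(\overline{B_{r_k}})$ for every $k \le m$. Given small tangential vectors $h_1, \ldots, h_{m+1} \in \mr^n$, form the iterated difference quotient $V := D_{h_1} \cdots D_{h_{m+1}} U$. Successive application of the computation leading to \eqref{eq-reg-21}, combined with the discrete Leibniz rule for difference quotients of $fU^\beta$, shows that $V$ weakly solves an equation of the form \eqref{eq-reg}:
\[
\begin{cases}
-\textnormal{div}(x_N^{1-2\ga} A \nabla V) + x_N^{1-2\ga} B V = x_N^{1-2\ga} \wtg_m + \textnormal{div}(x_N^{1-2\ga} \wtf_m) & \text{in } B_{r'},\\
\pa^{\ga}_{\nu, \wtf_m} V = \widetilde{s}_m V + \widetilde{t}_m & \text{on } \pa' B_{r'},
\end{cases}
\]
for some $r' \in (0, r_m)$, where $\wtf_m, \wtg_m, \widetilde{s}_m, \widetilde{t}_m$ are polynomial expressions in the tangential difference quotients of $A, B, F, G, f$ of order at most $m+1$, in $U$ and its lower-order tangential differences (all H\"older by the induction hypothesis), and in $x_N \pa_N U$ (which enters $\wtf_m$ only multiplied by factors of the form $D_{h_i} A_{NN}$ that vanish to order $x_N$ on $\pa' B_{r'}$, by (R12)). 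Invoking (R23) together with the rescaling estimate \eqref{eq-reg-23} applied iteratively to lower-order tangential derivatives of $U$, one verifies that $\wtf_m \in L^{q_1}(B_{r'}; x_N^{1-2\ga})$, $\wtg_m \in L^{q_2}(B_{r'}; x_N^{1-2\ga})$, and $\widetilde{s}_m, \widetilde{t}_m \in L^{\infty}(\pa' B_{r'})$, all uniformly in $h_1, \ldots, h_{m+1}$. Applying Lemma \ref{lemma-reg-1}(2) to $V$ then yields a uniform $C^{\alpha}$ bound on $\overline{B_{r''}}$ for some $r'' \in (0, r')$, and the standard identification of difference quotients with weak derivatives (letting each $h_i \to 0$) gives $\nabla_{\bx}^{m+1} U \in C^{\alpha}(\overline{B_{r_{m+1}}})$, completing the induction.

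The main obstacle is propagating through the induction the local $L^{\infty}$ bounds on $x_N \pa_N \nabla_{\bx}^k U$ for $k \le m$, since these quantities appear inside $\wtf_m$ via the Leibniz rule and the structural factor $D_{h_i} A_{NN} \sim x_N$. At each inductive level, this bound is obtained by rescaling the equation satisfied by $\nabla_{\bx}^k U$ about a point $x_0 \in B_{r''}$ with $x_{0N} > 0$ and invoking the classical interior elliptic estimate \cite[Theorem 8.32]{GT}, exactly as in the derivation of \eqref{eq-reg-23}. The H\"older data needed at level $k+1$ are precisely those supplied by the level-$k$ induction hypothesis, so the bookkeeping closes up and the argument carries through to all orders $m$.
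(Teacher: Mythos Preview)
Your inductive outline and handling of the normal-derivative contributions via (R12) and the rescaling estimate \eqref{eq-reg-23} are on the right track, but there is a genuine gap in the verification that $\wtf_m \in L^{q_1}(B_{r'}; x_N^{1-2\ga})$ uniformly in the $h_i$. When you expand $D_{h_1}\cdots D_{h_{m+1}}(A_{ab}\pa_b U)$ by the discrete Leibniz rule, the terms with $|S|=1$ produce contributions of the form $(D_{h_i}A_{aj})\,\pa_j\bigl(D_{S^c}U\bigr)$ with $j\le n$ and $|S^c|=m$; these involve an $(m{+}1)$-st order tangential difference quotient of $U$, which is of the \emph{same} order as $V$ itself, not lower order. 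The induction hypothesis $\nabla_{\bx}^k U\in C^\alpha$ for $k\le m$ gives no uniform $L^{q_1}$ control on such quantities (a $C^\alpha$ function has difference quotients of size $|h|^{\alpha-1}$, not bounded). Consequently you cannot simply invoke Lemma \ref{lemma-reg-1}(2) to conclude.

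This is precisely why the paper's argument does not bypass the Moser-type bootstrap of Lemma \ref{lemma-reg-2}: one must first show, at each inductive level, that $\nabla_{\bx}^{m+1}U\in L^q$ for every $q>1$ by running the $Z_{h,K,m}$ iteration leading to \eqref{eq-reg-22}, and only then apply Lemma \ref{lemma-reg-1}(2). The paper's proof therefore says ``argue as in the proof of the previous lemma'' and focuses on the one new ingredient: the nonlinear boundary term $fU^\beta$ forces a different treatment of the boundary integral in that Moser iteration, via $|D_h U^\beta|\le C\|U\|_{L^\infty}^{\beta-1}|D_h U|$. At higher orders, differentiating $U^\beta$ repeatedly produces factors $U^{\beta-m}$ with possibly negative exponents, and the positivity of $U$ (together with continuity) is needed to keep these bounded away from zero; you should also make this explicit in your $\widetilde t_m$ bookkeeping.
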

\begin{proof}
We can argue as in the proof of the previous lemma to show that $\nabla_{\bx} U \in C^{\alpha}(B_{r/12})$.
The only difference is the way to deal with the boundary integral in deducing \eqref{eq-reg-22}.
We easily observe that
\[\pa^{\ga}_{\nu, \wtf} (D_h U) = (D_h f) U^{\beta}_h + f (D_h U^{\beta}) \quad \text{on } \pa' B_{r/2}\]
where $\wtf$ is the function defined in \eqref{eq-wtf}. In addition, if we redefine
\[k = \begin{cases}
\begin{aligned}
\|\nabla_{\bx} F\|_{L^{q_1}(B_r; x_N^{1-2\ga})} + \|\nabla_{\bx} G\|_{L^{q_2}(B_r; x_N^{1-2\ga})} + \|\nabla_{\bx} B\|_{L^{q_2}(B_r; x_N^{1-2\ga})} \\
+ \|\nabla_{\bx} f\|_{L^{p_2}(\pa' B_r)}
\end{aligned} &\text{if it is nonzero},\\
\text{any positive number} &\text{otherwise}
\end{cases} \]
and set $\Xi_{h, M, m}$ as before, then we get $|D_h U^{\beta}| \le C\|U\|_{L^{\infty}(\pa' B_r)}^{\beta-1} |D_h U|$ in $\pa' B_{r/2}$ so that
\begin{multline*}
\int_{\pa' B_r} (|D_h f| U^{\beta}_h + |f| |D_h U^{\beta}|)\, \Xi_{h,K,m} d\bx \\
\le C \|U\|_{L^{\infty}(\pa' B_r)}^{\beta-1} \int_{\pa' B_r} \(\|U\|_{L^{\infty}(\pa' B_r)} {|D_h f| \over k} + \|f\|_{L^{\infty}(\pa' B_r)} \) (\chi_{r/6} Z_{h,K,m})^2 d\bx
\end{multline*}
where $C > 0$ depends only on $\beta$. Now, one can derive \eqref{eq-reg-22} having this estimate in hand and following the proof of \cite[Proposition 2.6]{JLX}.

\medskip
H\"older continuity of higher order derivatives $\nabla_{\bx}^m U$ for $m \in \mn$ can be achieved by iteration of this argument.
The positivity of $U$ guarantees that $U^{\beta-m}$ is bounded away from 0 for any $\beta \in (1, 2^*]$, so $|D_h U^{\beta-m}| \le C |D_h U|$ in $B_r$ for some $C > 0$.
\end{proof}

We next deduce H\"older regularity of the weighted normal derivative $x_N^{1-2\ga} \pa_N U$ of a weak solution $U$ to \eqref{eq-reg}
from suitable regularity of the solution, its tangential derivatives $\nabla_{\bx} U, \nabla_{\bx}^2 U$ and the coefficients $A$, $B$, $F$, $G$, $s$, $t$.
\begin{lemma}\label{lemma-reg-3}
Suppose that $U \in H^{1,2}(B_r; x_N^{1-2\ga})$ is a weak solution of \eqref{eq-reg} where the matrix $A$ satisfies \textnormal{(R1)}.
Furthermore, assume that $U, \nabla_{\bx} U, \nabla_{\bx}^2 U \in C^{\alpha}(\overline{B_r})$,
\begin{enumerate}
\item[(R24)] $B \in C^{\alpha}(\overline{B_r})$, $F_N = 0$ and
    $\sup_{x_N \in (0,r)} (\|\pa_i F_i(\cdot, x_N)\|_{C^{\alpha}(\overline{\pa' B_r})} + \|G(\cdot, x_N)\|_{C^{\alpha}(\overline{\pa' B_r})}) < \infty$
\item[(R32)] $s, t \in C^{\alpha}(\overline{\pa' B_r})$.
\end{enumerate}
Then $x_N^{1-2\ga} \pa_N U \in C^{\min\{\alpha, 2-2\ga\}} (\overline{B_{r/2}})$.
\end{lemma}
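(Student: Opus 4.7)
The plan is to reduce the regularity question to an ordinary differential equation in the $x_N$-variable with $\bx$ as a parameter. To this end, set
\[\Phi(\bx, x_N) := x_N^{1-2\ga}\, A_{NN}(\bx, x_N)\, \pa_N U(\bx, x_N) \quad \text{for } x_N > 0.\]
Since $A_{NN} \in C^2(\overline{B_r})$ is bounded below by $\Lambda_1 > 0$, proving $\Phi \in C^{\min\{\alpha, 2-2\ga\}}(\overline{B_{r/2}})$ is equivalent to the claim of the lemma. In the interior $B_r \cap \{x_N > 0\}$ the weight $x_N^{1-2\ga}$ is smooth and bounded above and below, so the equation is uniformly elliptic with $C^\alpha$ data; standard Schauder theory then gives $U \in C^{2,\alpha}_{\textnormal{loc}}(B_r \cap \{x_N > 0\})$, which legitimizes all forthcoming pointwise manipulations for $x_N > 0$.

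Expanding $\text{div}(x_N^{1-2\ga} A \nabla U)$ and using (R1) (so $A_{iN} = 0$) together with $F_N = 0$ from (R24), the equation rearranges to the pointwise identity
\[\pa_N \Phi(\bx, x_N) = -x_N^{1-2\ga}\, H(\bx, x_N), \qquad H := A_{ij}\pa_{ij}U + (\pa_i A_{ij})\pa_j U + \pa_i F_i + G - BU.\]
Under the hypotheses $U, \nabla_{\bx} U, \nabla_{\bx}^2 U \in C^{\alpha}(\overline{B_r})$, $A \in C^2$, $B \in C^\alpha$, and (R24), the function $H$ is bounded on $B_r$ and Hölder continuous in $\bx$ with exponent $\alpha$ uniformly in $x_N \in (0, r)$. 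Meanwhile, the boundary condition $\pa^{\ga}_{\nu, F} U = sU + t$ together with $F_N = 0$ yields
\[\Phi_0(\bx) := \lim_{x_N \to 0+} \Phi(\bx, x_N) = -\kappa_\ga^{-1} A_{NN}(\bx, 0)(s(\bx)U(\bx, 0) + t(\bx)),\]
which belongs to $C^\alpha(\overline{\pa' B_r})$ by (R32) and the Hölder regularity of $U$. Integrating the ODE from $0$ to $x_N$ (with an $\ep \to 0$ approximation to handle the limit at the boundary) produces the explicit representation
\[\Phi(\bx, x_N) = \Phi_0(\bx) + I(\bx, x_N), \qquad I(\bx, x_N) := -\int_0^{x_N} t^{1-2\ga} H(\bx, t)\, dt.\]

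It remains to estimate the Hölder norm of $I$. For a pure $x_N$-increment, assuming $x_{N,1} > x_{N,2}$,
\[|I(\bx, x_{N,1}) - I(\bx, x_{N,2})| \le \frac{\|H\|_{L^\infty}}{2-2\ga}\, \bigl| x_{N,1}^{2-2\ga} - x_{N,2}^{2-2\ga} \bigr| \le C\, |x_{N,1} - x_{N,2}|^{\min\{1, 2-2\ga\}},\]
using concavity of $t \mapsto t^{2-2\ga}$ when $\ga > \frac{1}{2}$ and boundedness of the domain when $\ga < \frac{1}{2}$; on the bounded interval this is bounded by $C|x_{N,1}-x_{N,2}|^{\min\{\alpha, 2-2\ga\}}$ since $\alpha < 1$. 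For a pure $\bx$-increment,
\[|I(\bx_1, x_N) - I(\bx_2, x_N)| \le [H(\cdot, t)]_{C^\alpha}\, |\bx_1 - \bx_2|^\alpha \int_0^{x_N} t^{1-2\ga}\, dt \le C\, |\bx_1 - \bx_2|^\alpha.\]
Combining the two estimates gives $I \in C^{\min\{\alpha, 2-2\ga\}}(\overline{B_{r/2}})$, so that $\Phi = \Phi_0 + I$ inherits the same regularity. Dividing by $A_{NN}$, which is $C^2$ and bounded below, we conclude $x_N^{1-2\ga}\pa_N U \in C^{\min\{\alpha, 2-2\ga\}}(\overline{B_{r/2}})$.

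The main obstacle is ensuring that the boundary trace of $\Phi$ exists pointwise and coincides with $\Phi_0$ as given by the weakly-formulated boundary condition; this is resolved by using interior $C^{2,\alpha}$ regularity away from $\{x_N = 0\}$ to derive the ODE rigorously for $x_N > 0$, then passing to the limit in the fundamental theorem of calculus representation, where the convergence is guaranteed by the integrability of $t^{1-2\ga}$ at $0$. A secondary subtlety is the choice of exponent $\min\{\alpha, 2-2\ga\}$: the $2-2\ga$ contribution comes from the one-sided loss incurred by integrating the degenerate weight against a merely bounded $H$, while the $\alpha$ contribution comes from the intrinsic Hölder regularity of the coefficients, and the two are optimally balanced by the argument above.
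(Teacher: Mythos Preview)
Your proof is correct and follows essentially the same approach as the paper: both rewrite the equation as an ODE $\pa_N(x_N^{1-2\ga}A_{NN}\pa_N U) = -x_N^{1-2\ga}H$ in the normal variable, integrate from $0$ to $x_N$ using the boundary condition to identify the trace, and then estimate the H\"older norm of the resulting integral by splitting into tangential and normal increments. Your write-up is in fact slightly more careful than the paper's in justifying the pointwise manipulations via interior regularity; note only that full interior $C^{2,\alpha}$ is a minor over-claim under (R24) (which gives H\"older continuity of $G$ and $\pa_i F_i$ only in $\bx$), but $W^{2,p}_{\textnormal{loc}}$ regularity for all $p$ suffices for the argument.
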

\begin{proof}
The first equation in \eqref{eq-reg} can be rewritten as
\[- \pa_N (x_N^{1-2\ga} A_{NN} \pa_N U) = x_N^{1-2\ga} \left[\pa_i (A_{ij} \pa_j U) - BU + G + \pa_i F_i \right] =: x_N^{1-2\ga} Q \quad \text{in } B_r.\]
Hence it holds that
\[- A_{NN}(x_0) \cdot x_{N0}^{1-2\ga} \pa_N U(x_0) = \kappa^{-1} A_{NN}(\bx_0,0) (sU + t)(\bx_0,0)
+ \underbrace{\int_0^{x_{N0}} x_N^{1-2\ga} Q(\bx_0, x_N) dx_N}_{=: \mathcal{Q}(x_0)}\]
for any point $x_0 = (\bx_0, x_{N0}) \in \overline{B_r}$. From this relation and the bound that $\Lambda_1 \le A_{NN}(x_0) \le \Lambda_2$, coming from the assumption on $A$, we immediately observe that
\begin{multline*}
\|x_N^{1-2\ga} \pa_N U\|_{L^{\infty}(B_r)} \le C \( \|U\|_{L^{\infty}(B_r)} + \|t\|_{L^{\infty}(\pa' B_r)} \right. \\
\left. + \sum_{\ell = 1}^2 \|\nabla_{\bx}^{\ell} U\|_{L^{\infty}(B_r)} + \sum_{i=1}^n \|\pa_i F_i\|_{L^{\infty}(B_r)} + \|G\|_{L^{\infty}(B_r)} \)
\end{multline*}
where $C > 0$ depends only on $n$, $\ga$, $r$, $\Lambda_1$, $\Lambda_2$, $\|s\|_{L^{\infty}(\pa' B_r)}$ and $\|B\|_{L^{\infty}(B_r)}$.
In addition, a simple computation reveals that there exists $C > 0$ counting only on $\ga$ and $r$ such that
\[\|\mathcal{Q}\|_{C^{\min\{\alpha, 2-2\ga\}}(\overline{B_{r/2}})} \le C \sup_{x_N \in (0, r/2)} \|Q(\cdot, x_N)\|_{C^{\alpha}(\overline{\pa' B_{r/2}})}.\]
Therefore
\begin{align*}
&\ \|x_N^{1-2\ga} \pa_N U\|_{C^{\min\{\alpha, 2-2\ga\}} (\overline{B_{r/2}})} \\
&\le C\(\|U\|_{C^{\alpha}(\overline{\pa' B_r})} + \|t\|_{C^{\alpha}(\overline{\pa' B_r})}
+ \sup_{x_N \in (0, r/2)} \|Q(\cdot, x_N)\|_{C^{\alpha}(\overline{\pa' B_{r/2}})} \) \\
&\le C\( \sum_{\ell = 0}^2 \|\nabla_{\bx}^{\ell} U\|_{C^{\alpha}(\overline{B_r})} + \|t\|_{C^{\alpha}(\overline{\pa' B_r})} \)\\
&\ + C \sup_{x_N \in (0,r)} \( \sum_{i=1}^n \|\pa_i F_i (\cdot, x_N) \|_{C^{\alpha}(\overline{\pa' B_r})} + \|G (\cdot, x_N) \|_{C^{\alpha}(\overline{\pa' B_r})}\)
\end{align*}
where the constant $C > 0$ depends only on $n$, $\ga$, $r$, $A$, $\|s\|_{C^{\alpha}(\overline{\pa' B_r})}$ and $\|B\|_{C^{\alpha}(\overline{B_r})}$.
\end{proof}

\subsection{Proof of Theorem \ref{thm-reg}}
Finally, collecting all the results obtained in the previous subsection together, we deduce a regularity result on our main equation \eqref{eq-ps} or its extension \eqref{eq-ps-ext}.
It particularly validates Theorem \ref{thm-reg}.
\begin{prop}
Assume that $(X^N, g^+)$ is an asymptotically hyperbolic manifold, $(M^n, [\hh])$ is its conformal infinity,
$\rho$ is the geodesic boundary defining function of $(M, \hh)$ and $\bg = \rho^2 g^+$.
In addition, we suppose that the mean curvature $H$ on $(M, \hh)$ as a submanifold of $(\ox, \bg)$ vanishes.
If $f \in C^{\infty}(M)$ and $U \in \hx$ weakly solves \eqref{eq-ps-ext},
then the trace $u \in H^{\ga}(M)$ of $U$ on $M$ is in fact of class $C^{\infty}(M)$ and a classical solution to \eqref{eq-ps}.
Moreover, $\nabla_{\bx}^m U$ and $\rho^{1-2\ga} \pa_{\rho} U$ are H\"older continuous on $\ox$ for every $m \in \mn$.
\end{prop}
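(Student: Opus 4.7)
The conclusion is local and $\ox$ is compact, so it suffices to work in a neighborhood of an arbitrary boundary point $y \in M$; regularity on compact subsets of $X$ is standard since $\rho^{1-2\ga}$ is bounded between positive constants there. The strategy is to cast \eqref{eq-ps-ext} as an instance of the weighted elliptic problem \eqref{eq-reg} and apply the cascade of lemmas from Section \ref{sec-reg}. By the conformal covariance \eqref{eq-cc} and smoothness and positivity of any conformal factor, we may change the representative of $[\hh]$ freely. Following the discussion after Theorem \ref{thm-reg}, we invoke Cao \cite{Ca} and G\"unther \cite{Gun} to select $\hh_0 \in [\hh]$ such that $\sqrt{|\hh_0|} \equiv 1$ in $\hh_0$-normal coordinates centered at $y$; letting $\rho_0 = x_N$ be the associated geodesic defining function and $\bg_0 = \rho_0^2 g^+$, the Fermi chart yields $\bg_0 = dx_N^2 + h_{x_N}$, so $\bg_0^{iN} = 0$ and $\bg_0^{NN} = 1$.

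\textbf{Fitting into \eqref{eq-reg}.} Expanding the weighted divergence, the first equation of \eqref{eq-ps-ext} takes the form
\[-\pa_a\bigl(x_N^{1-2\ga}\, A_{ab}\, \pa_b U\bigr) + x_N^{1-2\ga}\, B\, U = 0 \quad \text{in } B^N_+(0,r),\]
with $A_{ab} := \sqrt{|\bg_0|}\, \bg_0^{ab}$ and, through \eqref{eq-E}, $B := -\tfrac{n-2\ga}{2}\,(\pa_N \sqrt{|\bg_0|})/x_N$; the boundary condition is $\pa^{\ga}_{\nu} U = f\, U^{(n+2\ga)/(n-2\ga)}$ on $B^n(0,r)$. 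Three structural checks are in order: (i) $A_{iN} = \sqrt{|\bg_0|}\, \bg_0^{iN} = 0$ from the Fermi splitting, so (R1) holds; (ii) the Cao-G\"unther normalization gives $\sqrt{|\bg_0|}(\bx, 0) \equiv 1$ for $\bx$ near $0$, whence $A_{NN}(x) = 1 + x_N\, A_{NN}'(x)$ with $A_{NN}' \in C^{\infty}$, yielding (R11) and (R12); (iii) the assumption $H = 0$ on $M$ combined with Remark \ref{rmk-E} gives $\pa_N \sqrt{|\bg_0|} = O(x_N)$ near $M$, so $B$ extends smoothly to the closure. Hence the problem matches \eqref{eq-reg} with $F \equiv G \equiv 0$, $s \equiv 0$, and $t = f\, U^{(n+2\ga)/(n-2\ga)}$.

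\textbf{Cascade of regularity.} From $U \in \hx$ and the trace $u = U|_M \in L^{2^*+1}(M)$, the datum $t$ initially lies in $L^{2n/(n+2\ga)}(M)$; finitely many Moser-type iterations of Lemma \ref{lemma-reg-1}(1), each improving integrability of $u$ and hence of $t$, produce $t \in L^{p_1}(M)$ with $p_1 > n/(2\ga)$, and Lemma \ref{lemma-reg-1}(2) then yields $U \in C^{\alpha_0}(\overline{B^N_+(0,r_0)})$ for some $\alpha_0 \in (0,1)$ and small $r_0 > 0$. Replacing $U$ by $|U|$ if necessary (cf. Subsection \ref{subsec-cons}) and invoking the strong maximum principle in Remark \ref{rmk-reg-1}, we may assume $U > 0$ on a smaller ball; consequently $U^{(n+2\ga)/(n-2\ga)}$ is a smooth positive function of $U$, and Corollary \ref{cor-reg-2} delivers $\nabla_{\bx}^m U \in C^{\alpha}$ locally for every $m \in \mn$. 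Restricting to $\{x_N = 0\}$ shows $u \in C^{\infty}$ near $y$; since $y \in M$ was arbitrary, $u \in C^{\infty}(M)$ and solves \eqref{eq-ps} classically. Finally, with $U$, $\nabla_{\bx} U$, $\nabla_{\bx}^2 U$ H\"older continuous, Lemma \ref{lemma-reg-3} upgrades $x_N^{1-2\ga}\, \pa_N U$ to H\"older continuity, and a finite Fermi-chart covering produces $\rho^{1-2\ga}\, \pa_{\rho} U \in C^{\beta}(\ox)$.

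\textbf{Principal obstacle.} The critical subtlety is hypothesis (R11). For a generic $\hh$, Lemma \ref{lemma-bg-exp} gives $\sqrt{|\bg|}(\bx, 0) = 1 - \tfrac{1}{6}\, R_{ij}[\hh](y)\, x_i x_j + O(|\bx|^3)$, which is \emph{not} of the form $c + A_{NN}'(x)\, x_N$. Then the difference-quotient equation \eqref{eq-reg-21} acquires, via the definition \eqref{eq-wtf} of $\wtf$, an inhomogeneous term $(D_h A_{NN})\, \pa_N U$ whose restriction to $\{x_N = 0\}$ does not vanish; this is intractable because $\pa_N U$ behaves like $x_N^{2\ga - 1}$ near $M$. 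The Cao-G\"unther choice flattens $\sqrt{|\bg_0|}$ to $\equiv 1$ on $M$ near $y$, so $D_h A_{NN} = O(x_N)$ and $(D_h A_{NN})\, \pa_N U = O(x_N^{2\ga})$, which is integrable against the weight $x_N^{1-2\ga}$ and fits within the energy estimates underpinning Lemma \ref{lemma-reg-2} and Corollary \ref{cor-reg-2}.
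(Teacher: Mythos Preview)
Your overall strategy—localize via the Cao--G\"unther conformal normalization so that (R11) holds, then run the cascade Lemma~\ref{lemma-reg-1} $\to$ Corollary~\ref{cor-reg-2} $\to$ Lemma~\ref{lemma-reg-3}—matches the paper's, and your discussion of the ``principal obstacle'' is accurate and is exactly why the conformal change is introduced.

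There is, however, a genuine gap in the bootstrap. You set $s \equiv 0$ and $t = f\, u^{2^*}$, but then $t$ lies only in $L^{2n/(n+2\ga)}(\pa' B_r)$ initially, whereas hypothesis (R3) for Lemma~\ref{lemma-reg-1} demands $t \in L^{p_2}$ with $p_2 > n/(2\ga)$. Since $2n/(n+2\ga) < n/(2\ga)$ whenever $n > 2\ga$, the very first ``Moser-type iteration'' you invoke cannot be launched as stated. The paper instead splits the boundary nonlinearity as $s\,U$ with $s := f\, u_y^{2^*-1}$ and $t = 0$: then $s \in L^{n/(2\ga)}(\pa' B_r)$ by the trace inequality, and shrinking the ball makes $\|s\|_{L^{n/(2\ga)}} < \delta_0$, so Lemma~\ref{lemma-reg-1}(1) applies and raises $u$ to $L^{2n(n-2\ga+2)/(n-2\ga)^2}$; this already puts $s$ into $L^{p_1}$ with $p_1 > n/(2\ga)$, after which Lemma~\ref{lemma-reg-1}(2) gives H\"older continuity. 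The fix to your argument is simply to move the nonlinearity from $t$ into $s$.

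Two smaller points. First, you do not treat $\ga = \tfrac{1}{2}$ separately, but the lemmas of Section~\ref{sec-reg} are stated for $\ga \ne \tfrac{1}{2}$; the paper dispatches that case by citing Cherrier's classical elliptic regularity. Second, positivity $U > 0$ on $\ox$ is already part of the hypothesis \eqref{eq-ps-ext}, so the step ``replacing $U$ by $|U|$ and invoking the strong maximum principle'' is unnecessary.
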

\begin{proof}
The standard elliptic estimate works if $\ga = \frac{1}{2}$ as confirmed by Cherrier \cite{Ch}, so we assume that $\ga \in (0,1) \setminus \{\frac{1}{2}\}$.

\medskip
Fix any $y \in M$ and choose a smooth metric $\hh_y \in [\hh]$ on $M$ such that $|\hh_y| = 1$ around $y$ whose existence was guaranteed by Cao \cite{Ca} and G\"unther \cite{Gun}.
Let $\rho_y$ be the associated geodesic defining function in $X$, $\bg_y = \rho_y^2 \hh_y$ be a smooth metric on $\ox$
and $w_y$ be a positive smooth function on $M$ such that $\hh_y = w_y^{4 \over n-2\ga} \hh$ on $M$.
Since $|\bg_y| = |\hh_y| = 1$ on $M$ near $y$, it holds that $|\bg_y| = 1 + O(\rho_y)$ in $X$ near $y$.
As a matter of fact, we have that $|\bg_y| = 1 + O(\rho_y^2)$ by the condition $H = 0$ on $M$.

In view of Lemma \ref{lemma-GQ-ext}, we know that $P_{\hh}^{\ga} u = f u^{2^*}$ in $H^{-\ga}(M)$.
By the conformal covariance property \eqref{eq-sym}, the function $u_y := w_y^{-1} u \in H^{\ga}(M)$ weakly solves $P_{\hh_y}^{\ga} (u_y) = f u_y^{2^*}$ on $M$.
Besides, there is a solution $U_y \in \hx$ to \eqref{eq-ps-ext}, in which the subscript $y$ is attached suitably, such that $U_y = u_y$ on $M$.

Denote by $x$ the Fermi coordinate on $(\ox, \bg_y)$ around $y$.
We assume that it is defined in the geodesic half-ball $B_{\bg}(y, r) := \{\tilde{y} \in X: \text{dist}_{\bg}(y, \tilde{y}) < r\}$
identified with $B_r = B^N_+(0,r) \subset \mr^N_+$.
Then, owing to Remark \ref{rmk-E} and the assumption $H = 0$ on $M$, the term $E(\rho_y) = E(x_N)$ can be expressed as $x_N^{1-2\ga} B$ for some function $B \in C^{\infty}(\overline{B_r})$.
Consequently, if we set $A_{ab} = \sqrt{|\bg_y|}\, \bg_y^{ab}$ and $F_a = G = 0$ on $B_r$,
then the equation \eqref{eq-ps-ext} of $U_y$ can be described as \eqref{eq-reg} with $s = u_y^{2^*-1}$, $t = 0$ on $\pa' B_r$ or \eqref{eq-reg-2} with $\beta = 2^*$,
where conditions (R1), (R12), (R2), (R23), (R24), (R3) and (R32) are all fulfilled.

We first regard \eqref{eq-ps-ext} as \eqref{eq-reg}.
By employing the Sobolev trace inequality, we see that $s \in L^{n \over 2\ga}(\pa' B_r)$.
Thus we can apply Lemma \ref{lemma-reg-1} (1) to derive that $s \in L^{p_1}(\pa' B_{r/2})$ for some $p_1 > \frac{n}{2\ga}$.
Because of Lemma \ref{lemma-reg-1} (2), this implies that $U_y \in C^{\alpha}(\overline{B_{r/4}})$ for some $\alpha \in (0,1)$.
Furthermore, regarding \eqref{eq-ps-ext} as \eqref{eq-reg-2}, we realize from Corollary \ref{cor-reg-2} that $\nabla_x^m u_y(0)$ exists for all $m \in \mn$.
This means that $u$ is infinitely differentiable at $y$, and for $y$ is arbitrary, it leads that $u \in C^{\infty}(M)$.

The last assertion in the statement can be proved via \eqref{eq-v_m-form} and Lemma \ref{lemma-reg-3}.
This concludes the proof.
\end{proof}

\appendix
\section{End-point Case of the Fractional Yamabe problem}\label{sec-Y}
This section is devoted to proving Theorem \ref{thm-Y-end}.
By Proposition \ref{prop-cpt}, it is enough to verify \eqref{eq-str}.
Three mutually exclusive cases (B$^{\prime}$), (C$^{\prime}$) and (D$^{\prime}$) will be handled in Propositions \ref{prop-Y-end-1}, \ref{prop-Y-end-2} and \ref{prop-Y-end-3}, respectively.
Our proof will be rather sketchy, so we ask the reader to consult the papers \cite{Al, Es2, Es, Ma2, Ma} on the boundary Yamabe problem for more detailed explanation under analogous settings.

\begin{prop}\label{prop-Y-end-1}
Suppose that $n = 3$, $\ga = \frac{1}{2}$, $(M, \hh)$ is the non-umbilic boundary of $(\ox, \bg)$ and \eqref{eq-R-dec} is valid. Then \eqref{eq-str} holds.
\end{prop}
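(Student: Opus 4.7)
My plan is to imitate the proof of Condition~(B) for Theorem~\ref{thm-ps} at the end-point $(n,\ga)=(3,\tfrac{1}{2})$. In this borderline dimension the constant $\mcm_1(n,\ga)$ in \eqref{eq-c_1} blows up through the factor $(n-2-2\ga)$ in its denominator, and this divergence is recovered on the energy side as a logarithmic gain $\ep^2|\log\ep|$ that dominates the $O(\ep^2)$ remainders, exactly as in Marques' treatment of the boundary Yamabe problem in dimension three \cite{Ma}. The strict inequality \eqref{eq-str} is what must be established, so I will construct a concrete test function that achieves it.

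The first step is to invoke Lemma \ref{lemma-g-mtr} at a non-umbilic point $y \in M$, whose existence is part of the hypothesis. This produces $\hh_0 \in [\hh]$, an associated geodesic defining function $\rho_0$ and the metric $\bg_0 = \rho_0^2 g^+$ on $\ox$ for which $H = 0$ on $M$, $R_{ij}[\hh_0](y) = 0$ and $R_{\rho_0\rho_0}[\bg_0](y)$ is a nonzero multiple of $\|\tni_0(y)\|^2$. The vanishing mean curvature kills the boundary integral in $I^{1/2}$ appearing in \eqref{eq-I^ga}, and Remark~\ref{rmk-E} (combined with \eqref{eq-R-dec}) gives the improved bound $|E(\rho_0)| \le C\rho_0^{1-2\ga}$, so every integral below is absolutely convergent.

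For $\ep>0$ small I will use the test function $\Phi_\ep := \chi_{r_2}(W_{\ep,0} + \Psi_{1\ep}) \in \hx$, with $W_{\ep,0}$ the bubble \eqref{eq-bubble-12}, $\chi_{r_2}$ the radial cut-off \eqref{eq-chi}, and $\Psi_{1\ep}$ of the form \eqref{eq-psi_1} with constant $C_1 \in \mr$ to be optimized, all written in the Fermi coordinate around $y$ associated with $\bg_0$. Because $\Psi_{1\ep}$ vanishes on $\pa \mr^N_+$, the boundary normalization simplifies to
\[
\int_M \Phi_\ep^{\,2^*+1}\, dv_{\hh_0} = \int_{\mr^3} w_{1,0}^{\,3}\,d\bx + O(\ep^2).
\]
Expanding $I^{1/2}(\Phi_\ep)$ via Lemma \ref{lemma-bg-exp} at $y$, the linear-in-$x$ contributions drop by parity and by the normalization of $\hh_0$. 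After the optimal choice of $C_1$ to cancel the leading quadratic cross term, the residual piece beyond the pure Sobolev term is
\[
-\,C\,\|\tni_0(y)\|^2 \int_{B^N_+(0,\,r_2/\ep)} x_N^{\,3-2\ga}\,|\nabla W_{1,0}|^2\,dx \;+\; O(\ep^2)
\]
for an explicit constant $C>0$. At $(n,\ga)=(3,\tfrac{1}{2})$ the weight $x_N^{2}$ balances exactly the decay $|x|^{-6}$ of $|\nabla W_{1,0}|^2$ against the $4$-dimensional measure, so the truncated integral grows like $|\log\ep|$ as $\ep \to 0$. Combined with \eqref{eq-H^n-ene} this yields
\[
\ola^{1/2}(X,[\hh]) \le \frac{I^{1/2}(\Phi_\ep)}{\bigl(\int_M \Phi_\ep^{\,3}\,dv_{\hh_0}\bigr)^{2/3}} \le \ola^{1/2}(\mh^4,[\hh_c]) - C'\,\|\tni_0(y)\|^2\,\ep^2|\log\ep| + O(\ep^2),
\]
which is strictly less than $\ola^{1/2}(\mh^4,[\hh_c])$ for $\ep$ small enough since $\|\tni_0(y)\| \ne 0$.

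The main obstacle will be the careful bookkeeping of the $O(\ep^2)$ remainders generated by (i) the cubic error $O(|x|^3)$ in the expansion of $\sqrt{|\bg_0|}$ and $\bg_0^{ij}$ supplied by Lemma \ref{lemma-bg-exp}, (ii) the cross terms between $W_{\ep,0}$ and $\Psi_{1\ep}$ in the quadratic form and their interaction with $E(\rho_0)$, and (iii) the error originating from the support of $\chi_{r_2}$, verifying in each case that none of them generates a competing $\ep^2|\log\ep|$ contribution. The decay assumption \eqref{eq-R-dec} is precisely what controls (iii), and the specific normalization of $\hh_0$ supplied by Lemma \ref{lemma-g-mtr} is what prevents spurious linear terms and contributions from $R_{ij}[\hh_0]$ from contaminating the leading order. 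Once the estimate above is in hand, the proposition follows at once from Proposition \ref{prop-cpt}.
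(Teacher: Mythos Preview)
There is a genuine gap. Your claim that the correction $\Psi_{1\ep}$ alone, for the optimal $C_1$, produces a strictly negative $\ep^2|\log\ep|$ contribution is incorrect. If you carry out the computation in the Fermi coordinate at $y$ with the metric normalized as in Lemma~\ref{lemma-g-mtr}, you find
\[
I^{1/2}\bigl(\chi_{r_2}(W_{\ep,0}+\Psi_{1\ep})\bigr)
= \int_{\mr^4_+}|\nabla W_{1,0}|^2\,dx
+ \bigl(1+4C_1+4C_1^2\bigr)\,\frac{\pi}{24}\,\alpha_{3,\frac12}^2\,|\mathbb{S}^2|\,\|\tni(y)\|^2\,\ep^2\log\Bigl(\tfrac{r_2}{\ep}\Bigr)+O(\ep^2).
\]
The quadratic $1+4C_1+4C_1^2=(1+2C_1)^2$ is nonnegative and vanishes exactly at $C_1=-\tfrac12$; it is never strictly negative. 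So the best you can do with the test function $\chi_{r_2}(W_{\ep,0}+\Psi_{1\ep})$ is to kill the $\ep^2|\log\ep|$ term, arriving at
\[
I^{1/2}\bigl(\chi_{r_2}(W_{\ep,0}+\Psi_{1\ep})\bigr)\le \ola^{1/2}(\mh^4,[\hh_c])+O(\ep^2),
\]
which is not enough for \eqref{eq-str}. Your heuristic that ``$\mcm_1(n,\ga)$ blows up and this is recovered as a logarithmic gain'' is misleading here: the divergent integral is indeed $\int x_N^2|\nabla W_{1,0}|^2$, but in the end-point dimension its coefficient, after optimizing over $C_1$, is exactly zero rather than strictly negative.

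What the paper does to close this gap is to add a \emph{further} correction that does not vanish on the boundary, namely $C_1'\,\tni_{ij}(y)\,x_ix_j\,\pa_r W_{\ep,0}$, and to use the argument of Marques \cite[pp.~400--403]{Ma} to show that for a suitable small $C_1'$ the enlarged test function
\[
\chi_{r_2}\Bigl(W_{\ep,0}+\Psi_{1\ep}+C_1'\,\tni_{ij}(y)\,x_ix_j\,\pa_r W_{\ep,0}\Bigr)
\]
satisfies $I^{1/2}\le \ola^{1/2}(\mh^4,[\hh_c])-C\ep^2|\log\ep|+O(\ep^2)$. The extra boundary correction is essential: since $\Psi_{1\ep}|_M=0$, the trace of your $\Phi_\ep$ is just $\chi_{r_2}w_{\ep,0}$, and no amount of interior tweaking can improve the denominator; the strict gain must come from perturbing the trace itself, which is precisely what Marques' additional term accomplishes.
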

\begin{proof}
We fix a non-umbilic point $y \in M$ and set $\Psi_{1\ep}$ as in \eqref{eq-psi_1}. It suffices to prove that
\begin{equation}\label{eq-Y-end-1}
I^{\ga}(\chi_{r_2} (W_{\ep,0} + \Psi_{1\ep})) \le \ola^{\ga}(\mh^4, [\hh_c]) + O(\ep^2)
\end{equation}
for some $C_1 \in \mr$ where $\chi_{r_2}$ is the cut-off function defined after \eqref{eq-chi}.
Then the argument of Marques \cite[pp. 400--403]{Ma} will give
\begin{equation}\label{eq-Ma}
I^{\ga}(\chi_{r_2} (W_{\ep,0} + \Psi_{1\ep} + C_1' \text{II}_{ij}(y) x_ix_j \pa_r W_{\ep,0})) \le \ola^{\ga}(\mh^4, [\hh_c]) - C \ep^2 |\log{\ep}| + O(\ep^2)
\end{equation}
for some $C > 0$ and small $C_1' \in \mr$, which in particular tells us that \eqref{eq-str} holds.

We may assume that the metrics $\bg$ on $\ox$ and $\hh$ on $M$ satisfy \eqref{eq-hh_0}. As in \cite[Lemma 2.6]{KMW},
using Lemma \ref{lemma-bg-exp}, \eqref{eq-bubble-12} and the identity $\pa_i W_{\ep, 0}(x) = x_i r^{-1} \pa_r W_{\ep, 0}(x)$ which holds for all $x \in \mr^N_+$, we calculate
\begin{align*}
\int_X |\nabla (\chi_{r_2} W_{\ep, 0})|_{\bg}^2 dv_{\bg} &= \int_{\mr^4_+} |\nabla W_{\ep, 0}|^2 dx \\
&\ + (3\text{II}_{ik}(y) \text{II}_{kj}(y) + R_{iNjN}[\bg](y)) \int_{B^4_+(0,r_2)} x_N^2 \pa_i W_{\ep, 0} \pa_j W_{\ep, 0} dx \\
&\ - {1 \over 2} \(\|\text{II}(y)\|^2 + R_{NN}[\bg](y)\) \int_{B^4_+(0,r_2)} x_N^2 |\nabla W_{\ep, 0}|^2 dx + O(\ep^2) \\
&= \int_{\mr^4_+} |\nabla W_{1,0}|^2 dx + {7 \over 12} \|\text{II}(y)\|^2 \ep^2 \int_{B^4_+(0,r_2\ep^{-1})} x_N^2 |\nabla_{\bx} W_{1,0}|^2 dx \\
&\ + {1 \over 8} \|\text{II}(y)\|^2 \ep^2 \int_{B^4_+(0,r_2\ep^{-1})} x_N^2 |\nabla W_{1,0}|^2 dx + O(\ep^2) \\
&= \int_{\mr^4_+} |\nabla W_{1,0}|^2 dx + {5\pi \over 48} \alpha_{3, \frac{1}{2}}^2 |\mathbb{S}^2| \|\text{II}(y)\|^2 \ep^2 \log\({r_2 \over \ep}\) + O(\ep^2)
\end{align*}
where $\nabla_{\bx} W_{1,0} = (\pa_1 W_{1,0}, \cdots, \pa_n W_{1,0})$ and $|\mathbb{S}^2| = 4\pi$ is the surface measure of the unit 2-sphere.
Owing to \eqref{eq-E} and Lemma \ref{lemma-bg-exp}, we also have
\[E(x_N) = \(\|\text{II}(y)\|^2 + R_{NN}[\bg](y)\) (1+ O(|x|)) = -{1 \over 4} \|\text{II}(y)\|^2 (1+ O(|x|))\]
in $B^4_+(0, 2r_2)$. Thus
\begin{align*}
\int_X E(\rho) (\chi_{r_2} W_{\ep, 0})^2 dv_{\bg} &= -{1 \over 4} \|\text{II}(y)\|^2 \ep^2 \int_{B^4_+(0,r_2)} W_{1,0}^2 dx + O(\ep^2) \\
&= - {\pi \over 16} \alpha_{3, \frac{1}{2}}^2 |\mathbb{S}^2| \|\text{II}(y)\|^2 \ep^2 \log\({r_2 \over \ep}\) + O(\ep^2).
\end{align*}
Consequently, we derive from the definition \eqref{eq-I^ga} of $I^{\ga}$ that
\[I^{\ga}(\chi_{r_2} W_{\ep,0}) = \int_{\mr^4_+} |\nabla W_{1,0}|^2 dx + {\pi \over 24} \alpha_{3, \frac{1}{2}}^2 |\mathbb{S}^2| \|\text{II}(y)\|^2 \ep^2 \log\({r_2 \over \ep}\) + O(\ep^2).\]
Moreover, a direct computation shows
\begin{align*}
&\ I^{\ga}(\chi_{r_2} (W_{\ep,0} + \Psi_{1\ep})) \\
&= I^{\ga}(\chi_{r_2} W_{\ep,0}) + 4 \text{II}_{ij}(y) \int_{B^4_+(0,r_2)} \pa_i W_{\ep} \pa_j \Psi_{1\ep} dx + \int_{B^4_+(0,r_2)} |\nabla \Psi_{1\ep}|^2 dx + O(\ep^2) \\
&= \int_{\mr^4_+} |\nabla W_{1,0}|^2 dx + \(1 + 4C_1 + 4C_1^2\) {\pi \over 24} \alpha_{3, \frac{1}{2}}^2 |\mathbb{S}^2| \|\text{II}(y)\|^2 \ep^2 \log\({r_2 \over \ep}\) + O(\ep^2);
\end{align*}
compare with \cite[Proposition 2.8]{KMW}. Hence, choosing $C_1 = -\frac{1}{2}$, we observe the validity of \eqref{eq-Y-end-1}. This completes the proof.
\end{proof}
\noindent Taking the above proposition into consideration, one can guess that the local geometry on $M$ still allows us to derive \eqref{eq-str}
when $n = 3$, $\ga \in (0,\frac{1}{2})$, $(M, \hh)$ is the non-umbilic boundary of $(\ox, \bg)$ and \eqref{eq-R-dec} holds.
However, there seems a computational difficulty in employing test functions whose forms are similar to that of the function in the previous proof.

\begin{prop}\label{prop-Y-end-2}
Suppose that $n = 4$, $\ga = \frac{1}{2}$, $(M, \hh)$ is the umbilic boundary of $(\ox, \bg)$, $R_{\rho\rho;\rho}[\bg](y) < 0$ for some $y \in M$ and \eqref{eq-R-dec} is valid. Then \eqref{eq-str} holds.
\end{prop}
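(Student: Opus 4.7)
The approach mirrors the analysis of Condition (c) in Theorem \ref{thm-ps}, but at the endpoint dimension $n = 3 + 2\ga = 4$, where the integral $\int_{\mr^N_+} x_N^{4-2\ga} |\nabla W_{1,0}|^2 dx$ diverges logarithmically rather than converging. As a consequence, the $\ep^3$ improvement of the test-function energy seen for $n > 3 + 2\ga$ is enhanced here to $\ep^3 |\log \ep|$. Since $R_{\rho\rho;\rho}[\bg]$ is intrinsic, I would first apply Lemma \ref{lemma-g-mtr} to pick a representative $\hh_0 \in [\hh]$ on $M$, together with the associated geodesic defining function $\rho_0$ and $\bg_0 = \rho_0^2 g^+$, such that $H = 0$ on $M$, $R_{ij}[\hh_0](y_0) = 0$ and $R_{\rho_0\rho_0}[\bg_0](y_0) = 0$ at a chosen point $y_0 \in M$ with $R_{\rho_0\rho_0;\rho_0}[\bg_0](y_0) < 0$; the vanishing of $R_{\rho_0\rho_0}[\bg_0](y_0)$ is immediate from umbilicity ($\tni_0 \equiv 0$). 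Then use $U_\ep := \chi_{r_2} W_{\ep,0}$ as a test function in the Fermi coordinates on $(\ox, \bg_0)$ centered at $y_0$, identified with $0 \in \mr^5_+$.

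Next I would derive an energy expansion of the form
\[
I^{\ga}(U_\ep) \le \kappa_\ga \int_{\mr^5_+} |\nabla W_{1,0}|^2 dx + \ep^3 \kappa_\ga R_{\rho_0\rho_0;\rho_0}[\bg_0](y_0) \cdot c_* |\log \ep| + O(\ep^3)
\]
for an explicit constant $c_* > 0$; note $\kappa_\ga = 1$ and $x_N^{1-2\ga} = 1$ since $\ga = \frac{1}{2}$. To obtain this I would expand $\bg_0^{ab}$ and $\sqrt{|\bg_0|}$ to third order in the Fermi coordinates. Because umbilicity holds on all of $M$ (so $\tni \equiv 0$ there) and $\hh_0$ has been chosen as above, all zeroth-, first-, and second-order terms of $\sqrt{|\bg_0|} - 1$ and of $\bg_0^{ab} - \delta_{ab}$ vanish at $y_0$. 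At third order, after multiplying by $|\nabla W_{\ep,0}|^2$, which is radially symmetric in $\bx$, and rescaling $x = \ep y$, only the purely-$x_N^3$ contributions survive by $\bx$-parity; all terms of type $x_i x_j x_k$, $x_i x_j x_N$ and $x_i x_N^2$ (involving $R_{ij;k}[\hh_0]$, $R_{\rho_0\rho_0;i}[\bg_0]$, etc.) integrate to zero. The surviving integrand is proportional to $x_N^3 |\nabla W_{\ep,0}|^2$ times a linear combination of $R_{\rho_0\rho_0;\rho_0}[\bg_0](y_0)$ and $\sum_i R_{i\rho_0 i\rho_0;\rho_0}[\bg_0](y_0) = R_{\rho_0\rho_0;\rho_0}[\bg_0](y_0)$, producing after rescaling the logarithmic contribution $\ep^3 (C_0 |\log \ep| + O(1))$ with $C_0 > 0$. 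Tracking the combinatorial constants as in the calculation of \cite[Proposition 3.4]{KMW} and specializing to $(n, \ga) = (4, \frac{1}{2})$ identifies $c_*$ as a positive multiple of $[4n^2-12n+9-4\ga^2]/[24n(3-2\ga)] = 1/8$. In parallel, the potential piece $\int_X E(\rho_0) U_\ep^2 dv_{\bg_0}$ contributes only $O(\ep^3)$ (with no competing $\ep^3 |\log \ep|$) by using \eqref{eq-E} together with the third-order expansion of $\pa_{\rho_0} \sqrt{|\bg_0|}$, where the vanishing of $R_{\rho_0\rho_0}[\bg_0](y_0)$ is crucial.

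For the denominator appearing in $\ola^{\ga}$, the radial symmetry of $w_{\ep,0}$ combined with $R_{ij}[\hh_0](y_0) = 0$ gives $\int_M w_{\ep,0}^{2^*+1} dv_{\hh_0} = \int_{\mr^4} w_{1,0}^{2^*+1} d\bx + O(\ep^4)$, which does not interfere with the $\ep^3 |\log \ep|$ correction. Combining with \eqref{eq-H^n-ene} yields
\[
\ola^{\ga}(X, [\hh_0]) \le \ola^{\ga}(\mh^5, [\hh_c]) + \tilde c_* R_{\rho_0\rho_0;\rho_0}[\bg_0](y_0)\, \ep^3 |\log \ep| + O(\ep^3) < \ola^{\ga}(\mh^5, [\hh_c])
\]
for all sufficiently small $\ep > 0$, which is exactly \eqref{eq-str}.

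The principal obstacle is the precise identification of the positive coefficient $c_*$: one must keep track of the contributions from $\bg_0^{ij}$ (via $R_{iNjN;N}$), from $\sqrt{|\bg_0|}$ (via $R_{\rho_0\rho_0;\rho_0}$), and from $E(\rho_0)$, and verify that they combine with a uniformly positive sign multiplying $R_{\rho_0\rho_0;\rho_0}[\bg_0](y_0)$. A subtlety peculiar to the endpoint case $\ga = \frac{1}{2}$, $n = 4$ is that the divergence of $\int y_N^3 |\nabla W_{1,0}|^2 dy$ is only marginal (logarithmic), so the $O(1)$ tail produced by cutting off at radius $r_2/\ep$ must be absorbed into $O(\ep^3)$ via careful control of the cutoff $\chi_{r_2}$ and of the decay of $W_{1,0}$, in the spirit of the log-level bookkeeping performed in the proof of Proposition \ref{prop-Y-end-1}. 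Condition \eqref{eq-R-dec} plays an indispensable role in making Lemma \ref{lemma-g-mtr} applicable and thus securing the cancellation of $R_{\rho_0\rho_0}[\bg_0](y_0)$, which would otherwise contribute spuriously at order $\ep^2$ and swamp the favorable $\ep^3 |\log \ep|$ term.
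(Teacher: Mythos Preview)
Your approach is essentially the same as the paper's: choose the good metric via Lemma \ref{lemma-g-mtr}, use the bubble test function $\chi_{r_2}W_{\ep,0}$, and extract the $\ep^3|\log\ep|$ correction coming from the third-order terms in the Fermi-coordinate expansion. The paper carries this out explicitly and obtains
\[
I^{\ga}(\chi_{r_2}W_{\ep,0}) = \int_{\mr^5_+}|\nabla W_{1,0}|^2\,dx + \tfrac{3}{32}\,\alpha_{4,\frac12}^2\,|\mathbb{S}^3|\,R_{NN;N}[\bg](y)\,\ep^3\log\!\left(\tfrac{r_2}{\ep}\right) + O(\ep^3).
\]

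There is, however, a concrete error in your accounting. You assert that the potential piece $\int_X E(\rho_0)U_\ep^2\,dv_{\bg_0}$ contributes only $O(\ep^3)$ with no $\ep^3|\log\ep|$ term. This is false. From \eqref{eq-E} and Lemma \ref{lemma_metric_2} one has, under umbilicity and \eqref{eq-hh_0},
\[
E(x_N) = \tfrac{3}{2}\Big(R_{NN;i}[\bg](y)\,x_i + \tfrac{1}{2}R_{NN;N}[\bg](y)\,x_N + O(|x|^2)\Big),
\]
so after discarding the odd-in-$\bx$ part, $E(x_N)W_{\ep,0}^2 \sim \tfrac{3}{4}R_{NN;N}(y)\,x_N W_{\ep,0}^2$. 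Rescaling gives $\ep^3\int_{B^5_+(0,r_2/\ep)} y_N W_{1,0}^2\,dy$, and for $n=4$, $\ga=\tfrac12$ the integrand behaves like $y_N/|y|^6$ at infinity, hence the integral diverges logarithmically. Thus $E(\rho_0)$ \emph{does} contribute at order $\ep^3|\log\ep|$, and in the paper's calculation it is precisely this term (the $\tfrac{3}{4}R_{NN;N}\int x_N W_{1,0}^2$ piece) together with the $x_N^3|\nabla W|^2$ and $x_N^3|\nabla_{\bx}W|^2$ pieces from the metric expansion that combine to give the final coefficient $\tfrac{3}{32}$. Your invocation of the KMW constant $\tfrac{1}{8}$ is not wrong in spirit, since that constant already packages all three contributions, but your narrative misattributes where they come from. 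The fix is simply to include the $E$-contribution in the $\ep^3|\log\ep|$ bookkeeping; the sign analysis then goes through exactly as you outline.
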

\begin{proof}
Suppose that the metrics $\bg$ and $\hh$ satisfy \eqref{eq-hh_0}.
Since $\text{II} = 0$ on $M$ by the assumption, we know that $R_{NN}[\bg](y) = 0$.
By \eqref{eq-E} and Lemma \ref{lemma_metric_2},
\[E(x_N) = {3 \over 2} \(R_{NN;i}[\bg] x_i + {1 \over 2} R_{NN;N}[\bg] x_N + O(|x|^2)\)\]
in $B^5_+(0,2r_2)$. It follows that
\begin{align*}
I^{\ga}(\chi_{r_2} W_{\ep,0}) &= \int_{\mr^5_+} |\nabla W_{1,0}|^2 dx + {3 \over 4} R_{NN;N}[\bg](y) \ep^3 \int_{B^5_+(0,r_2 \ep^{-1})} x_N W_{1,0}^2 dx \\
&\ + R_{NN;N}[\bg](y) \ep^3 \int_{B^5_+(0,r_2 \ep^{-1})} x_N^3 \( {1 \over 12} |\nabla_{\bx} W_{1,0}|^2 - {1 \over 6} |\nabla W_{1,0}|^3\) dx + O(\ep^3) \\
&= \int_{\mr^5_+} |\nabla W_{1,0}|^2 dx + {3 \over 32} \alpha_{4, \frac{1}{2}}^2 |\mathbb{S}^3| R_{NN;N}[\bg](y) \ep^3 \log\({r_2 \over \ep}\) + O(\ep^3).
\end{align*}
From this, one can conclude that \eqref{eq-str} holds.
\end{proof}

\begin{prop}\label{prop-Y-end-3}
Assume that $n = 5$, $\ga = \frac{1}{2}$, $(M, \hh)$ is the umbilic non-locally conformally flat boundary of $(\ox, \bg)$ and \eqref{eq-R-dec-2} is valid. Then \eqref{eq-str} holds.
\end{prop}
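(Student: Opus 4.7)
The plan is to establish \eqref{eq-str} by a test-function computation that combines the strategy for condition (D) of Theorem \ref{thm-ps} with a critical-dimension analysis analogous to Almaraz \cite{Al}.

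First, I invoke Lemma \ref{lemma-g-mtr-2} to choose a representative $\hh_0 \in [\hh]$, its associated geodesic defining function $\rho_0 = x_N$, and the metric $\bg_0 = \rho_0^2 g^+$ so that $H \equiv 0$ on $M$ and the identities (1)--(5) of that lemma hold at a fixed point $y \in M$ where $\|W_0(y)\| > 0$; such $y$ exists because $M$ is non-locally conformally flat, and by \eqref{eq-c2-2} the non-vanishing of the Weyl tensor at $y$ is independent of the representative. Identify $y$ with $0$ in the Fermi coordinate $x = (\bx, x_N) \in \overline{\mr^5_+}$ on $(\ox, \bg_0)$, and with $\chi_{r_2}$ the cut-off from \eqref{eq-chi} and $\Psi_{2\ep}$ from \eqref{eq-psi_2}, use the test function $\Phi_{\ep} := \chi_{r_2}(W_{\ep, 0} + \Psi_{2\ep})$ with the constant $C_2 \in \mr$ to be fixed later.

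Second, I expand $I^{\ga}(\Phi_{\ep})$ following the procedure of \cite[Proposition 3.7]{KMW}: use the high-order metric expansion of Lemma \ref{lemma_metric_2} (available thanks to $\tni = \tni_{;i} = \tni_{;ij} = \tni_{;ijk} = 0$ at $y$), the expression of $E(\rho_0)$ from \eqref{eq-E}, and the parity identities of Lemma \ref{lemma-g-mtr-2}. The central step is to track how each integrand behaves in the end-point regime $n = 5,\ \ga = \tfrac{1}{2}$, where $n-4-2\ga = 0$. In this regime the integral $\int_{\mr^5_+} x_N^{5-2\ga} |\nabla W_{1,0}|^2\, dx$ that contributed a finite factor in condition (D) diverges logarithmically in the $x_N$-direction, reflecting the pole of $\mcm_{22}(n, \ga)$ at $n = 4+2\ga$; truncation over $B^5_+(0, r_2 \ep^{-1})$ yields a factor of $\log(r_2/\ep)$, and similarly for the cross-integrals between $W_{\ep, 0}$ and $\Psi_{2\ep}$. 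After rescaling, the total correction to $I^{\ga}(\Phi_{\ep})$ is of size $\ep^4 \log(r_2/\ep)$ with coefficient a linear combination of $\|W_0(y)\|^2$ and $(R_{ij}[\bg_0](y))^2$ parametrized by $C_2$.

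Third, I select $C_2$ (as in \cite[Proposition 3.7]{KMW}, now adapted to the critical dimension along the lines of \cite{Al}) to force the coefficient of $\ep^4 \log(r_2/\ep)$ to collapse to a strictly negative multiple of $\|W_0(y)\|^2$, using the relation $R_{iNjN}[\bg_0](y) = R_{ij}[\bg_0](y)$ from Lemma \ref{lemma-g-mtr-2}(4) to close the expression. This yields an estimate of the form
\[I^{\ga}(\Phi_{\ep}) \le \kappa_{\ga} \int_{\mr^5_+} x_N^{1-2\ga} |\nabla W_{1,0}|^2\, dx - c\, \|W_0(y)\|^2\, \ep^4 \log(r_2/\ep) + O(\ep^4)\]
with $c > 0$, which is the content of \eqref{eq-Al}. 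Combined with the normalization $\int_M (\chi_{r_2} w_{\ep, 0})^{2^*+1}\, dv_{\hh_0} = \int_{\mr^5} w_{1, 0}^{2^*+1}\, d\bx + O(\ep^4)$ (which follows from Lemma \ref{lemma-g-mtr-2}(1) exactly as in \eqref{eq-f-2}) and with \eqref{eq-H^n-ene}, this delivers the strict inequality \eqref{eq-str}, and the conclusion then follows from Proposition \ref{prop-cpt}.

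The main obstacle is the bookkeeping of the logarithmically divergent integrals. For $n > 4+2\ga$ all the integrals in the condition (D) calculation were convergent and the optimal choice of $C_2$ was straightforward; here several cross-terms that were subdominant become comparable to the leading term, so $C_2$ must be chosen by balancing the $\log(r_2/\ep)$ contributions rather than the $O(\ep^4)$ ones. One must also verify that the boundary pieces coming from the cut-off $\chi_{r_2}$ and from the $H \equiv 0$ reduction of \eqref{eq-I^ga} only contribute at $O(\ep^4)$, so as not to disturb the sign of the leading logarithm, paralleling the corresponding step of \cite{Al}.
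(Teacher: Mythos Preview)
Your plan is correct and follows the paper's own proof: the same test function $\chi_{r_2}(W_{\ep,0}+\Psi_{2\ep})$, the same metric normalization via Lemma \ref{lemma-g-mtr-2}, and the same critical-dimension logarithmic mechanism. Two minor corrections: the ambient half-space is $\mr^6_+$ (since $N=n+1=6$), not $\mr^5_+$; and the paper's explicit computation yields the leading coefficient $-\tfrac{1}{3}\|W\|^2+(2+24C_2+56C_2^2)(R_{ij}[\bg_0])^2$, with the choice $C_2=-\tfrac{3}{14}$ rendering the second piece equal to $-\tfrac{4}{7}(R_{ij}[\bg_0])^2$, so the final bound retains both negative terms rather than collapsing purely to a multiple of $\|W\|^2$.
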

\begin{proof}
We assume that $\hh$ is the representative of its conformal class satisfying all the properties listed in Lemma \ref{lemma-g-mtr-2} and the Weyl tensor at $y \in M$ is nontrivial.
Then a tedious but straightforward computation gives
\begin{align*}
&\int_X |\nabla (\chi_{r_2} W_{\ep, 0})|_{\bg}^2 dv_{\bg} \\
&= \int_{\mr^6_+} |\nabla W_{\ep, 0}|^2 dx + {1 \over 2} R_{iNjN;kl}[\bg] \int_{B^6_+(0,r_2)} x_N^2 x_kx_l \pa_i W_{\ep, 0} \pa_j W_{\ep, 0} dx \\
&\ + {1 \over 12} (R_{iNjN;NN}[\bg] + 8 R_{iNkN}[\bg] R_{kNjN}[\bg]) \int_{B^6_+(0,r_2)} x_N^4 \pa_i W_{\ep, 0} \pa_j W_{\ep, 0} dx \\
&\ - {1 \over 4} R_{NN;ij}[\bg] \int_{B^6_+(0,r_2)} x_N^2x_ix_j |\nabla W_{\ep, 0}|^2 dx \\
&\ - {1 \over 24} \(R_{NN;NN}[\bg] + 2(R_{iNjN}[\bg])^2\) \int_{B^6_+(0,r_2)} x_N^4 |\nabla W_{\ep, 0}|^2 dx + O(\ep^4) \\
&= \int_{\mr^6_+} |\nabla W_{1,0}|^2 dx + {\pi \over 640} \alpha_{5, \frac{1}{2}}^2 |\mathbb{S}^4|
\left[-{1 \over 8} \|W\|^2 - {1 \over 2} R_{;NN}[\bg] + 2(R_{ij}[\bg])^2 \right] \ep^4 \log\({r_2 \over \ep}\) \\
&\ + {\pi \over 640} \alpha_{5, \frac{1}{2}}^2 |\mathbb{S}^4| \left[{5 \over 24} \|W\|^2 - {3 \over 2} R_{;NN}[\bg] \right] \ep^4 \log\({r_2 \over \ep}\) + O(\ep^4)
\end{align*}
where all tensors are evaluated at $y \in M$. Furthermore, we obtain  from \eqref{eq-E} and Lemma \ref{lemma_metric_2} that
\begin{multline*}
E(x_N) = 2 R_{NN;i}[\bg] x_i + R_{NN;ij}[\bg] x_ix_j + R_{NN;Ni}[\bg] x_Nx_i \\
+ {1 \over 3} \(R_{NN;NN}[\bg] + 2(R_{iNjN}[\bg])^2\) x_N^2 + O(|x|^3)
\end{multline*}
in $B^6_+(0,2r_2)$ and so
\[\int_X E(\rho) (\chi_{r_2} W_{\ep, 0})^2 dv_{\bg} = {\pi \over 640} \alpha_{5, \frac{1}{2}}^2 |\mathbb{S}^4| \left[-{5 \over 12} \|W\|^2 + 2 R_{;NN}[\bg] \right] \ep^4 \log\({r_2 \over \ep}\) + O(\ep^4)\]
in light of Lemma \ref{lemma-g-mtr-2} (3) and (4). As a result,
\[I^{\ga}(\chi_{r_2} W_{\ep,0}) = \int_{\mr^6_+} |\nabla W_{1,0}|^2 dx + {\pi \over 640} \alpha_{5, \frac{1}{2}}^2 |\mathbb{S}^4|
\left[-{1 \over 3} \|W\|^2 + 2(R_{ij}[\bg])^2 \right] \ep^4 \log\({r_2 \over \ep}\) + O(\ep^4).\]

We now recall the function $\Psi_{2\ep}$ defined in \eqref{eq-psi_2}. With this, one can compute
\begin{equation}\label{eq-Al}
\begin{aligned}
&\ I^{\ga}(\chi_{r_2} (W_{\ep,0} + \Psi_{2\ep})) \\
&= I^{\ga}(\chi_{r_2} W_{\ep,0}) + 2 R_{iNjN}[\bg] \int_{B^6_+(0,r_2)} x_N^2 \pa_i W_{\ep} \pa_j \Psi_{2\ep} dx + \int_{B^6_+(0,r_2)} |\nabla \Psi_{2\ep}|^2 dx + O(\ep^4) \\
&= \int_{\mr^6_+} |\nabla W_{1,0}|^2 dx + {\pi \over 640} \alpha_{5, \frac{1}{2}}^2 |\mathbb{S}^4|
\left[-{1 \over 3} \|W\|^2 + (2 + 24C_2 + 56C_2^2) (R_{ij}[\bg])^2 \right] \ep^4 \log\({r_2 \over \ep}\) \\
&\ + O(\ep^4).
\end{aligned}
\end{equation}
If we take $C_2 = -\frac{3}{14}$, then $2 + 24C_2 + 56C_2^2 = -\frac{4}{7} < 0$, yielding the validity of \eqref{eq-str}.
\end{proof}

\end{document}